\documentclass[preprint,12pt,reqno]{elsarticle}
\usepackage[utf8]{inputenc}
\usepackage[T1]{fontenc}
\usepackage[vmargin=30mm]{geometry}
\usepackage{amsmath,amssymb,amsfonts,amsthm,dsfont}
\usepackage{graphicx,verbatim,tikz,wrapfig}
\usepackage{physics}
\usepackage{subcaption}
\usepackage{calc,adjustbox,enumerate}
\usetikzlibrary{calc}

\usepackage{hyperref}
\numberwithin{equation}{section}
\numberwithin{figure}{section}
\theoremstyle{plain}
\newtheorem{theorem}{Theorem}[section]
\newtheorem{lemma}[theorem]{Lemma}
\newtheorem{proposition}[theorem]{Proposition}
\newtheorem{corollary}[theorem]{Corollary}
\theoremstyle{definition}
\newtheorem{definition}[theorem]{Definition}
\newtheorem{remark}[theorem]{Remark}

\newtheorem{example}[theorem]{Example}
\newtheoremstyle{ourexample}
  {1ex plus 1ex}   
  {\topsep}   
  {\normalfont}  
  {0pt}       
  {\bfseries} 
  {.}         
  {5pt plus 1pt minus 1pt} 
  {#1 \thmnote{E#3}} 
\theoremstyle{ourexample}
\newtheorem{ourexample}{Example}
%
\journal{Linear Algebra and its Applications}
\newcommand{\C}{\mathbb{C}} 
\renewcommand{\P}{\mathbb{P}} 
\newcommand{\R}{\mathbb{R}} 
\newcommand{\ii}{\mathrm{i}} 
\newcommand{\M}{\mathrm{M}}  
\newcommand{\cA}{\mathcal{A}} 
\newcommand{\cC}{\mathcal{C}} 
\newcommand{\cD}{\mathcal{D}} 
\newcommand{\cH}{\mathcal{H}} 
\newcommand{\cP}{\mathcal{P}} 
\newcommand{\cU}{\mathcal{U}} 
\newcommand{\cV}{\mathcal{V}} 
\DeclareMathOperator\ext{ext}     
\DeclareMathOperator\diag{diag}   
\DeclareMathOperator\conv{conv}   
\DeclareMathOperator\range{range} 
\DeclareMathOperator\rk{rk}       
\DeclareMathOperator\spn{span}    
\newcommand{\id}{\mathds{1}} 
\newcommand{\scp}[1]{\left\langle#1\right\rangle}

\newcommand{\tp}{{}^\top} 
\newcommand{\JNR}{\tilde{W}}
%
%
%
\title{Joint numerical ranges of three hermitian $4\times 4$ matrices}
\newlength{\strutheight}
\settoheight{\strutheight}{\strut}
\begin{document}

\author[1]{Piotr Pikul}
\affiliation[1]{organization={Jagiellonian University}, city={Krak\'{o}w},
country=Poland}
\ead{piotr.pikul@im.uj.edu.pl}
\author[2]{Ilya Spitkovsky}
\affiliation[2]{organization={New York University Abu Dhabi (NYUAD)}, country={United Arab Emirates}}
\ead{ims2@nyu.edu}
\author[3]{Konrad Szyma{\'n}ski}
\affiliation[3]{organization={Research Center for Quantum Information, Slovensk\'{a} Akad\'{e}mia Vied}, city=Bratislava, country=Slovakia}
\ead{k.sz@quantumstat.es}
\author[4]{Stephan Weis}
\affiliation[4]{organization={Faculty of Electrical Engineering, Czech Technical University in Prague}, country={Czech Republic}}
\ead{stephan.weis@fel.cvut.cz}
\author[1,5]{Karol~{\.Z}yczkowski}
\affiliation[5]{organization={Center for Theoretical Physics,
Polish Academy of Sciences}, city=Warsaw, country=Poland}
\ead{karol@cft.edu.pl}

\begin{keyword}
joint numerical range\sep hermitian matrices\sep convex set\sep face\sep separable numerical range\sep quantum states \MSC[2020] 15A60\sep 52A15 \sep 47L07\sep  52A20\end{keyword}

\begin{abstract}
We analyze the joint numerical range $W$ of three hermitian matrices of order four.
In the generic case, this three-dimensional convex set has a smooth boundary.
We analyze non-generic structures. Fifteen possible classes regarding the numbers of 
non-elliptic faces in the boundary of $W$ are identified and an explicit example is 
presented for each class. Secondly, it is shown that a nonempty intersection of 
three mutually distinct one-dimensional faces is a corner point. Thirdly, introducing 
a tensor product structure into $\C^4=\C^2\otimes\C^2$, one defines the separable 
joint numerical range --- a subset of $W$ useful in studies of quantum entanglement. 
The boundary of the separable numerical range is compared with that of $W$.
\end{abstract}

\maketitle

%
\section{Introduction}
The \emph{joint numerical range} \cite[p.~23]{BonsallDuncan1971} of $A_1,\dots,A_k\in\M_n$, 
with respect to the algebra $\M_n$ of $n\times n$ matrices with complex entries, is the 
compact convex set 
\begin{equation}\label{eq:JNR-BonsallDuncan}\textstyle
W:=W(A_1,\dots, A_k)
:=
\big\{ (\tr \rho A_1, \ldots, \tr \rho A_k)\tp 
\colon 
\rho\in\M_n,\, \rho\geq 0,\, \tr\rho=1 \big\}.
\end{equation}
Here, $\tr X$ is the trace and $X\geq 0$ means  $X\in\M_n$ is positive semidefinite.
Equation \eqref{eq:JNR-BonsallDuncan} is a somewhat less common definition of 
the joint numerical range than
\begin{equation}\label{eq:JNR-standard}
\JNR 
:=\JNR (A_1,\dots,A_k)
:=\{\braket{\varphi}{A_i\varphi}_{i=1}^k
\colon
\ket{\varphi}\in\C^n, \braket{\varphi}=1\},
\end{equation}
where $\braket{\varphi_1}{\varphi_2}:=\overline{x_1}y_1+\cdots+\overline{x_n}y_n$ is the 
inner product of $\ket{\varphi_1}=(x_1,\ldots,x_n)\tp$ and $\ket{\varphi_2}=(y_1,\ldots,y_n)\tp$ 
in $\C^n$. In any case, $W$ is the convex hull of $\JNR $. 
\par
For a single matrix $B\in\M_n$, the set $\JNR (B)$ is also called \emph{numerical range} 
\cite{Toeplitz1918,Hausdorff1919}. Identifying $\C=\R^2$, we have
\begin{equation}\label{eq:NR}\textstyle
\JNR (B)=\JNR (\Re B,\Im B),
\end{equation}
as $B=\Re B+\ii\Im B$, where $\Re B:=\frac{1}{2}(B+B^\ast)$, 
$\Im B:=\frac{1}{2\ii}(B-B^\ast)$, and $B^\ast$ is the hermitian conjugate of $B$.
Similarly, $W,\JNR \subset\C^k$ can be identified with subsets of $\R^{2k}$. Hence, 
unless stated otherwise, we assume $A_1,\dots,A_k$ are hermitian. So, $W,\JNR \subset\R^k$. 
It is well known that $\JNR $ is convex if $k\leq 2$ \cite{Toeplitz1918,Hausdorff1919} or 
if $k=3$ and $n\geq 3$ \cite{Au-YeungPoon1979,MaierNetzer2024}. In particular, the numerical 
range is $W(B)=\JNR (B)$, and $W(A_1,A_2,A_3)=\JNR (A_1,A_2,A_3)$ holds for hermitian
$4\times 4$ matrices $A_1,A_2,A_3$.
\par
The numerical range of an operator is a well-known notion studied in operator theory and 
linear algebra 
\cite{Toeplitz1918,Hausdorff1919,Kippenhahn1951,Fiedler1981,GustafsonRao1997,Gutkin2004,HornJohnson2012}, 
while its generalizations find several applications in theoretical  physics 
\cite{Schulte-Herbrueggen-etal2008,KPL+09,DGH+11,Jevtic2014}.
The joint numerical range is well known in matrix theory
\cite{AdamMaroulas2002,Au-YeungPoon1979,BindingLi1991,BonsallDuncan1971,ChienNakazato2010,
Gutkin-etal2004,LPW20,MuellerTomilov2020,Plaumann-etal2021,Szymanski-etal2018,Weis2011} 
and theoretical physics \cite{GZ13,Xie2020,SCSZ21}.
It is also known as the set of states \cite{Paulsen2003} of a matrix system 
\cite{ChoiEffros1977,Arveson2010}, an algebraic object with numerous applications 
in quantum information theory, see for example 
\cite{Heinosaari-etal2013,Paulsen-etal2016}.
\par
The subset $W$ of $\R^k$ has a clear interpretation in quantum physics:
It is the set of possible expectation values of joint
measurements of $k$ observables $A_1,\dots, A_k$. The case of $k=3$ hermitian matrices of 
size $n=3$ was studied in~\cite{Szymanski-etal2018}, in which all possible shapes of 
$W$ were divided into $10$ classes with respect to the structure of its boundary.
Here, we shall go beyond this case and analyze the problem of three hermitian matrices of size 
$n=4$. This step forward has important consequences for quantum theory: As four is a 
composite number, one can introduce a tensor product structure, $\C^4=\C^2\otimes\C^2$,
which corresponds to a two-qubit system. Such a splitting of a four-dimensional system
into two subsystems allows for the introduction of \emph{product states}, which convey no 
correlations between the two parts, and \emph{separable states}, which encode classical 
correlations -- all the other states are \emph{entangled}, and among them are those whose  correlations can never 
be explained by classical probability theory \cite{Werner1989}. By restricting the range of states in the 
definition of numerical range one arrives at product \cite{PGM+11} and separable numerical 
range \cite{SCSZ21}, which physically correspond to classically achievable sets of 
expectation values. The geometry of bipartite quantum systems is also studied outside the 
algebraic setting of numerical ranges, see for instance 
\cite{HanKye2025,Liss-etal2024,Morelli-etal2024,Sanpera-etal1998}.
\par

This work investigates the joint numerical range $W=W(A_1,A_2,A_3)$ of three 
hermitian matrices $A_1,A_2,A_3$ of size $n=4$. Since $k\leq 3$, the von Neumann-Wigner 
non-crossing theorem implies that $W$ is generically strictly convex and has a smooth
boundary $\partial W$\!, i.e.\ the set of triples for which $W$ is strictly convex and 
has a smooth boundary is open and dense in the set of all triples of (complex) 
hermitian $4\times 4$ matrices with the Euclidean topology, see 
\cite[Proposition~4.9]{Gutkin-etal2004} and \cite[Theorem~4.2]{Szymanski-etal2018}. 
In the analogous sense of being true for triples of real symmetric $4\times 4$ matrices 
in an open and dense set, $W$ has generically an even number of at most ten elliptic 
disks on the boundary and it has no other flat portions on $\partial W$. This follows 
from \cite[Theorem~1.1]{Ottem-etal2015}, see also \cite{HelsoRanestad2021}, by duality 
as described in Remark~\ref{rem:Ottem} below.
\par
In this paper we focus on non-elliptic flat portions on $\partial W$. Any flat portion 
on $\partial W$ corresponds to the numerical range of a $3\times 3$ matrix $B$, see 
Remark~\ref{rem:faces-JNR}~(c),~(d) for details. The classification of these numerical ranges 
was achieved by Kippenhahn \cite{Kippenhahn1951}. See also \cite{Keeler-etal1997,GauWu2021} 
and, for more rigorous proofs, see \cite{ChienNakazato2010,Plaumann-etal2021}. Let
\begin{equation}\label{eq:hyperbolic}
p_B(u_0,u_1,u_2)
:=\det(u_0\id_3+u_1\Re B+u_2\Im B),
\qquad
u_0,u_1,u_2\in\C,
\end{equation}
and let $C(B)$ be the dual curve \cite{Fischer2001} to the curve $p_B=0$ that 
consists of all tangent lines to $p_B=0$. Identifying the complex projective plane 
$\C\P^2$ with its dual space of lines, we define
$C_\R(B):=\{(x_1,x_2)\tp\in\R^2\mid (1:x_1:x_2)\in C(B)\}$. The set $C_\R(B)$ is known 
as the \emph{Kippenhahn curve} \cite{Bebiano-etal2021,GauWu2021} and $\JNR (B)$ is the 
convex hull of $C_\R(B)$. This result divides the shape of $\JNR (B)$ into four different 
classes \cite{GauWu2021}:
\begin{enumerate}
\item 
$C_\R(B)$ consists of three points that are the eigenvalues of $B$, and $\JNR (B)$ is the 
closed triangular region whose vertices are these points.
\item
$C_\R(B)$ consists of an ellipse and a point. Depending on whether the point lies 
inside the ellipse or not, $\JNR (B)$ is an elliptic disk or it has a \emph{droplet shape}.
\item
$C(B)$ is an irreducible curve of degree four that has a double tangent,
$C_\R(B)$ is of heart shape, and the boundary of $\JNR (B)$ contains
a line segment but no corner.
\item
$C(B)$ is an irreducible curve of degree six, $C_\R(B)$ consists of two parts, 
one inside the other. The outer part is an oval (a smooth and strictly convex curve), 
whose convex hull is $\JNR (B)$.
\end{enumerate}
For simplicity (see Lemma~\ref{lem:int2non-elliptic} and Example~\ref{ex:2rank3ellipses})
we focus on flat parts of the boundary $\partial W$
different from elliptic disks. This prompts the following definitions.
\par
A \emph{face} of a convex subset $C$ of $\R^d$ is a convex subset of $C$ that contains the 
endpoints $x,y\in C$ of every open segment $\{(1-\lambda)x+\lambda y:\lambda\in(0,1)\}$ it 
intersects.
An \emph{exposed face} of $C$ is the set of minimizers of any linear functional 
on $C$. For technical reasons, $\emptyset$ is also considered an exposed face. It is well 
known that every exposed face of $C$ is a face of $C$. Faces that are no exposed faces are 
called \emph{nonexposed faces} \cite{Rockafellar1970}.
\par
\begin{definition}[Non-elliptic faces]\label{def:non-elliptic}
Let $A_1,A_2,A_3$ be hermitian $4\times 4$ matrices such that $W=W(A_1,A_2,A_3)$ has 
dimension $\dim(W)=3$. A \emph{non-elliptic face} of $W$ is any two-dimensional face 
of $W$ that is not an elliptic disc. We say a non-elliptic face of $W$ has \emph{type} 
$m\in\{0,1,2,3\}$ if it has $m$ one-dimensional faces.
\end{definition}
\begin{table}[htb]
\centering
\begin{tabular}{c|l|l}
type & shape & Kippenhahn curve \\\hline\hline 
$0$ & oval & degree-6 curve \\\hline
$1$ & loaf & degree-4 curve \\\hline
$2$ & droplet & ellipse and a point outside the ellipse \\\hline 
$3$ & triangle & three affinely independent points \\\hline\hline 
\end{tabular}
\caption{The four types and shapes of non-elliptic faces and their 
Kippenhahn curves.}
\label{tab:types}
\end{table}
\begin{figure}[htb]
\includegraphics[width=.32\textwidth]{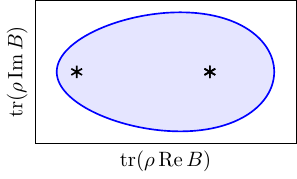}
\includegraphics[width=.32\textwidth]{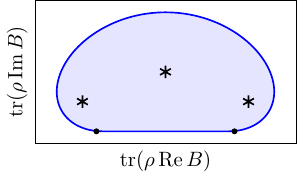}
\includegraphics[width=.32\textwidth]{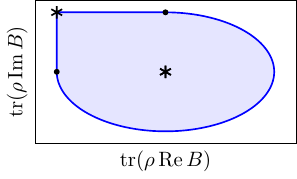}
\caption{Examples of numerical ranges of a $3\times 3$ matrix $B$ in the complex plane
exemplifying the shapes of type 0 (oval), 1 (loaf) and 2 (droplet). 
Black asterisks mark the (in some cases, duplicate) eigenvalues\protect\footnotemark{}
of $B$.
Small black dots mark the nonexposed faces of the numerical ranges --- endpoints of segments 
smoothly connected to the rest of the boundary. The missing numerical range of type 3 is a 
triangle, which is the convex hull of the three eigenvalues of a normal matrix, $BB^*=B^*B$.} 
\label{fig:num3x3}
\end{figure}
\footnotetext{Note that the image scalings are not proportional (aspect ratios differ 
from one). Having a double eigenvalue at its coinciding foci, the ellipse in the third 
drawing depicts a circle.}
Non-elliptic faces being numerical ranges of $3\times 3$ matrices, each type 0, 1, 2, 
and 3 refers to a unique shape in Kippenhahn's classification above, see Table~\ref{tab:types}.
Exemplary matrices for each type read,
\[
\begin{bmatrix}
 -1 & 1 & 1 \\
 0 & \frac 12 & 1 \\
0 & 0 & \frac12
\end{bmatrix},
    \qquad
\frac12 \begin{bmatrix}
1 & 0 & -\ii \\
0 & -1 & -\ii \\
\ii & \ii & 2\ii
\end{bmatrix},
    \qquad
\begin{bmatrix}
1 & 2 & 0 \\
0 & 1 & 0 \\
0 & 0 & \ii
\end{bmatrix},
    \qquad
\begin{bmatrix}
-1 & 0 & 0 \\
0 & 1 & 0 \\
0 & 0 & \ii
\end{bmatrix}.
\]
The numerical ranges for the first three of the above matrices of type $0,1$ and $2$, are depicted
in Figure~\ref{fig:num3x3}. 
The numerical range of the diagonal matrix of type 
$3$ is the convex hull of the eigenvalues: The triangle with vertices at $-1,1$ and $\ii$.
\par
We continue the paper by recalling and proving preliminaries on the algebra and 
geometry of the joint numerical range in Section~\ref{sec:recap}.
Section~\ref{sec:non-elliptic} contains one of the main 
results of the paper, Theorem~\ref{thm:non-elliptic}, which establishes
15 classes that capture all possible numbers of non-elliptic face of
the joint numerical range $W$ of three hermitian $4\times 4$ matrices. 
Section~\ref{sec:examples} presents an example of $W$ for every class. A second 
major result is Theorem~\ref{thm:3segments} of Section~\ref{sec:onedim}, according 
to which a non-empty intersection of three mutually distinct one-dimensional faces 
of $W$ is a corner point of $W$. Section~\ref{sec:elliptic-faces} collects 
observations and examples regarding ellipses on the boundary of $W$ that motivated 
us to initiate this work. In Section~\ref{sec:restricted-JNRs}, we compare the 
structure of the standard joint numerical range $W$ with the separable numerical 
range.
\par
%
%
\section{Algebra and geometry of the joint numerical range}
\label{sec:recap}
Summarizing prior work of one of the authors \cite{Weis2011,Weis2012}, this section 
presents an algebraic description of the faces of the joint numerical range. 
Normal cones are reviewed, too, and properties of corner points are discussed, 
including the assertion proved by Binding and Li \cite{BindingLi1991} that 
every corner point of the joint numerical range is contained in the joint 
spectrum. Bonsall and Duncan's notion of the joint numerical range with respect to 
a *-algebra \cite{BonsallDuncan1971} is convenient for the analysis of faces.
\par
The algebra $\M_n$ of $n\times n$ complex matrices is a unitary space with the 
Frobenius inner product $\scp{A,B}:=\tr(A^\ast B)$, $A,B\in\M_n$. It is a 
*-algebra with the involution $A\mapsto A^\ast$ mapping a matrix $A\in\M_n$ to 
its conjugate transpose. The algebra $\M_n$ is partially ordered by $A\leq B$,
or equivalently $B\geq A$, meaning that $B-A$ is positive semidefinite. Let 
$\id_n$ be the $n\times n$ identity matrix. We define a \emph{*-algebra} on $\C^n$ 
to be a complex linear subspace of $\M_n$ that is closed under matrix multiplication 
and under the involution $A\mapsto A^\ast$.
\par
Let $\cA$ be a *-algebra on $\C^n$, $\cH_\cA:=\{A\in\cA\colon A^\ast=A\}$ be the
real vector space of hermitian matrices, and let $A_1,\ldots,A_k\in\cH_\cA$. Let also
\begin{equation}\label{eq:measuremap}
w=w_{A_1,\dots,A_k}:\cH_\cA\to\R^k,
\qquad
w(A):=(\scp{A,A_1},\dots,\scp{A,A_k})\tp.
\end{equation}
The image of the set of \emph{density matrices}
\begin{equation}
\cD_\cA:=\{\rho\in\cA\colon 
\text{$\rho\geq 0$ and $\tr(\rho)=1$} \}
\label{eq:setofdensmat}
\end{equation}
under $w$ is the \emph{joint numerical range}\footnote{%
To be precise, the joint numerical range is defined in \cite{BonsallDuncan1971} 
in terms of states on the algebra $\cA$, that is to say, normalized positive 
linear functionals. This is equivalent to the aforementioned definition because 
states are represented by density matrices, see the Sections~2.3.2 and~2.4.3 in 
\cite{BratteliRobinson1987}.}
of $A_1,\ldots,A_k$ with respect to the *-algebra $\cA$, which we denote by
\begin{equation}\label{eq:JNR-A}
W_\cA(A_1,\ldots,A_k):=w(\cD_\cA),
\end{equation}
see \cite[Definition~11]{BonsallDuncan1971}. We are mostly interested in the joint numerical 
range 
\begin{equation}\label{eq:JNR}
W_{\M_n}(A_1,\ldots,A_k) 
= 
\{ (\tr \rho A_1, \ldots, \tr \rho A_k)\tp 
\colon 
\rho\in\M_n, \rho\geq 0, \tr\rho=1 \},
\end{equation}
with respect to the full matrix algebra $\M_n$, which we also denote by $W(A_1,\ldots,A_k)$ or 
simply by $W$. The set $W$ is also known as the ``algebraic numerical range'' 
\cite{MuellerTomilov2020}.
\par
It is well known \cite{AlfsenShultz2001} that there is an isomorphism 
between the set of exposed faces of $\cD_\cA$ and the set
$\cP_\cA:=\{P\in\cA\colon P=P^2=P^\ast\}$ of orthogonal projections.
Namely, if $A\in\cA$ is a hermitian matrix, 
then the set of minimizers of the real affine functional $\cD_\cA\to\R$, 
$\rho\mapsto\scp{\rho,A}$ is the set $\cD_{P\cA P}$ of density matrices of 
the *-algebra $P\cA P$, where $P$ is the spectral projection of $A$ in the 
*-algebra $\cA$ corresponding to the smallest spectral value\footnote{%
The use of spectral values (as opposed to eigenvalues) is mandatory, 
because the identity $Q$ of the *-algebra $Q\M_nQ$ differs from $\id_n$ for 
every $n\times n$ orthogonal projection $Q\neq\id_n$. Spectral values were 
introduced into this finite-dimensional setting in the arXiv-version of 
\cite{Weis2011}, see also the erratum to \cite{Weis2011}.}
of $A$ with respect to $\cA$. The map $P\mapsto\cD_{P\cA P}$ is a lattice 
isomorphism from $\cP_\cA$ to the set of exposed faces of $\cD_\cA$. Thereby, 
$\cP_\cA$ is partially ordered by $P\leq Q$ (the range of $P$ is a subset of 
the range of $Q$). The set $\cP_\cA$ is a complete lattice 
in this partial order, which means any subset of $\cP_\cA$ has an infimum and 
a supremum. The set of exposed faces is partially ordered by inclusion and 
the infimum is the intersection. 
\par
A similar isomorphism exists between the faces of the joint numerical range and certain 
orthogonal projections \cite{Weis2011}. However, whereas all faces of $\cD_\cA$ are 
exposed faces, the joint numerical range $W$ can have nonexposed faces. Fig.~\ref{fig:num3x3} 
shows examples: the endpoints of flat portions on the boundary of the numerical range
that are joined by curved boundary portions. Two ideas suffice to describe the 
faces of the joint numerical range. First, as with exposed faces, the set of 
faces of a convex subset $C$ of a Euclidean space, partially ordered by inclusion, 
is a complete lattice, whose infimum is the intersection 
\cite{LoewyTam1986,Weis2012}. Second, we define an \emph{access sequence}\footnote{%
An element of an access sequence with respect to a convex set has been called 
a \emph{poonem} in convex geometry \cite{Gruenbaum2003}. The term \emph{access sequence} 
stems from mathematical statistics \cite{CsiszarMatus2003}.}
\cite{Gruenbaum2003,CsiszarMatus2003}
of faces (of length $\ell$) with respect to $C$ to be a nonincreasing sequence
\begin{equation}\label{eq:access-F}
F_0 \supset F_1 \supset \dots \supset F_\ell
\end{equation}
subject to the conditions that $F_0=C$, that $F_{i+1}$ is an exposed face of 
$F_i$, $i=0,\ldots,\ell-1$, and that $F_\ell\neq\emptyset$. Clearly
\cite[Section~1.2.1]{Weis2012}, a nonempty subset $F$ of $C$ is a face of $C$ if 
and only if $F$ is an element of an access sequence with respect to $C$. This 
follows from a separation argument \cite[Lemma~4.6]{Weis2012} proving that 
every face of $C$ that is strictly included in $C$ is included in an exposed 
face of $C$ that is itself strictly included in $C$.
\par
The set of hermitian matrices, density matrices, and orthogonal projections in $\M_n$ 
is denoted by $\cH_n:=\cH_{\M_n}$, $\cD_n:=\cD_{\M_n}$ and $\cP_n:=\cP_{\M_n}$, 
respectively. Let $A_1,\ldots,A_k\in\cH_n$ denote arbitrary hermitian $n\times n$ 
matrices. The *-algebra 
\[
P\M_nP=\{PAP\colon A\in\M_n\}
\]
is defined in terms of a nonzero orthogonal projection $P\in\cP_n$. An 
\emph{access sequence} of projections (of length $\ell$) with 
respect to $A_1,\dots,A_k$ is a nonincreasing sequence 
\begin{equation}\label{eq:access-P}
P_0\geq P_1\geq\dots\geq P_\ell
\end{equation}
of orthogonal projections in $\cP_n$, subject to the conditions that $P_0=\id_n$ 
and that for every $i=0,\dots,\ell-1$ there exists a tuple of real numbers 
$s_1,\dots,s_k\in\R$ such that $P_{i+1}$ is the spectral projection of 
$P_i(s_1A_1+\dots+s_kA_k)P_i$ in the *-algebra $P_i\M_nP_i$ corresponding 
to the smallest spectral value of $P_i(s_1A_1+\dots+s_kA_k)P_i$ with respect to 
$P_i\M_nP_i$.
\par
It is easy to see that $W$ is affinely isomorphic to an orthogonal projection of 
$\cD_n$. Let $\pi_\cU:\cH_n\to\cH_n$ denote the orthogonal projection onto a 
subspace $\cU$ of $\cH_n$. If $\cU$ is the real linear span of $A_1,\dots,A_k$, then 
$w_{A_1,\dots,A_k}:\cH_n\to\R^k$ restricts to the affinity 
\begin{equation}\label{eq:aff-pi-W}
w_{A_1,\dots,A_k}:\pi_\cU(\cD_n)\to W
\end{equation}
(see \cite[Remark~1.1]{Weis2011} for details). Similarly,
$w_{\id_n,A_1,\dots,A_k}:\cH_n\to\R^{k+1}$ restricts to the affinity 
\begin{equation}\label{eq:aff-pi-1W}
w_{\id_n,A_1,\dots,A_k}:\pi_\cV(\cD_n)\to\{1\}\times W
\end{equation}
if $\cV=\spn\{\id_n,A_1,\dots,A_k\}$, because $\tr\rho=1$ holds for every $\rho\in\cD_n$. 
The following theorem follows from \cite[Theorem~3.7]{Weis2011} 
through a family of affinities of the type \eqref{eq:aff-pi-W}. More details can be found 
in \cite[Section~4.1]{Weis2014}, in particular in Lemma~4.19 therein.
\par
\begin{theorem}\label{thm:PF-iso}
There is a one-to-one correspondence between access sequences of orthogonal 
projections $P_0\geq P_1\geq\dots\geq P_\ell$ with respect to $A_1,\dots,A_k$ 
and access sequences of faces $F_0 \supset F_1 \supset \dots \supset F_\ell$
with respect to $W$. For every $i=0,\ldots,\ell$ we have $F_i=w(\cD_{P_i\M_nP_i})$
and $P_i\in\cP_n$ is the unique orthogonal projection such that 
$\cD_{P_i\M_nP_i}=w|_{\cD_n}^{-1}(F_i)$.
\end{theorem}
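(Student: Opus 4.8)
The plan is an induction on the length $\ell$ of the access sequence, peeling off one step at a time: at each level I realize the current face as a joint numerical range over a truncated $*$-algebra by means of an affinity of the form \eqref{eq:aff-pi-W}, and then invoke \cite[Theorem~3.7]{Weis2011} --- the $*$-algebraic description of the \emph{exposed} faces of a joint numerical range in terms of spectral projections of real linear combinations of its defining hermitian matrices --- for that algebra. For $\ell=0$ there is nothing to prove: $P_0=\id_n$ is the unique admissible initial projection, $F_0=W$ the unique admissible initial face, and $W=w(\cD_n)=w(\cD_{\id_n\M_n\id_n})$ with $w|_{\cD_n}^{-1}(W)=\cD_n=\cD_{\id_n\M_n\id_n}$, where $\id_n$ is clearly the only projection with that last property.

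For the inductive step I would carry along the strengthened hypothesis that a matched pair of initial segments $P_0\geq\cdots\geq P_i$ and $F_0\supset\cdots\supset F_i$ satisfies $F_i=w(\cD_{P_i\M_nP_i})$ together with $w|_{\cD_n}^{-1}(F_i)=\cD_{P_i\M_nP_i}$, and that $P_i$ is the \emph{unique} projection in $\cP_n$ with this last property. The key first observation is that $F_i$ is itself a joint numerical range: since $\scp{\rho,A_j}=\tr(\rho P_iA_jP_i)=\scp{\rho,P_iA_jP_i}$ for every $\rho\in\cD_{P_i\M_nP_i}$, the affinity \eqref{eq:aff-pi-W} applied to the $*$-algebra $\cA_i:=P_i\M_nP_i$ gives
\[
F_i \;=\; w(\cD_{P_i\M_nP_i}) \;=\; W_{\cA_i}(P_iA_1P_i,\dots,P_iA_kP_i).
\]

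Now I would apply \cite[Theorem~3.7]{Weis2011} to $\cA_i$ (after a $*$-isomorphism $\cA_i\cong\M_{\rk P_i}$ if the cited statement is phrased for full matrix algebras): the nonempty exposed faces of $F_i$ are exactly the sets $w(\cD_{Q\cA_iQ})$, where $Q\in\cP_{\cA_i}$ --- equivalently $Q\in\cP_n$ with $Q\leq P_i$ --- is the spectral projection in $\cA_i$ of some $P_i(s_1A_1+\cdots+s_kA_k)P_i$, $s\in\R^k$, for its smallest spectral value with respect to $\cA_i$, and $w|_{\cD_{\cA_i}}^{-1}$ of such a face is $\cD_{Q\cA_iQ}$. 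This is precisely the rule defining the admissible successor $P_{i+1}:=Q$ in \eqref{eq:access-P}. Since $Q\leq P_i$ forces $Q\cA_iQ=Q\M_nQ$ and again $\scp{\rho,A_j}=\scp{\rho,P_iA_jP_i}$ on $\cD_{Q\M_nQ}$, these exposed faces are the sets $w(\cD_{P_{i+1}\M_nP_{i+1}})$; and from $F_{i+1}\subseteq F_i$ together with $w|_{\cD_n}^{-1}(F_i)=\cD_{\cA_i}$ one gets $w|_{\cD_n}^{-1}(F_{i+1})=\cD_{P_{i+1}\M_nP_{i+1}}$. Uniqueness of $P_{i+1}$ in $\cP_n$ holds because any set of the form $\cD_{Q\M_nQ}$ determines $Q$: the union of the ranges of its elements equals $\range Q$, and $\cD_{Q\M_nQ}=\emptyset$ exactly when $Q=0$. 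This re-establishes the hypothesis at level $i+1$; reading it from left to right turns an access sequence of projections into an access sequence of faces via $P_i\mapsto w(\cD_{P_i\M_nP_i})$, and from right to left it turns an access sequence of faces into the access sequence of the unique projections $P_i$ with $\cD_{P_i\M_nP_i}=w|_{\cD_n}^{-1}(F_i)$, the endpoint conditions $F_\ell\neq\emptyset$ and $P_\ell\neq 0$ corresponding to each other.

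It then only remains to observe that these two maps are mutually inverse: one way, $w\big(w|_{\cD_n}^{-1}(F_i)\big)=F_i$ because $w$ sends $\cD_n$ onto $W\supseteq F_i$; the other way, $P_i$ is recovered as the unique projection with $\cD_{P_i\M_nP_i}=w|_{\cD_n}^{-1}\big(w(\cD_{P_i\M_nP_i})\big)$, and the right-hand side equals $\cD_{P_i\M_nP_i}$ by the hypothesis just proved. I expect the main obstacle to be not any single estimate but the bookkeeping that upgrades a step-by-step matching into a bijection of \emph{sequences}: one must verify that the exposed faces of $F_i$, computed intrinsically as subsets of $\R^k$, coincide with the images $w(\cD_{Q\M_nQ})$ produced by the spectral projections of the truncated algebra $\cA_i$ --- i.e. that \eqref{eq:aff-pi-W} transports the face lattice of $W_{\cA_i}(\dots)$ faithfully onto the face lattice of $F_i$ --- and that $w|_{\cD_n}^{-1}(F_i)$ never leaks outside $\cD_{P_i\M_nP_i}$, so that the hypothesis can be reinstated at the next level. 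These are exactly the points worked out in detail in \cite[Section~4.1]{Weis2014}.
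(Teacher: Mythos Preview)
Your proposal is correct and follows essentially the same route the paper indicates: the paper does not give a self-contained proof but states that the theorem follows from \cite[Theorem~3.7]{Weis2011} through a family of affinities of type \eqref{eq:aff-pi-W}, with details in \cite[Section~4.1]{Weis2014}, and your induction on $\ell$ is precisely an unfolding of that sketch. Your bookkeeping observations at the end --- that the affinity \eqref{eq:aff-pi-W} transports the exposed-face lattice faithfully and that the preimage $w|_{\cD_n}^{-1}(F_i)$ stays inside $\cD_{P_i\M_nP_i}$ --- are indeed the only points requiring care, and they are exactly what the paper defers to \cite[Lemma~4.19]{Weis2014}.
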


By Theorem~\ref{thm:PF-iso}, for every nonempty face $F$ of $W$
there is a unique orthogonal projection $P\in\cP_n$ that satisfies $\cD_{P\M_nP}=w|_{\cD_n}^{-1}(F)$.
We call $P$ the \emph{projection associated to} $F$
and refer to $F$ as a \emph{rank-$r$ face} of $W$ if $r=\rk(P)$.
\par
\begin{example}\label{exa:rankF}
Let $k=3$, let $W=W(A_1,A_2,A_3)$ be the joint numerical range of three hermitian 
$4\times 4$ matrices $A_1,A_2,A_3$, and assume $\dim(W)=3$. Let $F_1$ be a 
non-elliptic face of $W$, $F_2$ a one-dimensional face of $F_1$, and $p$ an 
endpoint of the segment $F_2$. After applying to $A_1,A_2,A_3$ the similarity 
$A\mapsto UAU^\ast$ with respect to a suitable unitary $U\in\M_4$, the access 
sequence of projections $\id_4\geq P_1\geq P_2 \geq P_3$ that corresponds to the 
access sequence of faces $W \supset F_1 \supset F_2 \supset \{p\}$ by 
Theorem~\ref{thm:PF-iso} is $P_1=\diag(1,1,1,0)$, $P_2=\diag(1,1,0,0)$, and 
$P_3=\diag(1,0,0,0)$.
\end{example}
We show in Remark~\ref{rem:faces-JNR} that every rank-$r$ face $F$ of $W$ is the 
joint numerical range of $k$ hermitian $r\times r$ matrices and that $F$ is 
isometric to the joint numerical range of $d=\dim(F)$ hermitian $r\times r$ 
matrices. To this end, we employ the leading principal submatrix $A[r]$ of a matrix 
$A\in\M_n$ obtained by deleting the rows and columns $r+1$ through $n$.
\par
Note that $F=w(\cD_{P\M_nP})$ holds for every face $F$ of $W$ and the projection 
$P$ associated to $F$ by Theorem~\ref{thm:PF-iso}. In general, for arbitrary nonzero 
$P\in\cP_n$, the set $w(\cD_{P\M_nP})$ is not a face of $W$ but it is still useful 
in Lemma~\ref{lem:convWs} and Corollary~\ref{cor:convWcorner}. 
\par
\begin{remark}\label{rem:faces-JNR}
Let $P\in\cP_n$ be a projection of rank $r>0$, and let 
$F:=w(\cD_{P\M_nP})$. Then:
\begin{enumerate}[\ (a)]
\item
As per the definition in equation \eqref{eq:JNR-A}, the convex set $F$ is a joint 
numerical range:
\begin{align*}
F 
&=\{\scp{\rho,A_i}_{i=1}^k\colon \rho\in\cD_{P\M_nP}\}
=\{\scp{\rho,PA_iP}_{i=1}^k\colon \rho\in\cD_{P\M_nP}\}\\
&=W_{P\M_nP}(PA_1P,\ldots,PA_kP)
\end{align*}
\item
The matrix size $n$ can be decreased at the expense of a unitary similarity. Let 
$U$ be a unitary $n\times n$ matrix such that $Q:=UPU^\ast$ has the block diagonal 
form $Q=\id_r\oplus 0$. The matrix $PAP$ is unitarily similar to 
$UPAPU^\ast=QU A U^\ast Q=UAU^\ast[r]\oplus 0$ for every $A\in\M_n$. Moreover, the 
trace-preserving *-algebra isomorphism
\[
P\M_nP\to\M_r,
\quad
A\mapsto (UAU^\ast)[r]
\]
restricts to an isometry $\cD_{P\M_nP}\to\cD_{\M_r}$. Hence, 
\begin{align*}
F 
&=\{\scp{\rho,PA_iP}_{i=1}^k\colon \rho\in\cD_{P\M_nP}\}
=\{\scp{U\rho U^\ast[r],UA_iU^\ast[r]}_{i=1}^k\colon \rho\in\cD_{P\M_nP}\}\\
&=\{\scp{\sigma,UA_iU^\ast[r]}_{i=1}^k\colon \sigma\in\cD_{\M_r}\}
=W\bigl((UA_1U^\ast)[r],\ldots,(UA_kU^\ast)[r]\bigr).
\end{align*}
\item
The number $k$ of matrices can be reduced to the dimension $d=\dim(F)$ of $F$ 
at the cost of an affine transformation with coefficients $t_{ij}\in\R$, $i=1,\dots,l$, 
$j=0,\dots,k$,
\begin{equation}\label{eq:affT}\textstyle
T:\R^k\to\R^l,
\quad
(x_1,\ldots,x_k)\tp
\mapsto
(t_{i0}+\sum_{j=1}^kt_{ij}x_j)_{i=1}^l.
\end{equation}
Let
\begin{equation}\label{eq:affA}\textstyle
\hat{A}_i:=t_{i0}\id_n+\sum_{j=1}^kt_{ij}A_j,
\quad 
i=1,\ldots,l.
\end{equation}
Then for every $\rho\in\cD_n$ we have
\begin{equation}\label{eq:Tw=w}
T\circ w_{A_1,\dots,A_k}(\rho)=w_{\hat{A}_1,\dots,\hat{A}_l}(\rho).
\end{equation}
In particular, $T(W)=W(\hat{A}_1,\ldots,\hat{A}_l)$. Let $T:\R^k\to\R^k$ be a rotation 
such that for every $i=d+1,\dots,k$ the number $\langle\rho,\hat{A}_i\rangle=c_i$ is 
constant for all $\rho\in\cD_{P\M_nP}$. Then
\begin{align}\label{eq:affinityTF-Wc}
T(F)
&\textstyle
=\{\langle \rho,\hat{A}_i\rangle_{i=1}^k\colon \rho\in\cD_{P\M_nP}\}
=\{\langle\rho,\hat{A}_i\rangle_{i=1}^d\colon \rho\in\cD_{P\M_nP}\}
\times\{c\}\\\nonumber
&=W\bigl((U\hat{A}_1U^\ast)[r],\ldots,(U\hat{A}_d U^\ast)[r]\bigr)
\times\{c\},
\end{align}
where $c=(c_{d+1},\dots,c_k)\tp$ and $U$ is the unitary from part~(b) 
above. Hence, $F$ is isometric to the joint numerical range 
$W\bigl((U\hat{A}_1U^\ast)[r],\ldots,(U\hat{A}_d U^\ast)[r]\bigr)$.
\item
It often suffices to study $F$ though an affinity rather than an isometry. Let $\cV$ be 
the linear span of the matrices $\id_r,(UA_1U^\ast)[r],\ldots,(UA_kU^\ast)[r]$ introduced 
in part~(b) above and let $B_1,\dots,B_l\in\cH_r$ satisfy
\[
\spn\{\id_r,B_1,\dots,B_l\}=\cV.
\]
Then the joint numerical ranges $F$ and $W(B_1,\dots,B_l)$ are affinely isomorphic. Indeed,  
by part~(b) above we have
$F=W\bigl((UA_1U^\ast)[r],\ldots,(UA_kU^\ast)[r]\bigr)$ and the maps
\begin{align*}
w_{\id_r,(UA_1U^\ast)[r],\ldots,(UA_kU^\ast)[r]}&:
 \pi_\cV(\cD_r)\to \{1\}\times W\bigl((UA_1U^\ast)[r],\ldots,(UA_kU^\ast)[r]\bigr)\\
w_{\id_r,B_1,\dots,B_l}&:\pi_\cV(\cD_r)\to \{1\}\times W(B_1,\dots,B_l)
\end{align*}
are affinities by equation \eqref{eq:aff-pi-1W}. 
\end{enumerate}
\end{remark}
Let $C$ be a convex subset of a Euclidean space $E$. The \emph{normal cone} 
of $C$ at $x\in C$ is the closed convex cone
\[
N_C(x):=\{u\in E\mid \forall y\in C\colon \scp{y-x,u}\geq 0 \}.
\]
We put $N_C(\emptyset):=E$. The \emph{normal cone} of $C$ at a nonempty convex 
subset $G$ of $C$ is 
\[\textstyle
N_C(G):=\bigcap_{x\in G}N_C(x).
\]
Note that $N_C(G)=N_C(x)$ holds for every point $x$ in the relative interior%
\footnote{%
The \emph{relative interior} \cite{Rockafellar1970} of a convex set $C$ is the 
interior of $C$ in the topology of its affine hull.} 
of $G$, see \cite[Definition~4.3]{Weis2012}. The set of 
\emph{normal cones} of $C$ is defined as 
$\{N_C(G):\text{$G\subset C$ is convex }\}$. 
\par
By 
{\cite[Propositions~4.7 and~4.8]{Weis2012} we have the following.
\par
\begin{proposition}\label{pro:normal-cones}
The set of normal cones of a convex subset $C$ of a Euclidean space $E$ is a 
complete lattice with respect to the partial order by inclusion. If $K\subset L\neq E$ 
are normal cones of $C$, then $K$ is a face of $L$. If $C$ is not a singleton, 
then $F\mapsto N_C(F)$ is an antitone lattice isomorphism from the set of 
exposed faces to the set of normal cones of~$C$.
\end{proposition}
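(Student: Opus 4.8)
The plan is to realize the normal cones and the exposed faces of $C$ as the two families of closed sets of a single antitone Galois connection, after which the lattice assertions become formal and one geometric lemma does the real work. For $u\in E$ let $\Phi(u):=\{x\in C:\scp{y-x,u}\ge 0\text{ for all }y\in C\}$ be the face of $C$ exposed in direction $u$ (so $\Phi(0)=C$, and $\Phi(u)$ may be empty), and for $S\subseteq E$ put $\Phi(S):=\bigcap_{u\in S}\Phi(u)$. Straight from the definitions, $N_C(G)=\{u\in E:G\subseteq\Phi(u)\}$ for every convex $G\subseteq C$ (and $N_C(\emptyset)=E$), hence
\[
G\subseteq\Phi(S)\Longleftrightarrow S\subseteq N_C(G),
\]
so $(\Phi,N_C)$ is an antitone Galois connection between the complete lattice of convex subsets of $C$ (ordered by inclusion, meet $=$ intersection) and the power set of $E$. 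Extending $N_C$ to arbitrary $G\subseteq C$ by $N_C(G):=\bigcap_{x\in G}N_C(x)$ one checks $N_C(G)=N_C(\conv G)$, so $\bigcap_\alpha N_C(G_\alpha)=N_C(\conv\bigcup_\alpha G_\alpha)$ is again a normal cone; thus the normal cones of $C$ are closed under arbitrary intersections and contain $E=N_C(\emptyset)$. A family of subsets of $E$ containing $E$ and closed under arbitrary intersections is a complete lattice under inclusion with meet the intersection, proving the first assertion.

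The geometric core is the lemma I would establish next: for every $x_0\in C$, if $u^*$ lies in the relative interior of the closed convex cone $N_C(x_0)$ (take $u^*=0$ if $N_C(x_0)=\{0\}$), then $\Phi(u^*)$ is the smallest exposed face of $C$ containing $x_0$ and $N_C(\Phi(u^*))=N_C(x_0)$. To prove it, note $x_0\in\Phi(u^*)$ since $u^*\in N_C(x_0)$. For arbitrary $x\in\Phi(u^*)$ one has $\scp{x-x_0,u^*}=0$ (both being minimizers of $\scp{\cdot,u^*}$ on $C$); given $v\in N_C(x_0)$, relative interiority of $u^*$ yields $\varepsilon>0$ with $(1+\varepsilon)u^*-\varepsilon v\in N_C(x_0)$, and testing the defining inequality of this element at $y=x$ together with $\scp{x-x_0,u^*}=0$ gives $\scp{x-x_0,v}\le 0$, while $v\in N_C(x_0)$ forces $x_0\in\Phi(v)$ and hence $\scp{x-x_0,v}\ge 0$; so $\scp{x-x_0,v}=0$ for every $v\in N_C(x_0)$. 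Consequently: (i) for any exposed face $\Phi(v)$ with $x_0\in\Phi(v)$ one gets $\scp{x,v}=\scp{x_0,v}=\min_C\scp{\cdot,v}$, so $x\in\Phi(v)$, showing $\Phi(u^*)$ lies in every exposed face through $x_0$ and is therefore the smallest such; and (ii) for $z\in C$, $\scp{z-x,v}=\scp{z-x_0,v}-\scp{x-x_0,v}\ge 0$, so $N_C(x_0)\subseteq N_C(x)$, and intersecting over $x\in\Phi(u^*)\ni x_0$ gives $N_C(\Phi(u^*))=N_C(x_0)$.

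With the lemma in hand, the Galois-closed subsets of $C$ are exactly the exposed faces. Indeed, for an exposed face $F$ one has $F\subseteq\Phi(N_C(F))$ by the Galois connection, and picking $x_0$ in the relative interior of $F$ gives $N_C(F)=N_C(x_0)$ by \cite[Definition~4.3]{Weis2012}, whence $\Phi(N_C(F))=\Phi(N_C(x_0))\subseteq\Phi(u^*)\subseteq F$, the middle inclusion because $u^*\in N_C(x_0)$ and the last because $\Phi(u^*)$ is the smallest exposed face containing $x_0$; the chain closes, so $\Phi(N_C(F))=F$. Conversely, every normal cone $N_C(G)$ equals $N_C(\Phi(u^*))$ for $u^*$ attached by the lemma to a relative interior point of $G$, or equals $E=N_C(\emptyset)$ when $G=\emptyset$, so it is $N_C$ of an exposed face. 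Hence $F\mapsto N_C(F)$ is an inclusion-reversing bijection from exposed faces onto normal cones whose inverse $N\mapsto\Phi(N)$ is also inclusion-reversing; being a dual order isomorphism onto the complete lattice of normal cones, it makes the exposed faces a complete lattice and is an antitone lattice isomorphism. The hypothesis that $C$ is not a singleton enters only here, to ensure $\Phi(E)=\emptyset$ so that the empty exposed face is present and corresponds to the top cone $E$; for a singleton, $\emptyset$ and $C$ share the normal cone $E$ and injectivity fails.

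It remains to show that if $K\subseteq L$ are normal cones with $L\ne E$, then $K$ is a face of $L$. Write $L=N_C(x)$ and $K=N_C(x')$ with $x$, $x'$ relative interior points of the (nonempty, since $K,L\ne E$) exposed faces $\Phi(L)$, $\Phi(K)$ corresponding to $L$, $K$; since $K\subseteq L$ dualizes to $\Phi(L)\subseteq\Phi(K)$, we have $x\in\Phi(K)$, which by the lemma is the smallest exposed face containing $x'$. Let $u_1,u_2\in L$ and $u=(1-s)u_1+su_2\in K$ with $s\in(0,1)$. As $u\in K=N_C(x')$, the exposed face $\Phi(u)$ contains $x'$, hence contains $x$, so $\scp{x-x',u}=0$; and $u_1,u_2\in L=N_C(x)$ with $x'\in C$ give $\scp{x-x',u_i}\le 0$ for $i=1,2$. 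Two non-positive numbers with a positive convex combination equal to $0$ must both vanish, so $\scp{x-x',u_i}=0$; since $x\in\Phi(u_i)$, this gives $x'\in\Phi(u_i)$, i.e.\ $u_i\in N_C(x')=K$. Thus $K$ is a face of the convex cone $L$. I expect the relative-interior perturbation in the lemma — the step forcing $\scp{x-x_0,v}=0$ for all $v\in N_C(x_0)$ — to be the only delicate point; everything else is Galois-connection bookkeeping together with short convexity arguments.
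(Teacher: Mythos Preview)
Your proof is correct and complete. The paper itself does not prove this proposition; it simply cites \cite[Propositions~4.7 and~4.8]{Weis2012} and moves on. You have instead supplied a self-contained argument, organized around the antitone Galois connection $(\Phi,N_C)$ between convex subsets of $C$ and subsets of $E$. The key geometric lemma --- that for $u^*$ in the relative interior of $N_C(x_0)$ the exposed face $\Phi(u^*)$ is the smallest one through $x_0$ and satisfies $N_C(\Phi(u^*))=N_C(x_0)$ --- is proved cleanly via the relative-interior perturbation $(1+\varepsilon)u^*-\varepsilon v\in N_C(x_0)$, and the face assertion for $K\subset L\neq E$ follows by the short convexity argument you give (two non-positive numbers whose proper convex combination vanishes are both zero). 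This is very likely close in spirit to what \cite{Weis2012} does, but your write-up has the advantage of being entirely self-contained within the present paper, relying on nothing beyond the single fact $N_C(G)=N_C(x)$ for $x$ in the relative interior of $G$, which the paper already quotes. The handling of the degenerate cases ($N_C(x_0)=\{0\}$, $C$ a singleton) is also correct.
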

A point $x\in C$ is a \emph{corner point} of $C$ if $N_C(x)$ has full dimension
$\dim(N_C(x))=\dim(E)$. It is well known, see \cite[Proposition~2.5]{BindingLi1991}, 
that every corner point $p=(p_1,\dots,p_k)\tp\in\R^k$ of $W$ is contained in the 
\emph{joint spectrum} 
\[
\{(\lambda_1,\dots,\lambda_k)\tp\in\R^k
\mid \exists \ket{\varphi}\in\C^n\setminus\{0\}, \forall i=1,\dots,k 
\colon A_i\ket{\varphi}=\lambda_i\ket{\varphi}\}
\]
of the hermitian matrices $A_1,\dots,A_k\in\M_n$. This shows that there is 
a nonzero $\ket{\varphi}\in\C^n$ such that 
$A_i\ket{\varphi}=A_i^\ast\ket{\varphi}=p_i\ket{\varphi}$, $i=1,\dots,k$,
which means that $\ket{\varphi}$ is a \emph{normal eigenvector} of $A_i$, 
$i=1,\dots,k$. Hence \cite[p.~123]{HornJohnson2012}, there is a unitary 
$n\times n$ matrix $U$, such that
\[
UA_iU^\ast
=p_i\oplus B_i
=\begin{bmatrix}
p_i & 0\\
0 & B_i
\end{bmatrix},
\quad i=1,\dots,k
\]
for certain hermitian $(n-1)\times(n-1)$ matrices $B_1,\dots,B_k$. We arrive at the same 
conclusion in Proposition~\ref{pro:corner-points}. Let $\C\id_n:=\{z\id_n \mid z\in\C\}$ and
\begin{align*}
\C P\oplus P'\M_nP'
&:=\{z P + P'AP' \mid z\in\C, A\in\M_n\},\\
P\M_nP\oplus P'\M_nP'
& :=\{ PAP + P'AP' \mid A\in\M_n\},
\end{align*}
for every orthogonal projection $P\in\cP_n$, $P\neq0,\id_n$, where 
$P':=\id_n-P$.
\par
\begin{proposition}\label{pro:corner-points}
Let $p$ be a corner point of $W$ and let $P\in\cP_n$ be the projection 
associated to $\{p\}$. If $P=\id_n$, then $A_1,\dots,A_k\in\C\id_n$. 
Otherwise $A_1,\dots,A_k\in\C P\oplus P'\M_nP'$.
\end{proposition}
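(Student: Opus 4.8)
The plan is to exploit the fact that $p$ being a corner point means its normal cone $N_W(p)$ is full-dimensional in $\R^k$, hence has nonempty interior. Pick a direction $s=(s_1,\dots,s_k)\tp$ in the interior of $N_W(p)$. Then $p$ is the \emph{unique} minimizer of the linear functional $\rho\mapsto\langle\rho,A_s\rangle$ on $W$, where $A_s:=\sum_{j=1}^k s_jA_j$; equivalently, $\{p\}$ is an exposed face of $W$. By Theorem~\ref{thm:PF-iso} (and the discussion of the associated projection), the projection $P\in\cP_n$ associated to $\{p\}$ is the spectral projection of $A_s$ for its smallest eigenvalue, call it $\mu$, and $\cD_{P\M_nP}=w|_{\cD_n}^{-1}(\{p\})$. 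In particular $P A_s P = \mu P$, i.e.\ $A_s$ acts as the scalar $\mu$ on the range of $P$.

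Next I would squeeze more out of the fact that $s$ lies in the \emph{interior} of $N_W(p)$. Since $N_W(p)$ is open around $s$, for every $i=1,\dots,k$ the perturbed directions $s\pm\varepsilon e_i$ also lie in $N_W(p)$ for small $\varepsilon>0$, so $p$ is still a (not necessarily unique) minimizer of $\langle\,\cdot\,,A_{s\pm\varepsilon e_i}\rangle$ on $W$. This means $\{p\}$ is contained in the exposed face of $W$ exposed by $A_{s\pm\varepsilon e_i}$; equivalently, the associated projection $P$ is dominated by the spectral projection of $A_{s\pm\varepsilon e_i}$ at its smallest eigenvalue, so $P A_{s\pm\varepsilon e_i}P$ is a scalar multiple of $P$. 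Subtracting, $P A_s P$ and $P A_i P$ are each scalar multiples of $P$ for every $i$. Writing $P A_i P = \lambda_i P$ with $\lambda_i\in\R$, this says precisely that $\range(P)$ is a common eigenspace of $A_1,\dots,A_k$, and because $p=w(\rho)$ for $\rho=\tfrac{1}{\rk P}P\in\cD_{P\M_nP}$ one computes $p_i=\langle\rho,A_i\rangle=\lambda_i$, recovering the containment $p\in\sigma(A_1,\dots,A_k)$ mentioned before the statement.

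From here the structural conclusion is a short computation. If $P=\id_n$ then $A_i=\lambda_i\id_n\in\C\id_n$ for all $i$ and we are done. Otherwise $0\neq P\neq\id_n$; put $P':=\id_n-P$. Since $\range(P)$ is an invariant subspace of the hermitian matrix $A_i$, so is its orthogonal complement $\range(P')$, hence $A_i=PA_iP+P'A_iP'=\lambda_i P + P'A_iP'$, which lies in $\C P\oplus P'\M_nP'$; this is exactly the claimed block form (and, after conjugating by a unitary bringing $P$ to $\id_r\oplus 0$, the $p_i\oplus B_i$ normal form recalled above). I expect the only genuinely delicate point to be the passage using the interior of the normal cone — namely justifying that perturbing $s$ by $\pm\varepsilon e_i$ keeps it in $N_W(p)$ and that this forces $PA_iP\in\C P$; once Proposition~\ref{pro:normal-cones} and Theorem~\ref{thm:PF-iso} are invoked to translate ``smaller exposed face'' into ``larger associated projection'', the rest is linear algebra. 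An alternative route to the same conclusion, avoiding perturbations, is to note that $\{p\}$ being an exposed face with associated projection $P$ forces, via the lattice isomorphism of Proposition~\ref{pro:normal-cones}, that $N_W(p)$ is the image under $s\mapsto (\langle\,\cdot\,,A_i\rangle)$-type maps of the normal cone of $\cD_n$ at $\rho=\tfrac1{\rk P}P$; full-dimensionality of $N_W(p)$ then means $\rho$ is an extreme point whose normal cone in $\cD_n$ surjects onto $\R^k$ under the linear map dual to $(A_1,\dots,A_k)$, which again pins down $PA_iP\in\C P$. Either way the proof closes with the block decomposition above.
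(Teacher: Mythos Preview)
Your perturbation approach is sound and, once a small gap is closed, gives a correct proof that is more elementary and self-contained than the paper's. The paper instead passes to the projection $\cC=\pi_\cV(\cD_n)$ with $\cV=\spn\{\id_n,A_1,\dots,A_k\}$, invokes explicit formulas from \cite{Weis2018} for the normal cone $N_\cC(M)$ at the preimage $M$ of $p$, and reads off $\cV\subset\R\id_n+P'\cH_nP'$ from the full-dimensionality of that cone. Your argument avoids this external input entirely.

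The gap is the implication ``$PA_iP=\lambda_iP\ \Rightarrow\ \range(P)$ is a common eigenspace of the $A_i$''. That fails in general: for $P=\diag(1,0)$ and $A=\left(\begin{smallmatrix}1&1\\1&0\end{smallmatrix}\right)$ one has $PAP=P$, yet $\range(P)$ is not $A$-invariant, so $A\notin\C P\oplus P'\M_2P'$. What you actually obtain is stronger than $PA_{s\pm\varepsilon e_i}P\in\C P$: since $P$ is dominated by the spectral projection $Q$ of $A_{s\pm\varepsilon e_i}$ at its least eigenvalue, one has $\range(P)\subset\range(Q)$, so every vector of $\range(P)$ is a genuine eigenvector of $A_{s\pm\varepsilon e_i}$. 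Subtracting the same statement for $A_s$ yields $A_iv=\lambda_i v$ for all $v\in\range(P)$, hence $P'A_iP=0$, and the block decomposition $A_i=\lambda_iP+P'A_iP'$ follows. With this correction the proof closes exactly as you wrote. (A minor further slip: in your alternative route, $\rho=P/\rk P$ is an extreme point of $\cD_n$ only when $\rk P=1$.)
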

\begin{proof}
We replace $W$ with the projection $\cC:=\pi_\cV(\cD_n)$ of $\cD_n$ onto the span $\cV$ 
of $\id_n,A_1,\dots,A_k$. The map $w_{\id_n,A_1,\dots,A_k}:\cH_n\to\R^{k+1}$ restricts 
to the affinity $w_{\id_n,A_1,\dots,A_k}:\cC\to \{1\}\times W$ by equation 
\eqref{eq:aff-pi-1W}. The matrix $M:=w|_\cC^{-1}(1,p)$ is therefore a corner point of 
$\cC$. Since $M$ is an exposed point of $\cC$, equations
(3.10) and (4.5) in \cite{Weis2018} show that the normal cone of $\cC$ at $M$ in $\cV$ is
\[
N_\cC(M)=\{A\in\cH_n\mid P\leq P_-(A)\}\cap\cV,
\]
where $P_-(A)$ is the spectral projection of $A\in\cH_n$ corresponding to the 
smallest eigenvalue. As shown in \cite{Weis2018}, formula (5.3),
we have
\[
N_\cC(M) 
=\R\id_n+(P'\M_n^+P'\cap\cV),
\]
where $\M_n^+$ denotes the cone of positive semidefinite $n\times n$ matrices.
Since $M$ is a corner point, $\cV$ is included in the linear span of $N_\cC(M)$. 
This shows that $\cV\subset\R\id_n+(P'\cH_nP')$ and proves the claim.
\end{proof}
The next lemma describes the joint numerical range of a reducible tuple of 
hermitian matrices. We skip its straightforward proof.
\begin{lemma}\label{lem:convWs}
Let $P\in\cP_n$, $P\neq0,\id_n$, and let $A_1,\dots,A_k\in P\M_nP\oplus P'\M_nP'$. 
Then
\[
W=\conv[W_{P\M_nP}(PA_1P,\dots,PA_kP)
\cup
W_{P'\M_nP'}(P'A_1P',\dots,P'A_kP')].
\]
If, in addition, $p:=(p_1,\dots,p_k)\tp\in\R^k$, and $A_iP=p_iP$, $i=1,\dots,k$, then 
\[
W=\conv[\{p\}\cup W_{P'\M_nP'}(P'A_1P',\dots,P'A_kP')].
\]
\end{lemma}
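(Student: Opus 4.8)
The plan is a routine block-decomposition argument with respect to the orthogonal direct sum $\C^n=\range(P)\oplus\range(P')$. Write $\rho_{11}:=P\rho P$ and $\rho_{22}:=P'\rho P'$ for a density matrix $\rho\in\cD_n$. Since each $A_i$ lies in $P\M_nP\oplus P'\M_nP'$, it is block diagonal, i.e.\ $A_i=PA_iP+P'A_iP'$, so the off-diagonal blocks of $\rho$ do not contribute to the trace and
\[
\scp{\rho,A_i}=\tr(\rho A_i)=\tr(\rho_{11}PA_iP)+\tr(\rho_{22}P'A_iP').
\]
Setting $t:=\tr(\rho P)=\tr(\rho_{11})\in[0,1]$ we have $\tr(\rho_{22})=1-t$. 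If $0<t<1$, then $\rho_{11}/t\in\cD_{P\M_nP}$ and $\rho_{22}/(1-t)\in\cD_{P'\M_nP'}$, and the displayed identity gives
\[
w(\rho)=t\,w_{A_1,\dots,A_k}\!\big(\tfrac{1}{t}\rho_{11}\big)+(1-t)\,w_{A_1,\dots,A_k}\!\big(\tfrac{1}{1-t}\rho_{22}\big),
\]
which exhibits $w(\rho)$ as a convex combination of a point of $W_{P\M_nP}(PA_1P,\dots,PA_kP)$ and a point of $W_{P'\M_nP'}(P'A_1P',\dots,P'A_kP')$; the boundary cases $t\in\{0,1\}$ put $w(\rho)$ directly into one of these two sets. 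Hence $W\subseteq\conv[\,W_{P\M_nP}(\dots)\cup W_{P'\M_nP'}(\dots)\,]$.

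For the reverse inclusion, note that any $\sigma\in\cD_{P\M_nP}$, regarded as an element of $\M_n$, is still positive semidefinite with $\tr(\sigma)=1$, so $\sigma\in\cD_n$, and $\tr(\sigma A_i)=\tr(\sigma\,PA_iP)$ because $\sigma=P\sigma P$; thus $W_{P\M_nP}(PA_1P,\dots,PA_kP)\subseteq W$, and symmetrically for the $P'$-block. Since $W$ is convex, $\conv[\,W_{P\M_nP}(\dots)\cup W_{P'\M_nP'}(\dots)\,]\subseteq W$, which completes the proof of the first formula.

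For the second formula, assume in addition $A_iP=p_iP$, so $PA_iP=p_iP$. For every $\sigma\in\cD_{P\M_nP}$ we have $\sigma P=\sigma$ and $\tr(\sigma)=1$, hence $\tr(\sigma\,PA_iP)=p_i\tr(\sigma P)=p_i$ for $i=1,\dots,k$. Therefore $W_{P\M_nP}(PA_1P,\dots,PA_kP)=\{p\}$, and substituting this into the first formula yields the claim.

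\textbf{Main obstacle.} There is no genuine difficulty here; the only points requiring a little care are the handling of the degenerate cases $t\in\{0,1\}$ in the convex-combination step, and the observation that in the algebra $P\M_nP$ the projection $P$ is the unit, so trace-normalization of $\sigma\in\cD_{P\M_nP}$ is still taken with respect to the ambient $\tr$ (which is what makes $\tr(\sigma P)=\tr(\sigma)=1$ in the last paragraph). Both are immediate once the block decomposition is written out, consistent with the paper's remark that the proof is straightforward.
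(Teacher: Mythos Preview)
Your proof is correct and is exactly the standard block-decomposition argument the paper has in mind; the paper itself omits the proof, stating only ``We skip its straightforward proof.'' Your handling of the degenerate cases $t\in\{0,1\}$ and of the unit $P$ in $P\M_nP$ is accurate, so there is nothing to add.
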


\begin{corollary}\label{cor:convWcorner}
Assume that $W$ is not a singleton and let $p$ be a corner point of $W$. Then $W$ is 
the convex hull of $p$ and the joint numerical range $W(B_1,\dots,B_k)$ of $k$ hermitian 
$s\times s$ matrices $B_1,\dots,B_k$ for some $s<n$. If $\{p\}$ is a rank-$r$ face, then
$s=n-r$ is a valid choice.
\end{corollary}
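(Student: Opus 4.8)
The plan is to identify the projection associated to the corner point and then chain together Proposition~\ref{pro:corner-points}, the reducible case of Lemma~\ref{lem:convWs}, and the matrix-size reduction of Remark~\ref{rem:faces-JNR}~(b). Let $P\in\cP_n$ be the projection associated to $\{p\}$ and put $r:=\rk(P)$, so $r\geq1$ because $\{p\}\neq\emptyset$. Since $W$ is not a singleton, $P\neq\id_n$: otherwise Proposition~\ref{pro:corner-points} would force $A_1,\dots,A_k\in\C\id_n$, whence $W=\{p\}$. Thus $1\leq r<n$, and $P':=\id_n-P$ has rank $s:=n-r$ with $1\leq s\leq n-1$.

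By Proposition~\ref{pro:corner-points}, $A_i\in\C P\oplus P'\M_nP'$, so we may write $A_i=z_iP+P'A_iP'$; as $A_i=A_i^\ast$ and $P\neq0$, the scalar $z_i$ is real. To see that $z_i=p_i$, recall from Theorem~\ref{thm:PF-iso} that $\cD_{P\M_nP}=w|_{\cD_n}^{-1}(\{p\})$. Hence, for any $\rho\in\cD_{P\M_nP}$ one has $\rho=P\rho P$, so $\scp{\rho,A_i}=z_i\tr(\rho P)=z_i$, while on the other hand $w(\rho)=p$; therefore $(z_1,\dots,z_k)\tp=p$, and consequently $A_iP=p_iP$ for every $i=1,\dots,k$.

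Now the second part of Lemma~\ref{lem:convWs} applies and yields
\[
W=\conv\bigl[\{p\}\cup W_{P'\M_nP'}(P'A_1P',\dots,P'A_kP')\bigr].
\]
To rewrite the remaining joint numerical range over a smaller matrix algebra, apply Remark~\ref{rem:faces-JNR}~(b) to the rank-$s$ projection $P'$: choose a unitary $U\in\M_n$ with $UP'U^\ast=\id_s\oplus0$ and set $B_i:=(UA_iU^\ast)[s]\in\cH_s$ for $i=1,\dots,k$. Then $W_{P'\M_nP'}(P'A_1P',\dots,P'A_kP')=W(B_1,\dots,B_k)$, so $W=\conv(\{p\}\cup W(B_1,\dots,B_k))$ with $B_1,\dots,B_k$ hermitian $s\times s$ matrices and $s=n-r<n$, as claimed.

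I do not anticipate a real obstacle; the statement is essentially a synthesis of results already in place. The only step that requires a little care is the identification $z_i=p_i$, which uses the exact preimage description $\cD_{P\M_nP}=w|_{\cD_n}^{-1}(\{p\})$ from Theorem~\ref{thm:PF-iso} rather than merely the inclusion $p\in w(\cD_{P\M_nP})$; and one must remember to exclude $P=\id_n$ at the outset, which is exactly where the hypothesis that $W$ is not a singleton enters.
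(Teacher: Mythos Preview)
Your proof is correct and follows essentially the same route as the paper: invoke Proposition~\ref{pro:corner-points} to obtain $A_i\in\C P\oplus P'\M_nP'$, apply the second part of Lemma~\ref{lem:convWs}, and finish with the size reduction of Remark~\ref{rem:faces-JNR}. You supply more detail than the paper in two places---explicitly ruling out $P=\id_n$ via the non-singleton hypothesis, and justifying $A_iP=p_iP$ through the preimage characterization of Theorem~\ref{thm:PF-iso}---but the architecture is identical.
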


\begin{proof}
By Proposition~\ref{pro:corner-points}, we have $A_1,\dots,A_k\in\C P\oplus P'\M_nP'$,
where $P\neq 0$ is the projection associated with $\{p\}$. This implies 
$A_iP=p_iP$, $i=1,\dots,k$, where $p=(p_1,\dots,p_k)\tp\in\R^k$. Lemma~\ref{lem:convWs}
then shows that $W$ is the convex hull of $p$ and  $W_{P'\M_nP'}(P'A_1P',\dots,P'A_kP')$.
Remark~\ref{rem:faces-JNR}~(a) and~(b) complete the proof.
\end{proof}
An application of Lemma~\ref{lem:convWs} or Corollary~\ref{cor:convWcorner} 
requires describing the faces of the convex hull of two convex sets.
\par
\begin{lemma}\label{lem:faces-convex-hull}
Let $F$ be a face of the convex hull of two convex subsets $Y_1,Y_2$ 
of a real vector space. Then $F\cap Y_i$ is a face of $Y_i$, $i=1,2$, and 
$F=\conv((F\cap Y_1)\cup(F\cap Y_2))$.
\end{lemma}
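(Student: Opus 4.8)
The plan is to establish the two assertions in turn; write $C:=\conv(Y_1\cup Y_2)$, and note that each $Y_i$ is a convex subset of $C$ since $Y_i\subset Y_1\cup Y_2\subset C$. For the claim that $F\cap Y_i$ is a face of $Y_i$, I would record the general fact that the intersection of a face of a convex set with any convex subset of it is again a face of that subset. Concretely, $F\cap Y_i$ is convex as an intersection of convex sets, and if an open segment $\{(1-\lambda)x+\lambda y:\lambda\in(0,1)\}$ with endpoints $x,y\in Y_i$ meets $F\cap Y_i$, then it is an open segment with endpoints in $C$ that meets the face $F$, so $x,y\in F$ by the defining property of a face, hence $x,y\in F\cap Y_i$. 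Applying this with $Y_1$ and with $Y_2$ yields the first claim.

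For the identity $F=\conv((F\cap Y_1)\cup(F\cap Y_2))$, the inclusion ``$\supseteq$'' is immediate, since $F\cap Y_1$ and $F\cap Y_2$ are subsets of the convex set $F$. For ``$\subseteq$'', I would use the elementary description $C=\bigcup_{t\in[0,1]}\big((1-t)Y_1+tY_2\big)$, valid for nonempty convex $Y_1,Y_2$: the right-hand side visibly lies between $Y_1\cup Y_2$ and $\conv(Y_1\cup Y_2)$, and a short direct computation shows it is convex, so it equals $C$. Given $z\in F$, choose $a\in Y_1$, $b\in Y_2$ and $t\in[0,1]$ with $z=(1-t)a+tb$. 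If $t\in\{0,1\}$ or $a=b$, then $z$ belongs to $Y_1$ or to $Y_2$, hence to $F\cap Y_1$ or to $F\cap Y_2$, and in particular to $\conv((F\cap Y_1)\cup(F\cap Y_2))$. Otherwise $t\in(0,1)$ and $a\neq b$, so $z$ lies on the open segment with endpoints $a,b\in C$, which meets $F$ at $z$; the face property then forces $a,b\in F$, so $a\in F\cap Y_1$, $b\in F\cap Y_2$, and $z=(1-t)a+tb\in\conv((F\cap Y_1)\cup(F\cap Y_2))$.

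I do not expect a genuine obstacle here, as the argument is routine; the only delicate point is the bookkeeping of degenerate configurations, which I would dispatch explicitly. These are: an empty face $F$, for which all assertions are vacuous; the case that $Y_1$ or $Y_2$ is empty, where $C$ reduces to the convex hull of the other set and the lemma collapses to the statement that $F$ is a face of that set together with the trivial identity $F\cap Y_j=F$; and the coincidence $a=b$ in the last step, which must be separated out before invoking the open-segment characterisation of a face, since that characterisation presupposes a nondegenerate segment. With these cases set aside, the substantive content is simply the defining property of a face applied to the representation $z=(1-t)a+tb$.
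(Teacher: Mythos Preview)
Your proof is correct and follows essentially the same approach as the paper's: both verify that $F\cap Y_i$ is a face of $Y_i$ via the general fact about intersecting a face with a convex subset, and both establish the nontrivial inclusion by writing an arbitrary $z\in F$ as a convex combination of points from $Y_1$ and $Y_2$ and invoking the face property on the resulting segment. You are slightly more explicit about degenerate cases (empty $F$, empty $Y_i$, coinciding endpoints), but the substance is identical.
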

\begin{proof}
Let $X:=\conv(Y_1\cup Y_2)$. It is clear that $F\cap Y_i$ is a face of $Y_i$ because 
$F$ is a face of $X$ and $Y_i$ is a convex subset of $X$, $i=1,2$. The inclusion 
$F\supset\conv((F\cap Y_1)\cup(F\cap Y_2))$ is clear since $F$ is convex. Conversely, 
let $x\in F$. Then there exist $y_1\in Y_1$, $y_2\in Y_2$, and $s\in[0,1]$ such 
that $x=(1-s)y_1 + sy_2$. If $x=y_i$ for an $i=1,2$, then $x\in F\cap Y_i$ holds.
Otherwise $s\in(0,1)$ and $y_1,y_2\in F$ follows because $F$ is a face of $X$.
\end{proof}
We recall the well-known relationship between Bonsall and Duncan's and the standard
definition of the joint numerical range \eqref{eq:JNR-BonsallDuncan} and 
\eqref{eq:JNR-standard}, respectively, in the setup of this section. Note 
that $\braket{\varphi}{A\varphi}=\tr(\ketbra{\varphi}A)=\scp{\ketbra{\varphi},A}$
holds for every $A\in\M_n$ and $\ket{\varphi}\in\C^n$.
We denote the set of extreme points of a convex set $C$ by $\ext C$. 
Let $\range(A):=\{A\ket{\varphi}\colon \ket{\varphi}\in\C^n\}$ denote the \emph{range} 
of $A\in\M_n$.
\par
\begin{lemma}\label{lem:pureJNR}
Let $P\in\cP_n$ be a nonzero projection and let $F:=w(\cD_{P\M_nP})$. 
Then 
\[
F=\conv[w(\ext\cD_{P\M_nP})].
\]
If $\dim(F)\leq 2$ or if $\dim(F)=3$ and $\rk(P)\geq 3$, then 
$F=w(\ext\cD_{P\M_nP})$.
\end{lemma}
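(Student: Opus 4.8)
The plan is to prove the three assertions of Lemma~\ref{lem:pureJNR} in sequence, each building on the previous one. First, recall that $\cD_{P\M_nP}$ is a compact convex set whose extreme points are exactly the rank-one projections $\ketbra{\varphi}$ with $\ket{\varphi}\in\range(P)$ of unit norm; this follows from the corresponding classical fact for $\cD_r$ via the *-algebra isomorphism $P\M_nP\to\M_r$ of Remark~\ref{rem:faces-JNR}~(b). By the Krein--Milman theorem (or, in finite dimensions, Minkowski's theorem), $\cD_{P\M_nP}=\conv(\ext\cD_{P\M_nP})$, and since $w=w_{A_1,\dots,A_k}$ is affine and linear maps commute with taking convex hulls, $F=w(\cD_{P\M_nP})=\conv[w(\ext\cD_{P\M_nP})]$. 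This gives the first displayed identity with no real obstacle.

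For the second assertion I would argue that every extreme point of $F$ lies in $w(\ext\cD_{P\M_nP})$, and then observe that when $F$ has small dimension every point of $F$ that fails to be extreme is a genuine convex combination forcing its preimage components to be extreme. More precisely: if $x\in F$ is an extreme point of $F$, then $w|_{\cD_{P\M_nP}}^{-1}(x)$ is an exposed face of $\cD_{P\M_nP}$ (pull back a supporting functional), hence a set of the form $\cD_{Q\M_nQ}$ for some nonzero projection $Q\le P$; picking any unit vector in $\range(Q)$ shows $x\in w(\ext\cD_{P\M_nP})$. To upgrade this to \emph{every} point of $F$, I would use Carathéodory's theorem: any $x\in F$ is a convex combination of at most $\dim(F)+1$ points of $w(\ext\cD_{P\M_nP})$. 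When $\dim(F)\le 2$ this is at most three rank-one density matrices in $P\M_nP$; when $\dim(F)=3$ and $\rk(P)\ge 3$ it is at most four. The key point is that a convex combination of at most $r$ rank-one projections in $P\M_nP$ has rank at most $r$, hence is again a density matrix of some $Q\M_nQ$ with $\rk(Q)\le r\le\rk(P)$, and in the extreme situation one can choose it to be rank one; more carefully, since $\dim$ of $F$ is small, the fibre over $x$ has positive dimension unless $x$ is extreme in $F$, and a positive-dimensional face of $\cD_{P\M_nP}$ always contains a rank-one projection in its (nonempty) set of extreme points. Either way one concludes $x\in w(\ext\cD_{P\M_nP})$, giving $F=w(\ext\cD_{P\M_nP})$.

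The main obstacle is the bookkeeping in the second assertion: one must be careful that a point $x\in F$ which is \emph{not} extreme in $F$ still has a rank-one preimage, and this is exactly where the hypotheses $\dim(F)\le 2$, or $\dim(F)=3$ together with $\rk(P)\ge3$, enter. The clean way is to note that $w|_{\cD_{P\M_nP}}^{-1}(x)$ is a face of $\cD_{P\M_nP}$ of dimension at least $\dim(\cD_{P\M_nP})-\dim(F)$; when $\rk(P)=r$ one has $\dim(\cD_{P\M_nP})=r^2-1$, so this fibre is nonempty and, being a face of $\cD_{P\M_nP}$, it is itself of the form $\cD_{Q\M_nQ}$ and contains a rank-one projection. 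One then checks that $r^2-1-\dim(F)\ge 0$ holds in each listed case ($r=2,\dim(F)\le2$ gives $3-\dim(F)\ge1$; $r\ge3,\dim(F)\le3$ gives $r^2-1-3\ge5$), which is the only place the numerical hypotheses are used.

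Finally, for the special case $W(A_1,A_2)=\widetilde W(A_1,A_2)$, take $P=\id_n$, so $\cD_{P\M_nP}=\cD_n$ and $\ext\cD_n$ is the set of rank-one projections $\ketbra{\varphi}$, $\braket{\varphi}=1$. Then $w_{A_1,A_2}(\ext\cD_n)=\{(\braket{\varphi}{A_1\varphi},\braket{\varphi}{A_2\varphi})\tp\colon\braket{\varphi}=1\}=\widetilde W(A_1,A_2)$ using the identity $\braket{\varphi}{A\varphi}=\scp{\ketbra{\varphi},A}$ noted just before the lemma. Since $W(A_1,A_2)=w_{A_1,A_2}(\cD_n)$ has dimension at most $2$, the second assertion (with $\dim(F)\le2$) yields $W(A_1,A_2)=w_{A_1,A_2}(\ext\cD_n)=\widetilde W(A_1,A_2)$, which completes the proof.
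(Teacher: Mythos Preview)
Your argument for the first assertion is correct and matches the paper's. The third assertion is correctly deduced from the second. The problem lies in the second assertion.

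The central claim in your ``clean way'' paragraph is that for an arbitrary point $x\in F$ the fibre $w|_{\cD_{P\M_nP}}^{-1}(x)$ is a face of $\cD_{P\M_nP}$. This is false unless $\{x\}$ is itself a face of $F$. The general principle you are implicitly using --- that preimages of faces under an affine map are faces --- applies only when $\{x\}$ is a face of $F$, i.e.\ when $x$ is an extreme point. For a non-extreme $x$ the fibre is merely an affine slice of $\cD_{P\M_nP}$, and affine slices of $\cD_r$ are \emph{not} of the form $\cD_{Q\M_nQ}$: for instance, with $P=\id_3$, $A_1=\diag(1,0,0)$, $A_2=\diag(0,1,0)$ and $x=(\tfrac13,\tfrac13)$, the fibre consists of all density matrices with diagonal $(\tfrac13,\tfrac13,\tfrac13)$ and contains the maximally mixed state $\tfrac13\id_3$ in the interior of $\cD_3$. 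Your dimension bound $r^2-1-\dim(F)\geq 0$ is correct for the fibre as a set, but it does not by itself guarantee that the slice meets the rank-one stratum; extreme points of the slice need only lie on $\partial\cD_r$, i.e.\ have rank at most $r-1$, not rank one. The earlier Carath\'eodory attempt has the same defect: writing $x$ as a convex combination of $\dim(F)+1$ pure states produces a preimage of rank at most $\dim(F)+1$, not rank one.

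What you are trying to prove --- that every affine slice of $\cD_r$ of codimension $d$ contains a rank-one matrix when $d\leq 2$, or when $d=3$ and $r\geq 3$ --- is precisely the convexity of $\widetilde W(B_1,\ldots,B_d)$, which is the Toeplitz--Hausdorff theorem for $d=2$ and the Au-Yeung--Poon theorem for $d=3$, $r\geq 3$. These are genuine results that do not follow from dimension counting or the face lattice of $\cD_r$. The paper's proof accordingly reduces $F$ via Remark~\ref{rem:faces-JNR}~(c) to a joint numerical range $W(B_1,\ldots,B_d)$ of $d$ hermitian $r\times r$ matrices and then invokes these theorems directly (citing \cite{Au-YeungPoon1979,MaierNetzer2024}). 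You should do the same: the second assertion is essentially a restatement of those classical convexity results, not something provable by soft face-theoretic arguments.
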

\begin{proof}
As a preliminary step, we recall that
\[
\ext\cD_{P\M_nP}=\{\ketbra{\varphi}\colon\ket{\varphi}\in\range(P), \braket{\varphi}=1\}.
\]
The extreme points of $\cD_n$ are parametrized by the map
\[
\{\ket{\varphi}\in\C^n\mid\braket{\varphi}=1\}
\to\ext\cD_n,
\quad
\ket{\varphi}\mapsto\ketbra{\varphi},
\]
see, e.g., \cite[Section~5.1]{BengtssonZyczkowski2017}. Clearly, a hermitian matrix 
$A\in\cH_n$ is contained in $P\M_nP$ if and only if $\range(A)\subset\range(P)$. Since 
$\cD_{P\M_nP}$ is a face of $\cD_n$, a point in $\cD_{P\M_nP}$ is an extreme point of 
$\cD_{P\M_nP}$ if and only if it is an extreme point of $\cD_n$.
\par
Regarding the first assertion, observe that $w(\cD_{P\M_nP})$ is a linear image of the 
compact convex set $\cD_{P\M_nP}$. The claim is true because preimages of faces of $F$ 
under $w|_{\cD_{P\M_nP}}$ are faces of $\cD_{P\M_nP}$ and because every compact convex 
set is the convex hull of its extreme points (a statement known as the Minkowski-Steinitz 
theorem \cite{Schneider2014}). 
\par
Let $d=\dim(F)$ and $r=\rk(P)$. There are a rotation $T:\R^k\to\R^k$, hermitian 
$B_1,\dots,B_d\in\cH_r$, and $c\in\R^{k-d}$ such that 
$F=T^{-1}[W(B_1,\ldots,B_d)\times\{c\}]$, as described in equation 
\eqref{eq:affinityTF-Wc}. Similarly, 
$w(\ext\cD_{P\M_nP})=T^{-1}[\JNR(B_1,\ldots,B_d)\times\{c\}]$.
It is well known \cite{Au-YeungPoon1979,MaierNetzer2024} that $\JNR (B_1,\ldots,B_d)$ 
is convex in case $d\leq 2$ and in case $d=3$ and $r\geq 3$. 
This proves the second assertion.
\end{proof}
The assertion $W=\conv\JNR$ connecting the two definitions 
\eqref{eq:JNR-BonsallDuncan} and \eqref{eq:JNR-standard} of the joint numerical 
range is included in Lemma~\ref{lem:pureJNR} for $P=\id_n$.
\par
%
%
\section{Numbers of non-elliptic faces}
\label{sec:non-elliptic}
We describe all possible numbers that occur for the different types 
of non-elliptic faces of the joint numerical range $W:=W(A_1,A_2,A_3)$ of 
three hermitian $4\times 4$ matrices $A_1,A_2,A_3$. We assume $\dim(W)=3$.
\par
Observe that every non-elliptic face of $W$ is a rank-3 face
because rank-2 faces are linear images of a three-dimensional Euclidean ball.
\par
\begin{lemma}\label{lem:int2non-elliptic}
If a rank-3 face of $W$ differs from a non-elliptic face of $W$, then 
both faces have dimension two. Their intersection is a one-dimensional face 
of each of them and an exposed face of $W$.
\end{lemma}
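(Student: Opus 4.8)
The plan is to transport the problem to the projection picture of Theorem~\ref{thm:PF-iso} and then, for the statement about $F\cap G$, to Kippenhahn's classification of numerical ranges of $3\times 3$ matrices.

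\emph{Setup.} Write $G$ for the non-elliptic face and $F$ for the rank-$3$ face, with $F\ne G$. By Definition~\ref{def:non-elliptic} we have $\dim(G)=2$, and by the observation preceding the lemma $G$ is a rank-$3$ face; let $P_F,P_G\in\cP_4$ be the projections associated with $F$ and $G$, both of rank $3$. I would first record that a rank-$3$ face of $W$ is automatically an exposed face: in any access sequence of projections $\id_4=P_0\geq P_1\geq\dots\geq P_\ell$ realizing it, the corresponding faces $W=F_0\supsetneq F_1\supsetneq\dots\supsetneq F_\ell$ are strictly decreasing, hence so are the ranks $4=\rk(P_0)>\rk(P_1)>\dots>\rk(P_\ell)=3$; this forces $\ell=1$, so $F$ (and likewise $G$) is an exposed face of $W$. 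Also $\dim(F)\leq 2$, because a face of full dimension $3$ would be $W$ itself, whose associated projection has rank $4$, not $3$.

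\emph{The intersection.} Since $F\ne G$, the uniqueness in Theorem~\ref{thm:PF-iso} gives $P_F\ne P_G$, so $\range(P_F)$ and $\range(P_G)$ are distinct three-dimensional subspaces of $\C^4$ and $\range(P_F)\cap\range(P_G)$ is two-dimensional; let $Q$ be the orthogonal projection onto it, $\rk(Q)=2$. A density matrix lies in both $\cD_{P_F\M_4P_F}=w|_{\cD_4}^{-1}(F)$ and $\cD_{P_G\M_4P_G}=w|_{\cD_4}^{-1}(G)$ exactly when its range is contained in $\range(Q)$, so $w|_{\cD_4}^{-1}(F\cap G)=\cD_{Q\M_4Q}\neq\emptyset$; hence $F\cap G=w(\cD_{Q\M_4Q})$ is a nonempty rank-$2$ face of $W$. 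It is exposed in $W$, being the intersection of the two exposed faces $F$ and $G$ (if $F,G$ are the minimizer sets of linear functionals $\ell_F,\ell_G$, then $F\cap G$ is the minimizer set of $\ell_F+\ell_G$), and it is a face of $F$ and of $G$ as it is a face of $W$ contained in each.

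\emph{Dimensions.} As a face of the two-dimensional $G$, $F\cap G$ has dimension at most $2$; dimension $2$ would give $F\cap G=G\subseteq F$, hence $F=G$ since $\dim(F)\leq 2$, a contradiction, so $\dim(F\cap G)\leq 1$. The crux is to exclude $\dim(F\cap G)=0$. Suppose $F\cap G=\{p\}$. Then $\{p\}$ is a rank-$2$ face of $G$, its associated projection inside the algebra $P_G\M_4P_G\cong\M_3$ being $Q$, of rank $2$. By Remark~\ref{rem:faces-JNR}, $G$ is isometric to the numerical range $\widehat W(N)$ of a $3\times 3$ matrix $N$ via an isometry carrying $\{p\}$ to a rank-$2$ single-point face; let $\theta$ expose this face. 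Then $\Re(e^{-\ii\theta}N)$ has smallest eigenvalue $\lambda$ of multiplicity exactly $2$ (multiplicity $3$ would confine $\widehat W(N)$ to a line), the exposed face equals $e^{\ii\theta}$ times the numerical range of the $2\times 2$ compression of $e^{-\ii\theta}N$ to the eigenspace $V$ of $\lambda$, and its being a point forces this compression to be a scalar $c\,\id_2$. Passing to an eigenbasis of $\Re(e^{-\ii\theta}N)$ and performing a unitary rotation of $V$, one brings $e^{-\ii\theta}N$ to the form $N_2\oplus(c)$ with $N_2\in\M_2$, where $c$ is the $(1,1)$-entry of $N_2$ and therefore $c\in\widehat W(N_2)$; thus $\widehat W(N)=e^{\ii\theta}\widehat W(N_2)$ is an elliptic disc (or, degenerately, a segment), contradicting that $G$ is a non-elliptic two-dimensional face. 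Hence $\dim(F\cap G)=1$, and $F\cap G$ is a one-dimensional face of both $F$ and $G$. Finally, $1=\dim(F\cap G)\leq\dim(F)\leq 2$; if $\dim(F)=1$ the one-dimensional face $F\cap G$ of the segment $F$ would equal $F$, making $F$ the rank-$2$ face $F\cap G$, contradicting that $F$ is a rank-$3$ face. Therefore $\dim(F)=\dim(G)=2$.

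\emph{Main obstacle.} The delicate point is excluding a zero-dimensional intersection: a rank-$2$ face of $W$ can in principle be a single point, so this has to be ruled out directly, and the mechanism is the rigidity computation above showing that a single-point rank-$2$ face of a $3\times 3$ numerical range forces that numerical range to collapse to an elliptic disc, which is incompatible with the non-ellipticity of $G$.
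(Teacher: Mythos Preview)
Your proof is correct and follows the same approach as the paper's: pass to the associated projections, obtain a rank-$2$ intersection projection $Q$, and rule out the singleton case by showing it would force the non-elliptic face to be an ellipse. The paper delegates this last step to \cite[Lemma~4.5]{Szymanski-etal2018} (cf.\ Remark~\ref{rem:bordered}), whereas you reprove it inline via the reduction $\widehat W(N)=\widehat W(N_2)$ to a $2\times 2$ block; you are also more explicit than the paper about exposedness of rank-$3$ faces and about why $\dim(F)=2$.
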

\begin{proof}
Let $F$ denote a non-elliptic face and $G$ a rank-3 face. There exist orthogonal projections 
$P,Q\in\cP_4$ such that $\cD_{P\M_4P}=w|_{\cD_4}^{-1}(F)$ and $\cD_{Q\M_4Q}=w|_{\cD_4}^{-1}(G)$
by Theorem~\ref{thm:PF-iso}. Since $P$ and $Q$ have rank three, the intersection of their 
ranges is a two-dimensional subspace of $\C^4$. Let $R\in\cP_4$ denote the orthogonal 
projection onto this subspace. Then the preimage of $S:=F\cap G$ is the three-dimensional ball
$\cD_{R\M_4R}=w|_{\cD_4}^{-1}(S)$. Therefore, $S$ cannot be a singleton, as this would lead to 
the contradiction that $F$ is an ellipse by \cite[Lemma 4.5]{Szymanski-etal2018}. This shows 
that $S$ is a segment. As $F$ and $G$ are exposed faces of $W$, so is $S=F\cap G$.
\end{proof}

More details on the use of \cite[Lemma 4.5]{Szymanski-etal2018} are explained 
in the following remark.
\par
\begin{remark}[The ellipticity criterion for bordered matrices]\label{rem:bordered}
Let $R\leq P$ be the projections from the proof of Lemma~\ref{lem:int2non-elliptic}, where 
$\rk(P)=3$ and $\rk(R)=2$. Then the joint numerical range 
$F=w_{A_1,A_2,A_3}(\cD_{P\M_4P})$ is affinely isomorphic to $W(B_1,B_2)$ for some hermitian 
$3\times 3$ matrices $B_1,B_2$ by Remark~\ref{rem:faces-JNR}~(d).
We can choose $B_1,B_2$ such that the linear span of $\id_3,B_1,B_2$ is that of 
$\id_3,(UA_1U^\ast)[3],(UA_2U^\ast)[3],(UA_3U^\ast)[3]$, where $U\in\M_4$ is a unitary 
that diagonalizes $P$ to $UPU^\ast=\id_3\oplus 0$.
Let $R'\in\cP_3$ be such that $URU^\ast=R'\oplus 0$.
If the image of the three-dimensional ball $\cD_{R\M_4R}$ under 
$w_{A_1,A_2,A_3}:\cH_4\to\R^3$ is an exposed point of $F$, then the image of 
$\cD_{R'\M_3R'}$ under $w_{B_1,B_2}:\cH_3\to\R^2$ is an exposed point of $W(B_1,B_2)$.
Hence, \cite[Lemma 4.5]{Szymanski-etal2018} proves that up to a unitary similarity of 
$B_1$ and $B_2$ and an affinity acting on $W(B_1,B_2)$ as in Remark~\ref{rem:faces-JNR}~(c), 
one has
\[
B_1=\diag(1,0,0)
\quad\text{and}\quad
B_2=\begin{bmatrix} 0 & 0 & 1\\0 & 0 & 0\\1 & 0 & 0\end{bmatrix}.
\]
These are examples of \emph{bordered matrices}: Every matrix entry
that lies outside the first column and row vanishes.
By \cite[Proposition~3.2]{AdamMaroulas2002} the joint numerical range of any $k$ linearly 
independent bordered hermitian $n\times n$ matrices is a $k$-dimensional 
ellipsoid in $\R^k$.
Another test that corroborates that $W(B_1,B_2)$ is an ellipse is described 
in \cite[Section~4]{BrownSpitkovsky2004}.
\end{remark}
Lemma~\ref{lem:int2non-elliptic} restricts the possible shapes of rank-3 faces of $W$
significantly. Rank-3 faces that are ellipses, segments, or singletons can only exist 
if $W$ has no non-elliptic faces. Otherwise, if $W$ has a non-elliptic face, then any 
other rank-3 face is also a non-elliptic face. 
\par
Lemma~\ref{lem:int2non-elliptic} is no longer true if we modify its statement by 
replacing `non-elliptic' with `rank-3', as it is witnessed by the following example.
\par
\begin{example}\label{ex:2rank3ellipses}
The joint numerical range of the hermitian $3\times 3$ matrices 
$\widehat{A}_1,\widehat{A}_2,\widehat{A}_3$ from \cite[Example~6.3]{Szymanski-etal2018} has 
two ellipses on its boundary that intersect at the point $(-1,0,0)\tp$.
We have $W(A_1,A_2,A_3)=W(\widehat{A}_1,\widehat{A}_2,\widehat{A}_3)$ 
if we extend $\widehat{A}_1,\widehat{A}_2,\widehat{A}_3$ to the hermitian $4\times 4$ block diagonal 
matrices $A_1:=\widehat{A}_1\oplus(-1)$, $A_2:=\widehat{A}_2\oplus0$, $A_3:=\widehat{A}_3\oplus0$. 
Then the two ellipses are rank-3 faces of $W(A_1,A_2,A_3)$ and they intersect at a single point.
\end{example}
Theorem~\ref{thm:non-elliptic} follows essentially from Lemma~\ref{lem:int2non-elliptic}. 
Just one case needs to be excluded in Lemma~\ref{lem:not0003}. This is prepared by the 
following lemma, which is also used in the proof of Lemma~\ref{lem:3nonexp-segments}.
\par
\begin{lemma}\label{lem:int3non-elliptic}
The intersection of any three mutually distinct non-elliptic faces of 
$W$ is a singleton which is a corner point of $W$. In particular, the three faces have 
a point in common.
\end{lemma}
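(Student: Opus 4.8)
The plan is to iterate the rank-counting argument that underlies Lemma~\ref{lem:int2non-elliptic}. Let $F_1,F_2,F_3$ be three mutually distinct non-elliptic faces of $W$. By the observation preceding Lemma~\ref{lem:int2non-elliptic}, each $F_j$ is a rank-$3$ face, so by Theorem~\ref{thm:PF-iso} there are orthogonal projections $P_1,P_2,P_3\in\cP_4$ of rank three with $\cD_{P_j\M_4P_j}=w|_{\cD_4}^{-1}(F_j)$. First I would record that $F_1\cap F_2$, $F_1\cap F_3$, and $F_2\cap F_3$ are each one-dimensional exposed faces of $W$ by Lemma~\ref{lem:int2non-elliptic}, and that the associated projections are the orthogonal projections onto the pairwise intersections $\range(P_i)\cap\range(P_j)$, which are two-dimensional subspaces of $\C^4$ (three-dimensional subspaces of a four-dimensional space meet in dimension at least two, and exactly two since the faces are distinct).

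Next I would analyze the triple intersection $S:=F_1\cap F_2\cap F_3$. Since the lattice of faces of $W$ has intersection as its infimum, $S$ is a face of $W$, and by Theorem~\ref{thm:PF-iso} its associated projection $R$ is the orthogonal projection onto $\cU:=\range(P_1)\cap\range(P_2)\cap\range(P_3)$, with $\cD_{R\M_4R}=w|_{\cD_4}^{-1}(S)$. The key dimension count: three two-dimensional subspaces $\range(P_i)\cap\range(P_j)$ of $\C^4$, each contained in the same three-dimensional space is not forced, so instead I count directly in $\C^4$. We have $\dim(\range(P_1)\cap\range(P_2))=2$, and intersecting with $\range(P_3)$ (dimension three) inside $\C^4$ gives $\dim\cU\geq 2+3-4=1$. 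So $\cU\neq\{0\}$, whence $R\neq 0$ and $S\neq\emptyset$; this already gives the "in particular" clause. It remains to show $\dim\cU=1$, i.e. $\rk(R)=1$, for then $S=w(\cD_{R\M_4R})$ is a single point (a rank-$1$ face), and being an intersection of exposed faces it is an exposed point; a point whose associated projection has rank one is a corner point by Corollary~\ref{cor:convWcorner} together with Proposition~\ref{pro:corner-points}, or more directly: a rank-$1$ face is a zero-dimensional face, and one then invokes that $W$ is three-dimensional to see it is genuinely a corner.

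The crux is therefore to rule out $\dim\cU=2$. Suppose for contradiction $\cU=\range(R)$ is two-dimensional. Then $R\leq P_j$ for each $j$, and $R$ is also the projection associated to each of the three distinct one-dimensional faces $F_i\cap F_j$ (since $\range(P_i)\cap\range(P_j)\supseteq\cU$ and both have dimension two, they coincide), forcing $F_1\cap F_2=F_1\cap F_3=F_2\cap F_3=S$ — a common segment $S$ lying in all three non-elliptic faces. I would then apply the bordered-matrix analysis of Remark~\ref{rem:bordered} to the face $F_1$: passing to a unitarily similar model, $F_1$ is affinely a numerical range $W(B_1,B_2)$ of $3\times3$ matrices with $S$ corresponding to a one-dimensional face of it. The point is that inside $F_1$, which is a non-elliptic face of one of the four Kippenhahn types, a segment $S$ on its boundary is itself a face of $F_1$, and the rank-$2$ face $\cD_{R\M_4R}$ of $\cD_4$ restricted to the three-dimensional subspace structure forces the two endpoints of $S$ to be corner points of $F_1$ that are simultaneously corner points of $F_2$ and $F_3$. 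But an endpoint $p$ of $S$ then has three distinct one-dimensional faces of $W$ through it — one edge from each $F_j\setminus S$ — contradicting the following Theorem~\ref{thm:3segments} unless $p$ is a corner of $W$, and if $p$ is a corner of $W$ then by Corollary~\ref{cor:convWcorner} the matrices are reducible with $p$ split off, which makes $F_1$ (containing $p$ in its relative boundary) too small to be a genuine two-dimensional non-elliptic face.

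I expect the last step — excluding $\dim\cU=2$ cleanly without circularly invoking Theorem~\ref{thm:3segments} — to be the main obstacle; the cleanest route is probably to argue entirely at the level of projections: if $\rk(R)=2$ with $R\leq P_1,P_2,P_3$ and the $P_j$ pairwise distinct of rank three, then in $\C^4=\range(R)\oplus\range(R)^\perp$ (the orthocomplement being two-dimensional) each $P_j$ is $R$ plus a rank-one projection onto a line $\ell_j\subset\range(R)^\perp$, and the three lines $\ell_1,\ell_2,\ell_3$ are distinct in a two-dimensional space. The face $F_j=w(\cD_{P_j\M_4P_j})$ is then, by Lemma~\ref{lem:convWs} applied with the splitting $P_j=R+(\text{rank-one})$, NOT directly decomposable — but one shows via Remark~\ref{rem:faces-JNR}(d) that $F_j$ being two-dimensional and non-elliptic is incompatible with its associated algebra $P_j\M_4P_j$ containing the two-dimensional "inner" algebra $R\M_4R$ together with only one extra dimension, because that configuration produces a droplet or ellipse with the segment $S$ as its unique flat part, and three such faces sharing $S$ would share their "elliptic parts' tangent data along $S$, forcing $F_1=F_2=F_3$ after all. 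Once $\dim\cU=1$ is established, the remaining assertions are immediate from the lattice isomorphism of Theorem~\ref{thm:PF-iso} and Corollary~\ref{cor:convWcorner}.
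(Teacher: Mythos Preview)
Your dimension count for non-emptiness ($\dim\cU\geq 2+3-4=1$) is correct and is actually a cleaner route to the ``in particular'' clause than the paper's. But the rest of the argument has two genuine gaps.

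First, you never exclude $\dim\cU=2$. You acknowledge this yourself, and your attempted workarounds do not succeed: invoking Theorem~\ref{thm:3segments} is circular (that theorem uses the present lemma in its proof), and the final sketch (``three such faces sharing $S$ would share their elliptic parts' tangent data along $S$, forcing $F_1=F_2=F_3$'') is not an argument. The paper settles this with a one-line normal-cone count: if the segment $S_2=F_1\cap F_2$ were contained in $F_3$, then $N_W(S_2)$ would contain the three distinct rays $N_W(F_1),N_W(F_2),N_W(F_3)$ as \emph{faces} (Proposition~\ref{pro:normal-cones}), hence $\dim N_W(S_2)=3$, contradicting that $S_2$ is a segment. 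No projection bookkeeping is needed.

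Second, your deduction that $p$ is a corner point is wrong. A rank-$1$ face is an extreme point, nothing more; Corollary~\ref{cor:convWcorner} and Proposition~\ref{pro:corner-points} run the other way (they assume $p$ is a corner and draw conclusions), and ``$W$ is three-dimensional'' does not upgrade an extreme point to a corner. The paper instead builds the chain of exposed faces $F_1\supsetneq S_2\supsetneq\{p\}$ and applies the antitone lattice isomorphism of Proposition~\ref{pro:normal-cones}: the strict inclusions $N_W(F_1)\subsetneq N_W(S_2)\subsetneq N_W(p)$, each a face of the next, force $\dim N_W(p)=3$. This normal-cone chain is the missing idea in both halves of your argument; once you have it, the projections $P_j$ and the space $\cU$ are not needed at all.
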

\begin{proof}
Let $F_1,F_2,F_3$ be three mutually distinct non-elliptic faces of $W$.
Lemma~\ref{lem:int2non-elliptic} shows that $S_2:=F_1\cap F_2$ and $S_3:=F_1\cap F_3$ are 
one-dimensional exposed faces of $W$.
\par
The intersection $F_1\cap F_2\cap F_3$ is empty or a singleton. 
Indeed, if $S_2$ were a face of $F_3$, then by Proposition~\ref{pro:normal-cones}
the normal cone of $S_2$ would have three mutually distinct faces $N_W(F_i)$, $i=1,2,3$ 
that are strictly included in $N_W(S_2)$ and differ from $\{0\}$. This would imply 
$\dim N_W(S_2)=3$, contradicting the fact that $S_2$ is a segment. 
Therefore $F_1\cap F_2\cap F_3=S_2\cap F_3$ is a face of $S_2$ 
that is strictly included in $S_2$:
a singleton or the empty set.
\par
Now we show that $F_1\cap F_2\cap F_3\neq\emptyset$.
By what is shown in the preceding paragraph, the segments $S_2$ and $S_3$
are different. Hence, the non-elliptic face $F_1$, of which $S_2$ and
$S_3$ are faces, is either a droplet or a triangle.
In either case, $S_2$ and $S_3$ have a point in common.
\par
Let $p$ be the unique point in $F_1\cap F_2\cap F_3$. The chain of exposed 
faces $F_1\supset S_2\supset\{p\}$ and Proposition~\ref{pro:normal-cones} provide
the strict inclusions of normal cones
\[
N_W(F_1)\subset N_W(S_2)\subset N_W(p).
\]
Since the ray $N_W(F_1)$ is a face of $N_W(S_2)$, which is a face of 
$N_W(p)$, it follows that the dimension of $N_W(p)$ is three.
\end{proof}
\begin{lemma}\label{lem:not0003}
If $W$ has three mutually distinct triangular faces, 
then $W$ is a tetrahedron.
\end{lemma}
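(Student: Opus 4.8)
\emph{The plan} is to extract four distinguished extreme points of $W$ from the three triangular faces, to reduce the description of $W$ to the joint numerical range of three hermitian $3\times 3$ matrices, and then to show that those three matrices are simultaneously diagonalizable, whence $W$ is the convex hull of four points and, being three-dimensional, a tetrahedron.

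First I would pin down the combinatorics. Let $F_1,F_2,F_3$ be the three triangular faces; each is a non-elliptic face of $W$. By Lemma~\ref{lem:int3non-elliptic} there is a corner point $p$ with $F_1\cap F_2\cap F_3=\{p\}$, and by Lemma~\ref{lem:int2non-elliptic} each $S_{ij}:=F_i\cap F_j$ is a one-dimensional face of $W$ and of both $F_i,F_j$; moreover $p\in S_{ij}$. Since $\{p\}$ is a zero-dimensional face of the triangle $F_1$, the point $p$ is a vertex of $F_1$, so $S_{12}$ and $S_{13}$ are the two edges of $F_1$ through $p$, and they are distinct (otherwise $F_1\cap F_2\cap F_3$ would be one-dimensional). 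Writing $S_{12}=\conv\{p,a\}$, $S_{13}=\conv\{p,b\}$, $S_{23}=\conv\{p,c\}$ and matching endpoints across the three triangles yields $F_1=\conv\{p,a,b\}$, $F_2=\conv\{p,a,c\}$, $F_3=\conv\{p,b,c\}$ with $p,a,b,c$ pairwise distinct. In particular the opposite edges $E_1:=\conv\{a,b\}$, $E_2:=\conv\{a,c\}$, $E_3:=\conv\{b,c\}$ are one-dimensional faces of $W$, none of which contains $p$.

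Next I would use that $p$ is a corner point. Let $P\in\cP_4$ be the projection associated to $\{p\}$. Proposition~\ref{pro:corner-points} gives $A_iP=p_iP$ for $i=1,2,3$. If $\rk(P)\ge2$, then Corollary~\ref{cor:convWcorner} presents $W$ as the convex hull of $p$ with the joint numerical range of three hermitian matrices of size $4-\rk(P)\le2$, i.e.\ with an ellipsoid or a point; applying Lemma~\ref{lem:faces-convex-hull} to such a convex hull, one checks that it is either at most two-dimensional or has no triangular face, contradicting our hypotheses. Hence $\rk(P)=1$, so $P=\ketbra{v}$ for a common eigenvector $v$ of $A_1,A_2,A_3$, and Lemma~\ref{lem:convWs} together with Remark~\ref{rem:faces-JNR} gives $W=\conv(\{p\}\cup G)$, where $G=W(B_1,B_2,B_3)$ is the joint numerical range of the three hermitian $3\times3$ compressions $B_i$ of $A_i$ onto $v^{\perp}$. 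Applying Lemma~\ref{lem:faces-convex-hull} to $E_i$ as a face of $\conv(\{p\}\cup G)$ and using $p\notin E_i$ shows $E_i\subset G$ and that $E_i$ is a one-dimensional face of $G$; in particular $\conv\{a,b,c\}\subset G$.

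Finally I would analyse $G$. Each $E_i$ is a one-dimensional face of the $3\times3$ joint numerical range $G$, so by Theorem~\ref{thm:PF-iso} it has an associated rank-two projection $Q_i\in\cP_3$, and $E_i=W_{Q_i\M_3Q_i}(Q_iB_1Q_i,Q_iB_2Q_i,Q_iB_3Q_i)$. One-dimensionality of this joint numerical range of three $2\times2$ matrices forces their traceless parts to be pairwise proportional, hence the compressions commute and are simultaneously diagonalized by an orthonormal basis of $\range(Q_i)$ whose two elements realize the two endpoints of $E_i$. Tracking the incidences $\{a\}\subset E_1\cap E_2$, $\{b\}\subset E_1\cap E_3$, $\{c\}\subset E_2\cap E_3$ through the order-preserving face--projection correspondence, one obtains unit vectors $u_a,u_b,u_c$ realizing $a,b,c$ with $\range(\ketbra{u_a})\subset\range(Q_1)\cap\range(Q_2)$, and similarly for $u_b,u_c$; the above diagonalizations then yield $u_a\perp u_b$, $u_a\perp u_c$, $u_b\perp u_c$, so $\{u_a,u_b,u_c\}$ is an orthonormal basis of $\C^3$ in which every off-diagonal entry of each $B_i$ vanishes. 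Hence $B_1,B_2,B_3$ are simultaneously diagonal, $G=\conv\{a,b,c\}$, and $W=\conv(\{p\}\cup G)=\conv\{p,a,b,c\}$, a tetrahedron since $\dim(W)=3$. The main obstacle is precisely this last step: turning the combinatorial input ``three one-dimensional faces forming a triangle inside a $3\times3$ joint numerical range'' into the algebraic conclusion that the compressions vanish off the diagonal, with the delicate part being the bookkeeping of which $Q_i$ contains which of $u_a,u_b,u_c$ and the verification that these vectors are genuinely pairwise orthogonal and span $\C^3$; by comparison, excluding $\rk(P)\ge2$ and setting up the reduction $W=\conv(\{p\}\cup G)$ is routine.
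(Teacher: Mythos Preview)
Your argument is correct and follows the same overall arc as the paper's proof: both use Lemmas~\ref{lem:int2non-elliptic} and~\ref{lem:int3non-elliptic} to extract a corner point $p$, invoke Corollary~\ref{cor:convWcorner} to write $W=\conv(\{p\}\cup G)$ with $G$ a joint numerical range of three hermitian $3\times 3$ matrices, and then use Lemma~\ref{lem:faces-convex-hull} to show that the three edges opposite $p$ are one-dimensional faces of $G$.

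The difference lies in the final step. The paper simply quotes the classification of joint numerical ranges of three hermitian $3\times 3$ matrices from~\cite{Szymanski-etal2018}: the only such range with three or more one-dimensional faces is a triangle. You instead give a self-contained algebraic argument, reading off rank-two projections $Q_i$ for the edges $E_i$, extracting rank-one projections $\ketbra{u_a},\ketbra{u_b},\ketbra{u_c}$ from the pairwise intersections $\range(Q_i)\cap\range(Q_j)$, and deducing pairwise orthogonality of $u_a,u_b,u_c$ from the simultaneous diagonalizability of the $2\times 2$ compressions. Your bookkeeping is sound: the key point, which you handle correctly, is that the preimage $w|_{\cD_3}^{-1}(\{a\})$ equals $\cD_{R_a\M_3R_a}$ with $\rk(R_a)=1$ (forced by $R_a\le Q_1$, $R_a\le Q_2$, $Q_1\neq Q_2$), so the diagonalizing basis vectors in $\range(Q_i)$ really are the $u$'s. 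Your route is longer but avoids the external citation; the paper's is shorter but less self-contained. Your explicit exclusion of $\rk(P)\ge 2$ is also a small addition: the paper absorbs this case silently, since the cited $3\times 3$ classification already covers degenerate (lower-rank) situations.
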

\begin{proof}
Lemma~\ref{lem:int2non-elliptic} and Lemma~\ref{lem:int3non-elliptic} show that 
the pairwise intersections of the triangles are three mutually distinct 
segments. The following figure depicts the triangles when one of them, the 
triangle $bde$, is glued to the other ones along shared sides.
\vskip.5\baselineskip
\hspace*{\fill}
\begin{tikzpicture}
 \coordinate (1) at (-2,0); 
 \coordinate (2) at (0,0); 
 \coordinate (3) at (2,0); 
 \coordinate (4) at (1,1.732); 
 \coordinate (5) at (-1,1.732);     
 \path[draw,thick] 
   (1) node[left] {$a$}
   -- (2) node[below] {$b$}
   -- (3) node[right] {$c$};
 \path[draw,thick] 
   (2)
   -- (4) node[right] {$d$}
   -- (5) node[left] {$e$} 
   -- cycle;
 \draw[thick] (1) -- (5);
 \draw[thick] (3) -- (4);
\end{tikzpicture}
\hspace*{\fill}

\noindent For the three common edges of the triangles intersect at
a point, it is evident that the faces $abe$ and $bcd$ intersect along $[b,a]=[b,c]$ and $a=c$.
\par
The point $b$ is a corner point of $W$ by Lemma~\ref{lem:int3non-elliptic}.
Hence, Corollary~\ref{cor:convWcorner} shows that $W$ is the convex hull 
$W=\conv(\{b\}\cup W')$ of $b$ and the joint numerical range $W'$ of three 
hermitian $3\times 3$ matrices. The triangle $abe$ is a face of $W$ and 
equals the convex hull 
\[
abe=\conv(\{b\}\cup F)
\]
of the corner point $b$ and a face $F$ of $W'$ by 
Lemma~\ref{lem:faces-convex-hull}. The inclusion $W'\subset abe$ is wrong 
because it implies $W=abe$ is a triangle. Hence, $F$ is strictly included 
in $W'$, which means that $F$ is a rank-1 or rank-2 face of $W'$, that 
is to say, a singleton, a segment, or a filled ellipse. Of these three 
choices, only the segment $F=[a,e]$ can fulfill $abe=\conv(\{b\}\cup F)$. 
\par
Similarly, all three segments $[a,e]$, $[e,d]$, and $[d,c]=[d,a]$ are faces 
of $W'$. There is only one joint numerical range $W'$ of three hermitian 
$3\times 3$ matrices with three or more one-dimensional faces,
see 
the introduction of \cite{Szymanski-etal2018}: the triangle.
Hence, $W$ is a tetrahedron.
\end{proof}

Having the above lemmas, we are ready to prove the main classification result
of the paper.
%
\begin{theorem}\label{thm:non-elliptic}
The numbers of non-elliptic faces of the joint numerical range
$W$ belong to one of the following 15 columns.
\[\begin{array}{c|c||c|c|c|c|c|c|c|c|c|c|c|c|c|c|c}
\text{type} & \text{shape} & \multicolumn{15}{c}{\text{count}} \\
\hline\hline 0 & \text{oval}
 & 0 & 1 & 0 & 0 & 0 & 0 & 0 & 0 & 0 & 0 & 0 & 0 & 0 & 0 & 0 \\
\hline 1 & \text{loaf}
 & 0 & 0 & 1 & 2 & 1 & 1 & 0 & 0 & 0 & 0 & 0 & 0 & 0 & 0 & 0 \\
\hline 2 & \text{droplet}
 & 0 & 0 & 0 & 0 & 1 & 0 & 1 & 2 & 3 & 1 & 1 & 2 & 0 & 0 & 0 \\
\hline 3 & \text{triangle}
 & 0 & 0 & 0 & 0 & 0 & 1 & 0 & 0 & 0 & 1 & 2 & 1 & 1 & 2 & 4 \\
\hline\hline\multicolumn{2}{r||}{\text{example no.}}&
0 & 1 & 2 & 3 & 4 & 5 & 6 & 7 & 8 & 9 & 10 & 11 & 12 & 13 & 14
\\\hline\end{array}\]
All 15 tuples are attained by respective examples presented in Section~\ref{sec:examples} 
below. Every tuple described by the columns 8--11 or 13--14 implies the existence 
of a corner point of $W$. 
\end{theorem}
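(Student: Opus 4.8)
The plan is to reduce everything to the incidence structure of the rank-$3$ faces of $W$, which Lemmas~\ref{lem:int2non-elliptic} and~\ref{lem:int3non-elliptic} pin down completely. Recall that every non-elliptic face is a rank-$3$ face, and that by Lemma~\ref{lem:int2non-elliptic} any two distinct non-elliptic faces $F_i\ne F_j$ meet in a one-dimensional face of each of $F_i,F_j$. Moreover, among any three mutually distinct non-elliptic faces $F_1,F_2,F_3$ the segments $F_1\cap F_2$ and $F_1\cap F_3$ are distinct: if they coincided, the common segment would lie in $F_2\cap F_3$, hence equal it (both being one-dimensional faces of $W$), so $F_1\cap F_2\cap F_3$ would be a segment, contradicting Lemma~\ref{lem:int3non-elliptic}. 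Consequently, if $W$ has $N$ non-elliptic faces in total, then every non-elliptic face $F$ contains the $N-1$ pairwise distinct one-dimensional faces $F\cap F'$ with $F'\ne F$ non-elliptic; since $F$ has at most three one-dimensional faces, this forces $N\le 4$, and the type of every non-elliptic face is at least $N-1$.

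These two facts give the table by a short enumeration. For $N=0$ only $(0,0,0,0)$ occurs. For $N=1$ the single face has any type, giving $(1,0,0,0)$, $(0,1,0,0)$, $(0,0,1,0)$, $(0,0,0,1)$. For $N=2$ each face has type $\ge1$, so the unordered pair of types is one of $\{1,1\},\{1,2\},\{1,3\},\{2,2\},\{2,3\},\{3,3\}$, producing $(0,2,0,0)$, $(0,1,1,0)$, $(0,1,0,1)$, $(0,0,2,0)$, $(0,0,1,1)$, $(0,0,0,2)$. For $N=3$ each face has type $\ge2$, so the multiset of types is $\{2,2,2\}$, $\{2,2,3\}$, $\{2,3,3\}$ or $\{3,3,3\}$; the last would give three mutually distinct triangular faces, whence $W$ is a tetrahedron by Lemma~\ref{lem:not0003} and so has four triangular faces, contradicting $N=3$, so $N=3$ contributes only $(0,0,3,0)$, $(0,0,2,1)$, $(0,0,1,2)$. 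Finally $N=4$ forces every face to have type $3$, i.e.\ the tuple $(0,0,0,4)$. This is precisely the list of fifteen tuples, and that each is attained is deferred to the examples of Section~\ref{sec:examples}.

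For the corner-point claims, whenever $W$ has at least three non-elliptic faces — the tuples $(0,0,3,0)$, $(0,0,1,2)$, $(0,0,2,1)$ and $(0,0,0,4)$ — Lemma~\ref{lem:int3non-elliptic} directly produces a corner point. For the two remaining flagged tuples, $(0,0,1,1)$ and $(0,0,0,2)$, there are exactly two non-elliptic faces $F_1,F_2$ and $S:=F_1\cap F_2$ is a one-dimensional face of each by Lemma~\ref{lem:int2non-elliptic}. In a triangle both endpoints of a side are vertices, and in a droplet the unique vertex is the common endpoint of its two edges; hence in both tuples one endpoint $q$ of $S$ is a vertex of $F_1$ and of $F_2$ (for $(0,0,0,2)$ take any endpoint; for $(0,0,1,1)$ take the endpoint of $S$ that is the droplet's tip, which is then automatically a vertex of the triangular face). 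Then $q$ lies on the three one-dimensional faces $S$, the other edge of $F_1$ at $q$, and the other edge of $F_2$ at $q$, which are pairwise distinct since any two coinciding ones would lie in $F_1\cap F_2=S$ and therefore equal $S$. Invoking Theorem~\ref{thm:3segments}, whose proof does not use the present theorem, $q$ is a corner point of $W$.

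The main obstacle is this last step: passing from the combinatorial statement ``three distinct one-dimensional faces meet at $q$'' to ``$N_W(q)$ is three-dimensional''. If one prefers not to forward-reference Theorem~\ref{thm:3segments}, one can argue directly with normal cones: since $S$ is a one-dimensional exposed face that is a proper face of the two-dimensional face $F_1$, Proposition~\ref{pro:normal-cones} makes $N_W(S)$ a two-dimensional face of $N_W(q)$; and because $q$ is a vertex, not a relative interior point, of the flat face $F_1$, a small tilt of the supporting hyperplane of $W$ at $F_1$ toward $q$ yields an additional outward normal at $q$ lying outside the plane spanned by $N_W(S)$, so $\dim N_W(q)=3$. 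The delicate point here is the compactness argument showing that perturbing the outward normal of $F_1$ by a small multiple of an interior normal of $F_1$ at the vertex $q$ stays inside $N_W(q)$ — this uses that $W$ is compact and that $F_1$ is exactly the intersection of $W$ with its supporting hyperplane. Everything else — the bound $N\le 4$, the inequality $\mathrm{type}(F)\ge N-1$, the enumeration, and the exclusion of $(0,0,0,3)$ — follows directly from Lemmas~\ref{lem:int2non-elliptic}, \ref{lem:int3non-elliptic} and~\ref{lem:not0003}.
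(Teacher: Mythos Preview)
Your proof is correct and follows essentially the same route as the paper: derive the enumeration from Lemma~\ref{lem:int2non-elliptic}, exclude $(0,0,0,3)$ via Lemma~\ref{lem:not0003}, and obtain corner points from Lemma~\ref{lem:int3non-elliptic} (columns 8, 10, 11, 14) and Theorem~\ref{thm:3segments} (columns 9 and 13). You in fact supply more detail than the paper --- the pairwise distinctness of the segments $F\cap F'$ and the explicit construction of three one-dimensional faces through a point needed to invoke Theorem~\ref{thm:3segments} --- though your optional normal-cone alternative to that theorem is not a proof as written: the ``small tilt'' step is precisely the delicate content of Section~\ref{sec:onedim}, and in particular can fail for $n\ge 5$ (Example~\ref{ex:3segments}).
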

\begin{proof}
The table can be derived from the fact that any two distinct non-elliptic faces
intersect along a segment (Lemma~\ref{lem:int2non-elliptic}).
\par
Let $a_i\in\mathbb Z_{\geq 0}$ denote the number of non-elliptic faces of type $i$
of $W$, $i=0,1,2,3$. A face of type $0$ cannot intersect any other non-elliptic face,
hence $a_0>0$ implies $a_1=a_2=a_3=0$ and $a_0=1$. Similarly, if $a_0=0$ and $a_1\geq 1$, 
then $a_1+a_2+a_3\leq 2$. Also, if $a_0=a_1=0$ and $a_2\geq 1$, then $a_2+a_3\leq 3$.
Finally, $a_3\leq 4$. The numbers $a_0=a_1=a_2=0$ and $a_3=3$ are excluded by 
Lemma~\ref{lem:not0003}.
\par
The existence of a corner point follows from Lemma~\ref{lem:int3non-elliptic} for the 
columns 8, 10, 11, and~14, and from Theorem~\ref{thm:3segments} for the columns 9 and~13.
\end{proof}
%
%
%
\section{Examples} \label{sec:examples}

Here we present examples of hermitian $4\times 4$ matrices $A_1,A_2,A_3$ 
with joint numerical ranges $W=W(A_1,A_2,A_3)$ having all possible numbers of 
non-elliptic faces listed in Theorem~\ref{thm:non-elliptic}. Figure \ref{fig:all-examples} shows $W$ of all the examples with non-elliptic faces highlighted. Proving that each rank-3 
face has the specified non-elliptic shape and that $W$ has no further rank-3 faces
is a straightforward application of the methods of
Remark~\ref{rem:detect-faces} and Remark~\ref{rem:deciding-shape}, 
and is therefore omitted.
\par
\begin{definition}
Let $u_0,u_1,u_2,u_3\in\R$, $A:=u_0\id_4+u_1A_1+u_2A_2+u_3A_3$, and let $A\{i,j\}$ ($i\neq j$)
be the principal minor of $A$ that is the determinant of the submatrix obtained by deleting 
all the rows and columns except for those with the indices $i,j\in\{1,2,3,4\}$. 
Similarly, $A\{i,j,k\}$ is a principal minor of order three. If $A$ has rank one, then
we call $(u_0,u_1,u_2,u_4)$ a \emph{rank-1 tuple} and we refer to the kernel projection 
of $A$ as the \emph{kernel projection} of $(u_0,u_1,u_2,u_4)$. 
\end{definition}
\begin{remark}[Controlling rank-3 and non-elliptic faces]\label{rem:detect-faces} 
A necessary condition for the existence of a non-elliptic face is that a rank-1 tuple 
$(u_0,u_1,u_2,u_3)$ exists. More specifically, every non-elliptic face $F$ of $W$ is a 
rank-3 face of $W$. It has the form $F=w(\cD_{P\M_dP})$, where $P\in\cP_n$ is the 
rank-3 projection associated to $F$. Since $F$ is an exposed face of $W$, there are 
$u_1,u_2,u_3\in\R$ such that $P$ is the spectral projection of $u_1A_1+u_2A_2+u_3A_3$ 
with respect to the smallest eigenvalue by Theorem~\ref{thm:PF-iso}. Equivalently, 
there is a rank-1 tuple $(u_0,u_1,u_2,u_4)$ whose kernel projection is $P$. The 
rank-one condition is rather easy to verify in terms of 
vanishing of the six order two principal minors 
\[
A\{1,2\},\quad
A\{1,3\},\quad
A\{1,4\},\quad
A\{2,3\},\quad
A\{2,4\},\quad
A\{3,4\}
\]
and the four order three principal minors
\[
A\{1,2,3\},\quad
A\{1,2,4\},\quad 
A\{1,3,4\},\quad
A\{2,3,4\}
\]
of $A=u_0\id_4+u_1A_1+u_2A_2+u_3A_3$, see \cite{Dickson1959},
Theorem~16 of Chapter~IV.
\end{remark}
\begin{remark}[Deciding on the shape of a rank-3 face]\label{rem:deciding-shape}
Let $P\in\cP_n$ be a rank-3 projection and let $F:=w(\cD_{P\M_4P})$. For example, 
if $(u_0,u_1,u_2,u_3)$ is a rank-1 tuple, whose kernel projection is $P$, then $F$ 
is a rank-3 face of $W$.
\begin{enumerate}[(a)]\itemsep=4pt
\item
\textbf{Employing $3\times 3$ matrices.} Let $B_1,B_2$ be hermitian $3\times 3$ 
matrices such that the linear span of the matrices $\id_3,B_1,B_2$ is the same 
as that of 
\[
\id_3,\quad
(UA_1U^\ast)[3],\quad
(UA_2U^\ast)[3],\quad
(UA_3U^\ast)[3],
\]
where $U\in\M_4$ is a unitary such that $UPU^\ast=\id_3\oplus 0$. Then 
Remark~\ref{rem:faces-JNR}~(d) guarantees that $F$ is affinely isomorphic to the joint 
numerical range $W(B_1,B_2)$.
\item
\textbf{Invoking the numerical range.}
One can also use the numerical range $\JNR (B)$ of 
\[
B:=B_1+\ii B_2
\] 
to decide on the shape of $F\cong W(B_1,B_1)$ because $W(B_1,B_1)=\JNR (B)$ holds
as recalled in \eqref{eq:NR}. We employ Kippenhahn's classification to decide on 
the shape of $\JNR (B)$.
\item
\textbf{Deciding unitary reducibility.}
A square matrix is \emph{unitarily reducible} if it is unitarily similar to a block 
diagonal matrix with at least two proper blocks. Otherwise the matrix is 
\emph{unitarily irreducible}.
We claim that the $3\times 3$ matrix $B$ is unitarily irreducible if and only if the 
positive semidefinite matrix
\[
S:=[B_1,B_2]^\ast[B_1,B_2] 
+ [B_1^2,B_2]^\ast[B_1^2,B_2]
+ [B_1,B_2^2]^\ast[B_1,B_2^2]
+ [B_1^2,B_2^2]^\ast[B_1^2,B_2^2]
\]
is positive definite, where $[X,Y]:=XY-YX$ is the commutator of $X,Y\in\cH_3$. The 
Sylvester criterion allows us to decide the positive definiteness of $S$ easily.
\par
We prove our claim. If $B$ is unitarily reducible then it generates a unital *-algebra 
(the smallest *-algebra on $\C^3$ containing $B$ and $\id_3$) that is strictly smaller 
than $\M_3$. The converse is also true since any unital *-algebra on $\C^3$ has a block
diagonal form up to unitary similarity (see, e.g., \cite[Theorem~5.6]{Farenick2001}). 
Clearly, the unital *-algebra generated by $B$ is the set of the values of all 
noncommutative polynomials in the variables $B_1$ and $B_2$ (we identify scalars with 
scalar matrices). According to \cite{AslaksenSletsjoe2009}, Theorem~8, $B$ generates 
the unital *-algebra $\M_3$ if and only if $S$ is positive definite. 
\item 
\textbf{The unitarily reducible case.} 
Let $B$ be unitarily reducible. 
\begin{itemize}
\item
If $B_1$ and $B_2$ commute, then $B$ is normal and $\JNR (B)$ is the convex hull of the 
three eigenvalues of $B$, which is a triangle if and only if the eigenvalues are real 
affinely independent.
\item 
If $B_1$ and $B_2$ do not commute then the curve $p_B=0$ in $\C\P^2$ defined by the polynomial 
\eqref{eq:hyperbolic} is the union of a conic $e$ and a line $\ell$. The Kippenhahn curve 
$C_\R(B)$ is the union of an ellipse $e^\ast$ and a point $p$ in $\R^2$. By substituting the 
equation of $\ell$ into that of $e$, it is easy to compute the two points in $e\cap\ell$. 
They are distinct and real if and only if $p$ lies outside $e^\ast$, if and only if 
$\JNR (B)$ is a droplet.
\end{itemize}
\item 
\textbf{The unitarily irreducible case.}
Let $B$ be unitarily irreducible. Then $\JNR (B)$ is an ellipse, a loaf, or an oval. 
We use conditions from \cite{Keeler-etal1997} to distinguish these shapes.
\begin{itemize}
\item
\textbf{Ellipses.} Let $\lambda_1,\lambda_2,\lambda_3$ be the eigenvalues of $B$. By 
\cite[Theorem~2.3]{Keeler-etal1997}, the Kippenhahn curve $C_\R(B)$ consists of an 
ellipse and a point if and only if 
\begin{itemize}
\item 
the number $\delta:=\tr(B^\ast B)-\sum_{i=1}^3|\lambda_i|^2$ is strictly positive and
\item 
the number 
$\lambda:=\tr(B)+\frac{1}{\delta}(\sum_{i=1}^3|\lambda_i|^2\lambda_i-\tr(B^\ast B^2))$
coincides with at least one of the eigenvalues of $B$.
\end{itemize}
If these conditions are satisfied\footnote{%
We remark that $\lambda$ lies strictly inside the ellipse (not on the boundary of 
$\JNR (B)$) by \cite[Theorem~1.6.6]{HornJohnson1991} because $B$ is 
unitarily irreducible.}, 
then $C_\R(B)$ is the union of $\lambda$ and the ellipse having its foci at two other 
eigenvalues of $B$ and minor axis of length $\sqrt{\delta}$. 
Ellipticity criteria in some other settings can be found in
\cite[Proposition~3.2]{AdamMaroulas2002} and in \cite[Section~4]{BrownSpitkovsky2004},
see Remark~\ref{rem:bordered} above.
\item 
\textbf{Loafs.}
By \cite[Proposition~3.2]{Keeler-etal1997}, the numerical range $\JNR (B)$ is a loaf 
if and only if there are real $u_0,u_1,u_2\in\R$ such that $u_0\id_3+u_1B_1+u_2B_2$ 
has rank one. This matrix has rank one if and only if its three principal minors of 
order two and its determinant are zero (see \cite[p.~79]{Dickson1959}).
\item 
\textbf{Ovals.}
The preceding conditions for $\JNR (B)$ to be an ellipse or a loaf are not just 
sufficient but also necessary. Hence, their failure proves that $\JNR (B)$ is 
an oval.
\end{itemize}
\end{enumerate}
\end{remark}
%
The remainder of this section presents examples.
They are all depicted in Figure~\ref{fig:all-examples}.
\begin{figure}[p]
\foreach \img/\exlbl/\extxt in {
  tE0/ex-0/E0,    tE1/ex-1/E1,    tE2/ex-2/E2,    tE3/ex-3/E3,
  tE4/ex-4/E4,    tE5/ex-5/E5,    tE6/ex-6/E6,    tE7a/ex-7a/E7a,
  tE7b/ex-7b/E7b, tE8/ex-8/E8,    tE9/ex-9/E9,    tE10/ex-10/E10,
  tE11b/ex-11/E11,tE12/ex-12/E12,  tE13/ex-13/E13, tE14/ex-14/E14}{%
\begin{minipage}[b]{0.24\textwidth}\centering
    \includegraphics[width=\textwidth]{pictures/\img}\\
    \hyperref[\exlbl]{\extxt}\rule[-1ex]{0pt}{3ex}
\end{minipage} \hfill}
\caption{Exemples belonging to 15 classes of joint numerical ranges of 3
hermitian matrices of order 4 concerning the non-elliptic faces
listed in Theorem~\ref{thm:non-elliptic} and analyzed in Section~\ref{sec:examples}.}
\label{fig:all-examples}
\end{figure}
\newcommand{\eblock}[2][4cm]{\begin{minipage}[t]{\textwidth-#1}
#2\rule[-1ex]{0pt}{1.9ex}\end{minipage}\ignorespaces}
\begin{ourexample}[0]\label{ex-0}
No faces: $a_0=a_1=a_2=a_3=0$\\
This is the generic case (see \cite[Proposition~4.9]{Gutkin-etal2004}
and \cite[Theorem~4.2]{Szymanski-etal2018}). Exemplary matrices:
\[ 
\begin{bmatrix}
 0 & 1 & 0 & 0 \\
 1 & 0 & \ii & 0 \\
 0 & -\ii & 0 & 0 \\
 0 &  0 & 0 & 0 
\end{bmatrix}, 
\begin{bmatrix}
 0 & 0 & 0 & 0 \\
 0 & 0 & 1 & 0 \\
 0 & 1 & 0 & \ii \\
 0 & 0 & -\ii & 0 
\end{bmatrix},
\begin{bmatrix}
 0 & 0 & 0 & \ii \\
 0 & 0 & 0 & 0 \\
 0 & 0 & 0 & 1 \\
 -\ii & 0 & 1 & 0
\end{bmatrix}
\]
There are no rank-1 tuples.
\end{ourexample}
\begin{ourexample}[1]\label{ex-1}
Single oval face: $a_0=1$ and $a_1=a_2=a_3=0$\\
Exemplary matrices:
\[ 
\begin{bmatrix}
 0 & \ii & \ii & -2 \\
 -\ii & 0 & \ii & 2 \\
 -\ii & -\ii & 0 & 2 \ii \\
 -2 & 2 & -2 \ii & 0 
\end{bmatrix},
\begin{bmatrix}
 -2 & 1 & 1 & -2 \ii \\
 1 & 1 & 1 & 2 \ii \\
 1 & 1 & 1 & 2 \\
 2 \ii & -2 \ii & 2 & 0
\end{bmatrix},
\,\diag(0,0,0,-4)
\]
Rank-1 tuple: $(0, 0, 0, 1)$
\par
Note that examples with one particular face can be constructed by extending
a $3\times 3$ matrix (pair of hermitian matrices) with appropriate numerical range. 
See also Examples~\hyperref[ex-2]{E2}, \hyperref[ex-6]{E6} and \hyperref[ex-12]{E12}.
\end{ourexample}
\begin{ourexample}[2]\label{ex-2}
Single loaf face: $a_1=1$, $a_0=a_2=a_3=0$\\
Exemplary matrices:
\[ 
\begin{bmatrix}
 0 & 0 & 0 & 1 \\
 0 & 0 & 0 & -1 \\
 0 & 0 & -\sqrt{2} & -\ii \\
 1 & -1 & \ii & 0 
\end{bmatrix},
\begin{bmatrix}
 -\frac{1}{\sqrt{2}} & 0 & \frac{\ii}{\sqrt{2}} & -\ii \\
 0 & \frac{1}{\sqrt{2}} & \frac{\ii}{\sqrt{2}} & \ii \\
 -\frac{\ii}{\sqrt{2}} & -\frac{\ii}{\sqrt{2}} & 0 & 1 \\
 \ii & -\ii & 1 & 0 \end{bmatrix},
\,\diag(0,0,0,2)
\]
Rank-1 tuple: $(0, 0, 0, 1)$
\par
This example is constructed in an analogous way as Example~\hyperref[ex-1]{E1}.
\end{ourexample}
\begin{ourexample}[3]\label{ex-3}
Two loaf faces: $a_1=2$ and $a_0=a_2=a_3=0$\\
Exemplary matrices are
\[
\diag(1,1,1,0), 
F\diag(1,1,1,0)F^*,
\frac{1}{2}
\begin{bmatrix}
 0 & 1 & 0 & -1\\
 1 & 0 & 0 & 0\\ 
 0 & 0 & 0 & 0\\
 -1 & 0 & 0 & 0
\end{bmatrix},
\]
where $F=(F_{i,j})_{i,j=1}^4=\frac{1}{2}(\ii^{(i-1)(j-1)})_{i,j=1}^4$ 
is the unitary Fourier matrix of order $4$.\\
Rank-1 tuples: $(-1, 1, 0, 0)$ and $(-1, 0, 1, 0)$
\end{ourexample}
\begin{ourexample}[4]\label{ex-4}
A loaf and a droplet face: $a_1=a_2=1$ and $a_0=a_3=0$\\
Exemplary matrices:
\[ 
\diag(2,0,0,0),
\begin{bmatrix}
 0 & 0 & 0 & 0\\
 0 & 0 & 0 & 0\\
 0 & 0 & 1 & \ii\\ 
 0 & 0 & -\ii & 1
\end{bmatrix},
\begin{bmatrix}
	\frac12 & -\frac12 & \frac 12 &0\\
	-\frac12 & 1 &0 & 0\\
	\frac12& 0& 0& 1\\
	0&0&1&0\end{bmatrix}
\]
Rank-1 tuples: $(0,1,0,0)$ and $(0,0,1,0)$
\end{ourexample}
\begin{ourexample}[5]\label{ex-5}
A loaf and a triangular face: $a_1=a_3=1$ and $a_0=a_1=0$\\
Exemplary matrices:
\[ 
\diag(0,0,0,1),
\,\diag(0,0,1,0),
\frac12\begin{bmatrix}
 -1 & 0 & -\ii &0\\
 0 & 1 & 1 & 0\\
 \ii & 1 & 0 & 1\\
 0 & 0 & 1 & 0
\end{bmatrix}
\]
Rank-1 tuples: $(0,1,0,0)$ and $(0,0,1,0)$
\end{ourexample}
\begin{ourexample}[6]\label{ex-6}
Single droplet face: $a_2=1$ and $a_0=a_1=a_3=0$\\
Exemplary matrices:
\[ 
\begin{bmatrix}
 -1 & 1 & 0 & -1 \\
 1 & -1 & 0 & 1 \\
 0 &0  & 1 & \ii \\
 -1 & 1 & -\ii & 0 
\end{bmatrix}, 
\begin{bmatrix}
 0 & -\ii & 0 & -\ii \\
 \ii & 0 & 0 & \ii \\
 0 & 0 & 0 & 1 \\
 \ii & -\ii & 1 & 0 
\end{bmatrix},
\,\diag(0,0,0,-2)
\]
Rank-1 tuple: $(0,0,0,1)$
\par
The appropriate triple of hermitian matrices can be constructed in the same way 
as in the case of other unique rank-3 faces 
(cf.\ Examples~\hyperref[ex-1]{E1} and~\hyperref[ex-2]{E2}).
A different construction is described in Remark~\ref{rem:two1D}~(b).
\end{ourexample}
\begin{ourexample}[7a]\label{ex-7a}
\emph{Yin-Yang configuration of droplet faces}: $a_0=a_1=a_3=0$ and $a_2=2$\\
Exemplary matrices:
\[ 
\begin{bmatrix}
 0 & 0 & 0 & 0\\
 0 & 0 & 0 & 0\\
 0 & 0 & 1 & 1\\
 0 & 0 & 1 & 1
\end{bmatrix},
\begin{bmatrix}
 0 & 1 & 0 & 0\\
 1 & 0 & 0 & 0\\
 0 & 0 & 1 & \ii\\
 0 & 0 & -\ii & 1
\end{bmatrix},
\begin{bmatrix}
 1 & \ii & 0 & 0\\
-\ii & 1 & 0 & 0\\
 0 & 0 & 0 & 0 \\ 
 0 & 0 & 0 & 0
\end{bmatrix}
\]
Rank-1 tuples: $(0,1,0,0)$ and $(0,0,0,1)$
\par
The joint numerical range is the convex hull of two orthogonal circles
with centers $(0,0,1)$ and $(1,1,0)$. The intersection of the two droplet faces 
is the segment with endpoints $(0,0,0)$ and $(0,1,0)$.
\end{ourexample}
\begin{ourexample}[7b]\label{ex-7b}
Another example of two droplets: $a_2=2$ and $a_0=a_1=a_3=0$\\
Exemplary matrices:
\[
\begin{bmatrix}
 -1 & 0 & \sqrt 2 & 0\\
 0 & 1 & 0 & 0\\
 \sqrt 2 & 0 & 0 & 0\\
 0 & 0 & 0 & 1
\end{bmatrix},
\begin{bmatrix}
 -1 & 0 & -\sqrt 2 & 0\\
 0 & 1 & 0 & 0\\
 -\sqrt 2 & 0 & 0 & 0\\
 0 & 0 & 0 & 1
\end{bmatrix},
\begin{bmatrix}
 0 & 2 & 0 & 0\\
 2 & 0 & 0 & 0\\
 0 & 0 & 0 & 0\\
 0 & 0 & 0 & 1
\end{bmatrix}
\]
Rank-1 tuples: $(-1,1,0,0)$ and $(-1,0,1,0)$
\par
This example was constructed by adding a corner point to a joint numerical range 
of matrices of order $3$ having two elliptic faces (an affine transformation of 
\cite[Example 6.3]{Szymanski-etal2018}).
\end{ourexample}
\begin{ourexample}[8]\label{ex-8}
Three droplet faces: $a_2=3$ and $a_0=a_1=a_3=0$\\ 
Exemplary matrices:
\[ 
\begin{bmatrix}
 2 & 0 & 0 & 0\\
 0 & 2 & 0 & 0\\
 0 & 0 & 1 & \ii\\
 0 & 0 & -\ii & 1
\end{bmatrix}, 
\begin{bmatrix}
 1 & 0 & -\ii & 0\\
 0 & 2 & 0 & 0\\
 \ii & 0 & 1 & 0\\
 0 & 0 & 0 & 2
\end{bmatrix},
\begin{bmatrix}
 1 & 0 & 0 & \ii\\
 0 & 2 & 0 & 0\\
 0 & 0 & 2 & 0\\
 -\ii & 0 & 0 & 1
\end{bmatrix} 
\]
Rank-1 tuples: $(-2,1,0,0)$, $(-2,0,1,0)$, and $(-2,0,0,1)$
\end{ourexample}
\begin{ourexample}[9]\label{ex-9}
A droplet and a triangular face: $a_2=a_3=1$ and $a_0=a_1=0$\\
Exemplary matrices:
\[
\begin{bmatrix}
 1 & 0 & 0 & 0\\
 0 & 0 & 1 & 0\\
 0 & 1 & 0 & 0\\ 
 0 & 0 & 0 & 1
\end{bmatrix}, 
\begin{bmatrix}
 0 & 0 & \sqrt2 & 0\\ 
 0 & 0 & 0 & 0\\ 
 \sqrt2 & 0 & 0 & 0\\ 
 0 & 0 & 0 & 1
\end{bmatrix}, 
\,\diag(0,0,-2,0)
\]
Rank-1 tuples: $(0,0,0,1)$  and  $(-1,1,0,0)$
\par
This example was obtained by adding a corner point to the joint numerical range 
of three hermitian $3\times 3$ matrices having one ellipse and one segment on its 
boundary (cf.\ \cite[Example~6.7]{Szymanski-etal2018}).
The corner point in this class is inevitable, see Theorem~\ref{thm:3segments}.
\end{ourexample}
\begin{ourexample}[10]\label{ex-10}
A droplet and two triangular faces: $a_2=1, a_3=2$ and $a_0=a_1=0$\\
Exemplary matrices:
\[
\begin{bmatrix}
 1 & -\ii & 0 & 0\\
 \ii & 1 & 0 & 0\\
 0 & 0 & 0 & 0\\
 0 & 0 & 0 & 0
\end{bmatrix}, 
\begin{bmatrix}
 0 & 1 & 0 & 0\\
 1 & 0 & 0 & 0\\
 0 & 0 & 1 & 0\\ 
 0 & 0 & 0 & 1
\end{bmatrix},
\,\diag(0,0,1,0)
\]
Rank-1 tuples: $(0,0,0,1)$, $(-1,0,1,0)$, and $(0,1,0,0)$
\par
The triple of matrices is unitarily reducible and hence the joint numerical range 
is the convex hull of the points $(0,1,1)$, $(0,1,0)$, and the circle in the 
$x$-$y$-plane with center $(1,0,0)$ and radius $1$. The configuration of faces 
can be recovered using Lemma~\ref{lem:faces-convex-hull}.
\par
Note that due to Theorem~\ref{thm:3segments}, the example of this
class necessarily has two corner points (endpoints of the common edge of
the triangular faces), hence it is a convex hull
of a segment and an ellipsoid (the joint numerical range of
$2\times 2$ matrices). Unless the latter
is degenerated to an ellipse coplanar with exactly one of the corners,
the droplet face cannot appear (cf.\ Lemma~\ref{lem:faces-convex-hull}).
\end{ourexample}
\begin{ourexample}[11]\label{ex-11}
Two droplets and a triangle: $a_2=2$, $a_3=1$, and $a_0=a_1=0$\\
Exemplary matrices:
\[
\begin{bmatrix}
 1 & 0 & \ii\sqrt2 & 0\\
 0 & 0 & 0 & 0\\
 -\ii\sqrt2 & 0 & 2 & 0\\ 
 0 & 0 & 0 & 0
\end{bmatrix},
\begin{bmatrix}
 0 & 0 & 0 & 0\\
 0 & 1 & -\ii\sqrt2 & 0\\
 0 & \ii\sqrt2 & 2 & 0\\ 
 0 & 0 & 0 & 0
\end{bmatrix},
\,\diag(0,0,2,0)
\]
Rank-1 tuples: $(0,1,0,0)$, $(0,0,1,0)$, and $(0,0,0,1)$
\par
An example of this type can be obtained by adding a corner point to
the joint numerical range of $3\times 3$ matrices having two
ellipses and a segment in the boundary (see \cite[Example~6.3]{Szymanski-etal2018}). According to Lemma~\ref{lem:int3non-elliptic}, there has to be a corner point and all 
examples in the class share this construction (cf.\ also Lemma~\ref{lem:faces-convex-hull}). 
\end{ourexample}
\begin{ourexample}[12]\label{ex-12}
Single triangular face: $a_3=1$ and $a_0=a_1=a_2=0$\\
{Exemplary matrices:
\[ 
\begin{bmatrix}
 \sqrt3 & 0 & 0 & -1\\
 0 & -\frac{\sqrt3}2 & 0 & 1\\
 0 & 0 & -\frac{\sqrt3}2& -\ii\\
 -1 & 1 & \ii & 0
\end{bmatrix},
\begin{bmatrix}
 0 & 0 & 0 & \ii\\
 0 & \frac32 & 0 & -\ii\\
 0 & 0 & -\frac32 & 1\\
 -\ii & \ii & 1 & 0
\end{bmatrix},
\,\diag(0,0,0,-2)
\]
Rank-1 tuple: $(0,0,0,1)$} 
\par
This example is constructed similarly to Examples~\hyperref[ex-1]{E1}, \hyperref[ex-2]{E2} 
and~\hyperref[ex-6]{E6}.
\end{ourexample}
\begin{ourexample}[13]\label{ex-13}
Two triangular faces: $a_3=2$ and $a_0=a_1=a_2=0$\\
Exemplary matrices:
\[
\diag(0,0,1,-1),
\begin{bmatrix}
 1 & 0 & 0 & 0\\
 0 & 1 & 0 & 0\\
 0 & 0 & 0 & \frac12\\
 0 & 0 & \frac12 & 0
\end{bmatrix}, 
\begin{bmatrix}
 0 & 1 & 0 & 0\\
 1 & 0 & 0 & 0\\
 0 & 0 & 0 & -\ii\\
 0 & 0 & \ii & 0
\end{bmatrix} 
\]
Rank-1 tuples: $(-2,\sqrt{3},2,0)$  and  $(-2,-\sqrt{3},2,0)$
\par
By Theorem~\ref{thm:3segments}, the joint numerical range in this
class has two corner points and therefore the matrices admit
a common two-dimensional reducing subspace. The joint numerical range
is necessarily the convex hull of two points and a, possibly degenerated,
ellipsoid $E$, similarly as in Example~\hyperref[ex-10]{E10}.
The ellipsoid $E$ cannot intersect the segment between the corners and cannot
be enclosed in a plane containing one of them (the latter would
lead to class no.~10 or no.~14).
\end{ourexample}
\begin{ourexample}[14]\label{ex-14}
Four triangular faces: $a_3=4$ and $a_0=a_1=a_2=0$\\
Exemplary matrices:
\[
\diag(1,1,-1,-1),
\,\diag(1,-1,1,-1),
\,\diag(1,-1,-1,1)
\]
The case $a_3=4$ is attained solely in case of the joint numerical range
being a tetrahedron. For every tetrahedral joint numerical range, there 
exists an orthonormal basis simultaneously diagonalizing all three matrices.
\end{ourexample}
%
%
%
\section{One-dimensional faces and corner points}\label{sec:onedim}
Let $A_1,A_2,A_3$ be hermitian $n\times n$ matrices and let $W=W(A_1,A_2,A_3)$ have
dimension $\dim(W)=3$. We prove that the intersection of any three mutually distinct 
one-dimensional faces of $W$ that have a point in common is a corner point of $W$ if 
$n=4$. A counterexample is given for $n=5$.
\par
\begin{remark}[Recurrent arguments]\label{rem:recurrent-arg}
\begin{enumerate}[\ (a)]
\item
Every one-dimensional nonexposed face $G_1$ of $W$ is an exposed face of a 
two-dimensional exposed face $H_1$ of $W$, see the discussion below
equation~\eqref{eq:access-F}. The face $H_1$ is unique (because a 
one-dimensional face that is included in two distinct two-dimensional 
exposed faces is exposed). If $n=4$, then $H_1$ is an example of what we call a 
non-elliptic face in Definition~\ref{def:non-elliptic}.
\item
If two or more mutually distinct one-dimensional faces of $W$ have a point in 
common, then their intersection is an extreme point of $W$ (because the infimum 
in the lattice of faces is the intersection).
\end{enumerate}    
\end{remark}
\begin{lemma}\label{lem:3exp-segments}
Let three mutually distinct one-dimensional faces of $W$\!, at least two of which 
are exposed, have a point in common. Then their intersection is a corner point of 
$W$.
\end{lemma}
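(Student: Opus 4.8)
The strategy is to reduce to the case of three *exposed* one-dimensional faces, and then to exploit the normal-cone lattice from Proposition~\ref{pro:normal-cones}. So let $G_1,G_2,G_3$ be the three mutually distinct one-dimensional faces, with $G_1,G_2$ exposed, and let $p$ be the common point; by Remark~\ref{rem:recurrent-arg}~(b), $\{p\}=G_1\cap G_2\cap G_3$ is an extreme point of $W$. If $G_3$ happens to be exposed, we are in the purely-exposed situation described below. If $G_3$ is nonexposed, then by Remark~\ref{rem:recurrent-arg}~(a) it is an exposed face of a unique non-elliptic (two-dimensional, exposed) face $H_3$ of $W$, and $p\in G_3\subset H_3$. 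The idea is that $H_3$ then plays the role of a third "exposed face through $p$" at the two-dimensional level, so that we again have enough exposed faces through $p$ to force $\dim N_W(p)=3$.

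For the heart of the argument I would work entirely with normal cones. Since $\{p\}$ is an extreme point it is a face of $W$, and $N_W(p)$ is a normal cone. Each exposed face $G_i$ (and, where relevant, $H_3$) that contains $p$ gives, via Proposition~\ref{pro:normal-cones}, a normal cone $N_W(G_i)$ strictly contained in $N_W(p)$ (strict because $G_i\subsetneq\{p\}$ is false — rather $\{p\}\subsetneq G_i$, so $N_W(G_i)\subsetneq N_W(p)$ by antitonicity), and moreover $N_W(G_i)$ is a *face* of $N_W(p)$ because normal cones nested below a common normal cone $L\ne E$ are faces of it. A one-dimensional face $G_i$ has a two-dimensional normal cone $N_W(G_i)$ (in $\R^3$), since $\dim G_i+\dim N_W(G_i)\le 3$ with equality at relative-interior points and the inequality cannot be strict here without $G_i$ failing to be one-dimensional exposed — more carefully, $N_W(G_i)$ has dimension $3-\dim(\mathrm{aff}\,G_i)=2$. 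Thus $N_W(p)$ is a convex cone in $\R^3$ that has (at least) two distinct two-dimensional faces $N_W(G_1),N_W(G_2)$; a three-dimensional pointed cone is exactly what is produced, since a cone of dimension $\le 2$ cannot have two distinct proper two-dimensional faces. Hence $\dim N_W(p)=3$, i.e. $p$ is a corner point.

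The one genuinely delicate point — and the step I expect to be the main obstacle — is handling the nonexposed $G_3$: one must check that $N_W(H_3)$ (a two-dimensional normal cone, since $H_3$ is a two-dimensional exposed face) is genuinely *distinct* from both $N_W(G_1)$ and $N_W(G_2)$, so that it supplies new information, or alternatively that the segment $G_3$ being a face of $H_3$ forces the normal cone $N_W(G_3)$ (which is still a face of $N_W(p)$ by the poonem/access-sequence description, even though $G_3$ is nonexposed in $W$) to be two-dimensional and distinct from $N_W(G_1),N_W(G_2)$. Distinctness follows because if, say, $N_W(G_3)=N_W(G_1)$ then $G_1$ and $G_3$ would be exposed faces of $W$ sharing a normal cone, hence equal by the bijection in Proposition~\ref{pro:normal-cones} — but $G_3$ need not be exposed, so one instead argues that $\mathrm{aff}\,G_1\ne\mathrm{aff}\,G_3$ (the three segments are distinct and meet only at $p$, so their affine hulls are three distinct lines through $p$), and distinct affine hulls of the faces force distinct normal cones. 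With three distinct two-dimensional faces of $N_W(p)$ in hand, pointedness and $\dim N_W(p)=3$ follow as before, completing the proof.
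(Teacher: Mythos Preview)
Your overall strategy---normal cones via Proposition~\ref{pro:normal-cones}, replacing a nonexposed $G_3$ by its enclosing two-dimensional exposed face $H_3$---is exactly the paper's. The gap lies in your dimension assertions. You claim that a one-dimensional exposed face $G_i$ has $\dim N_W(G_i)=2$, citing ``$\dim G_i+\dim N_W(G_i)\le 3$ with equality at relative-interior points''; only the inequality holds. A solid circular cone (the convex hull of an apex and a disk, realizable as a joint numerical range by Lemma~\ref{lem:convWs}) has infinitely many one-dimensional exposed faces through the apex, the slant segments, each with a one-dimensional normal cone. Thus $N_W(G_1),N_W(G_2)$ may both be rays, and two distinct ray faces of $N_W(p)$ yield only $\dim N_W(p)\ge 2$, not $3$. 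Your parenthetical that $N_W(H_3)$ is two-dimensional is also backwards: a two-dimensional exposed face in $\R^3$ has a one-dimensional normal cone.

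The paper repairs this by using all three cones simultaneously. The exposed faces $G_1,G_2,G_3$ (with the nonexposed one replaced by its $H_i$) are mutually distinct, so by the bijection of Proposition~\ref{pro:normal-cones} their normal cones are three mutually distinct proper faces of $N_W(p)$, each a ray or a two-dimensional cone. A closed convex cone in $\R^3$ of dimension at most two has at most two proper faces of positive dimension (the two extreme rays if pointed, the boundary line if a half-plane), so three such faces force $\dim N_W(p)=3$. The paper also records that $\{p\}$ is an \emph{exposed} point of $W$, being the intersection of the two exposed segments---a point you leave implicit.
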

\begin{proof}
Let $G_1,G_2,G_3$ denote the one-dimensional faces in question and assume 
that $G_2,G_3$ are exposed. Then $G_2\cap G_3=\{p\}$ holds for
an exposed point $p$ of $W$, as the infimum in the lattice of exposed 
faces is the intersection. Also, $G_1\cap G_2\cap G_3=\{p\}$.
\par
If $G_1$ is exposed then, by Proposition~\ref{pro:normal-cones}, the normal 
cones $N_W(G_1)$, $N_W(G_2)$, $N_W(G_3)$ are rays or two-dimensional convex 
cones that are mutually distinct and that are proper faces of $N_W(p)$. This 
implies $\dim N_W(p)=3$. Let $G_1$ be a nonexposed face and $H_1$ the 
unique two-dimensional face containing $G_1$. The same argument as before
when $N_W(G_1)$ is replaced with $N_W(H_1)$ proves $\dim N_W(p)=3$.
\end{proof}
Example~\ref{ex:3segments} shows that the conclusion of Lemma~\ref{lem:3exp-segments} 
is no longer valid for $5\times 5$ matrices under the weaker assumption that only
one of the one-dimensional faces is exposed. Therefore, Theorem~\ref{thm:3segments} is 
a special result on $4\times 4$ matrices, not applicable to matrices of higher order.
\par
\begin{example}\label{ex:3segments}
Consider the convex set $C$ in $\R^3$ that is the convex hull of the unit circle 
in the $y$-$z$-plane and the triangle with vertices $p_1:=(-1,-1,0)\tp$, 
$p_2:=(-1,1,0)\tp$, $p_3:=(1,0,0)\tp$ in the $x$-$y$-plane.
\par
The segments $[p_1,q]$ and $[p_2,q]$ are nonexposed faces of $C$ and the segment 
$[p_3,q]$ is an exposed face of $C$. All three segments lie in the boundary of $C$ 
and intersect at the point $q:=(0,0,1)\tp$. But, $q$ is not a corner point of $C$. 
Note that $C$ can be realized as the joint numerical range of the hermitian 
$5\times 5$ matrices 
\[
\begin{bmatrix}
0 & 0\\
0 & 0
\end{bmatrix}
\oplus
\diag(-1,-1,1),
\quad
\begin{bmatrix}
0 & 1\\
1 & 0
\end{bmatrix}
\oplus
\diag(-1,1,0),
\quad
\begin{bmatrix}
0 & -\ii\\
\ii & 0
\end{bmatrix}
\oplus
\diag(0,0,0),
\]
which (by Lemma~\ref{lem:convWs} and Remark~\ref{rem:faces-JNR}~(b))
is the convex hull of the unit disk 
\[\textstyle
W\left(
\begin{bmatrix}
0 & 0\\
0 & 0
\end{bmatrix},
\begin{bmatrix}
0 & 1\\
1 & 0
\end{bmatrix},
\begin{bmatrix}
0 & -\ii\\
\ii & 0
\end{bmatrix}\right)
\]
in the $y$-$z$-plane and the points $p_1,p_2,p_3$ in the $x$-$y$-plane.
\end{example}
\begin{figure}[hbt]\centering
\includegraphics[width=4.5cm]{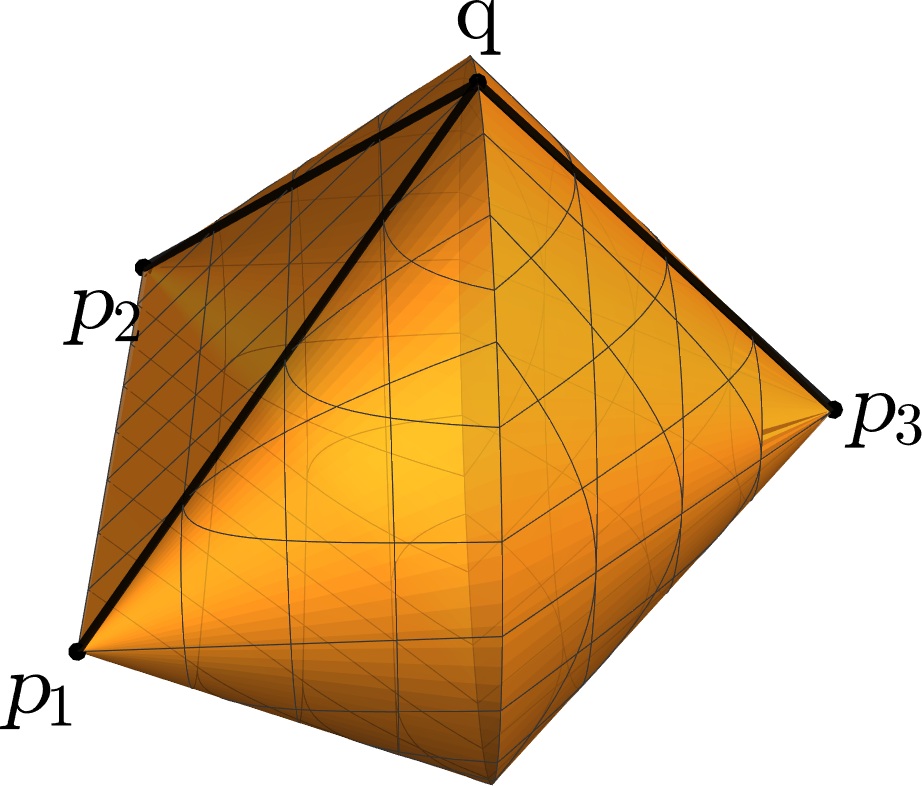}
\caption{%
The joint numerical range $C$ described in Example~\ref{ex:3segments}. The point $q$ 
is not a corner point of $C$ although it is the intersection of three mutually distinct 
one-dimensional faces of $C$. This can only happen for matrices of order five or greater
by Theorem~\ref{thm:3segments}.}
\label{fig:ex5-3}
\end{figure}
From here on, the hermitian matrices $A_1,A_2,A_3$ are assumed to have dimension $n=4$. 
In the following Lemma~\ref{lem:matrices2segements}, we simplify the matrices 
$A_1,A_2,A_3$ using the access sequence technique of Theorem~\ref{thm:PF-iso} 
and we customize the notation.
\par
\begin{lemma}\label{lem:matrices2segements}
Suppose that $W$ has a non-elliptic face $F_1$. Suppose further that $F_1$ has 
a one-dimensional face $F_2$ that intersects a one-dimensional exposed face $G$ 
of $W$ and $G\neq F_2$. Then $F_2\cap G=\{p\}$ is a singleton, which
is an extreme point of $W$. Up to a unitary similarity and an invertible affine 
transformation, the access sequence of faces 
$W\supset F_1\supset F_2\supset\{p\}$ corresponds to the access sequence 
$\id_4\geq P_1\geq P_2 \geq P_3$ of orthogonal projections 
\[
P_1 =\diag(1,1,1,0),
\quad
P_2 =\diag(1,1,0,0),
\quad
P_3 =\diag(1,0,0,0),
\]
and 
\begin{equation}\label{eq:custMat}
A_1=\diag(0,0,0,1),
\quad
A_2=0\oplus
C,
\quad
A_3=
\begin{bmatrix}
\diag(0,0,1) & \ket{\varphi}\\
\bra{\varphi} & 0
\end{bmatrix},
\end{equation}
where $C\in\M_3$ is positive semidefinite of rank two and 
$\ket{\varphi}=(w_1,w_2,w_3)\tp\in\C^3$. Note $p=(0,0,0)\tp$. The 
projection associated to $G$ is the kernel projection of $A_2$.
\end{lemma}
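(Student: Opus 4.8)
The plan is to prove the singleton claim by a face-lattice argument, use Example~\ref{exa:rankF} to diagonalize the three projections, and then construct an invertible affine transformation of $\R^3$ that brings the matrices into the shape \eqref{eq:custMat}; the delicate point is the invertibility of that transformation, and it is there that the hypothesis $G\neq F_2$ is used.

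First I would prove the singleton claim. Since $F_2$ and $G$ are distinct one-dimensional faces of $W$ that meet, their intersection is a face of $W$ (the infimum in the face lattice being the intersection) contained in the segment $F_2$ but unequal to it; hence $F_2\cap G=\{p\}$ is a single point, and a singleton face is an extreme point of $W$ (cf.\ Remark~\ref{rem:recurrent-arg}~(b)). In particular $p$ is an endpoint of the segment $F_2$, so Example~\ref{exa:rankF} applies: after a unitary similarity the access sequence of projections corresponding to $W\supset F_1\supset F_2\supset\{p\}$ is $\id_4\geq P_1\geq P_2\geq P_3$ with $P_1,P_2,P_3$ as in the statement, and $p=w(\ketbra{e_1})$ for $e_1=(1,0,0,0)\tp$ because $\ketbra{e_1}$ is the only density matrix in $\cD_{P_3\M_4P_3}=w|_{\cD_4}^{-1}(\{p\})$.

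Next I would analyse the projection $Q$ associated to $G$. Its rank is $2$: rank $4$ gives $G=W$, rank $1$ gives a point, and a rank-$3$ face distinct from the non-elliptic face $F_1$ would be two-dimensional by Lemma~\ref{lem:int2non-elliptic}. Also $\ketbra{e_1}\in\cD_{Q\M_4Q}=w|_{\cD_4}^{-1}(G)$ since $w(\ketbra{e_1})=p\in G$, so $e_1\in\range(Q)$. By the definition of an access sequence of projections there are real combinations $B_1,B_2$ of $A_1,A_2,A_3$ such that $P_1$ is the spectral projection of $B_1$ for its smallest eigenvalue and $\diag(1,1,0)$ is that of the leading $3\times3$ block $B_2[3]$; since $P_1$ has rank $3$ and $P_2$ rank $2$, this forces $B_1=\diag(\lambda_1,\lambda_1,\lambda_1,\mu_1)$ and $B_2[3]=\diag(\lambda_2,\lambda_2,\mu_2)$ with $\mu_1>\lambda_1$ and $\mu_2>\lambda_2$. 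Likewise $Q$ is the spectral projection of a real combination $B_G$ of $A_1,A_2,A_3$ for its smallest eigenvalue $\kappa$, so $B_G-\kappa\id_4\geq0$, $\ker(B_G-\kappa\id_4)=\range(Q)\ni e_1$, and $\rk(B_G-\kappa\id_4)=4-\rk(Q)=2$. With $D:=(B_2-\lambda_2\id_4)/(\mu_2-\lambda_2)$ I then set
\[
\hat A_1:=\frac{B_1-\lambda_1\id_4}{\mu_1-\lambda_1},\qquad
\hat A_2:=B_G-\kappa\id_4,\qquad
\hat A_3:=D-D_{44}\,\hat A_1 .
\]
Then $\hat A_1=\diag(0,0,0,1)$; $\hat A_2=0\oplus C$ with $C\in\M_3$ positive semidefinite of rank $2$, since $e_1\in\ker(\hat A_2)$; and $\hat A_3$ has leading block $\diag(0,0,1)$ and vanishing $(4,4)$-entry, that is, it has the form in \eqref{eq:custMat} with $\ket{\varphi}$ its $\{1,2,3\}\times\{4\}$ block. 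Moreover $w_{\hat A}(\ketbra{e_1})=\bigl((\hat A_1)_{11},(\hat A_2)_{11},(\hat A_3)_{11}\bigr)=(0,0,0)\tp$, and $Q$ is the kernel projection of $\hat A_2$, i.e.\ the projection associated to $G$.

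It remains --- and this is the crux --- to verify that $(A_1,A_2,A_3)\mapsto(\hat A_1,\hat A_2,\hat A_3)$ is an \emph{invertible} affine transformation; since each $\hat A_i$ lies in the four-dimensional space $\spn\{\id_4,A_1,A_2,A_3\}$, it suffices that $\{\id_4,\hat A_1,\hat A_2,\hat A_3\}$ be linearly independent, and such a transformation does not affect faces or associated projections. Every element of $\spn\{\id_4,\hat A_1\}$ has scalar leading block while $\hat A_3$ does not, so $\{\id_4,\hat A_1,\hat A_3\}$ is linearly independent and every matrix $M$ in its span satisfies $M_{11}=M_{22}$. Hence if $\hat A_2$ lay in that span, then $(\hat A_2)_{11}=0$ would force $(\hat A_2)_{22}=0$, and positivity of $\hat A_2$ would make its entire second row and column vanish, giving $e_2\in\ker(\hat A_2)=\range(Q)$; together with $e_1\in\range(Q)$ and $\rk(Q)=2$ this forces $Q=\diag(1,1,0,0)=P_2$, hence $G=F_2$, a contradiction. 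Therefore the affine transformation is invertible, and together with the unitary of the first step it brings $A_1,A_2,A_3$ into the asserted form. I expect this independence argument --- excluding $\hat A_2\in\spn\{\id_4,\hat A_1,\hat A_3\}$ --- to be the main obstacle.
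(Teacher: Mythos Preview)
Your proof is correct and follows essentially the same strategy as the paper: invoke Example~\ref{exa:rankF}, establish $\rk Q=2$ via Lemma~\ref{lem:int2non-elliptic}, and use the hypothesis $G\neq F_2$ to exclude $Q=P_2$ at the crucial step. The paper's organization differs only cosmetically---it first fixes $A_1,A_2$ to witness $P_1,Q$ and then solves an equation for the leading $3\times3$ block of $A_3$ through a short case analysis, whereas you build all three $\hat A_i$ simultaneously from the access-sequence witnesses and check invertibility by one linear-independence argument---but the mathematical content is the same.
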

\begin{proof}
Since $F_2\cap G=\{p\}$ is an extreme point of $W$, it follows that 
$p$ is an endpoint of the segment $F_2$ (and of $G$). We can therefore 
use the access sequences provided in Example~\ref{exa:rankF}. The projection 
$Q$ associated to $G$ has rank $\rk(Q)=2$. Otherwise, $\rk(Q)=3$ would 
contradict Lemma~\ref{lem:int2non-elliptic} because $G\neq F_1$.
\par
We analyze the exposed faces $F_1$ and $G$ using access sequences of lengths one. 
Applying an invertible affinity as described by the equations \eqref{eq:affT} and 
\eqref{eq:affA}, we can take $A_1$ (resp., $A_2$) such that $P_1$ 
(resp., $Q$) is the spectral projection of $A_1$ (resp., $A_2$) corresponding to 
the smallest eigenvalue. Thus, we take $A_1=\diag(0,0,0,1)$ and $A_2$ a positive 
semidefinite matrix of rank two. Then $A_2=0\oplus C$ holds for a positive 
semidefinite $3\times 3$ matrix $C$, because $P_3\leq Q$ is implied by 
$p\in G$. We write
\[
C=\begin{bmatrix}
B & \ket{\eta}\\
\bra{\eta} & a
\end{bmatrix},
\quad
A_2=0\oplus
\begin{bmatrix}
B & \ket{\eta}\\
\bra{\eta} & a
\end{bmatrix},
\quad
\text{and}
\quad
A_3=
\begin{bmatrix}
D & \ket{\xi}\\
\bra{\xi} & 0
\end{bmatrix},
\]
where $B\in\M_2$ is positive semidefinite, $\ket{\eta}\in\C^2$, $a\geq 0$, 
$D\in\M_3$ is hermitian, and $\ket{\xi}\in\C^3$. The bottom right entry of 
$A_3$ was tacitly erased adding a scalar multiple of $A_1$ to $A_3$.
\par
Considering the access sequence $\id_4\geq P_1\geq P_2$ of length two, we 
know that $P_2$ is the spectral projection of a linear combination of 
matrices $P_1A_iP_1$, $i=1,2,3$, corresponding to the smallest spectral 
value in the algebra $\M_3\oplus 0$. There exist
$\lambda_1,\lambda_2,\lambda_3\in\R$ such that
\begin{align}\label{eq:P2acc}
\nonumber
\diag(0,0,1)\oplus 0
&=P_1(\lambda_1\id_4+\lambda_2 A_2+\lambda_3 A_3)P_1\\
&=(\lambda_1\id_3+\lambda_2(0\oplus B)+\lambda_3 D)\oplus 0.
\end{align}
A solution to equation \eqref{eq:P2acc} exists by Theorem~\ref{thm:PF-iso}. We 
use it to solve for $D$. First, note that $\lambda_1=\lambda_3=0$ is not 
possible, because $B$ is not a scalar multiple of $\diag(0,1)$. If it were, 
then $C\geq 0$ would imply $\ket{\eta}=(0,z)\tp$ for some $z\in\C$, see for 
example \cite{HornZhang2005}. Then $P_2=Q$ would be the kernel projection of 
$A_2$ in contradiction to the assumption $F_2\neq G$. Second, $\lambda_1\neq\lambda_3=0$ 
is not possible, as the top left entries of the two sides of equation \eqref{eq:P2acc} 
would differ. This proves $\lambda_3\neq 0$ and shows that there are $s_1,s_2,s_3\in\R$ 
such that $s_3\neq 0$ and 
\[
D=s_1\id_3+s_2(0\oplus B)+s_3\diag(0,0,1).
\]
Applying another affinity, we replace $A_3$ with 
$A_3-s_1\id_4+(s_1+as_2)A_1-s_2A_2$ divided by $s_3$. This yields the matrices 
above.
\end{proof}
The following lemma deserves to be stated explicitly as it is used 
twice, in Propositions~\ref{pro:2segments} and ~\ref{pro:corner2non-ell}.
\par
\begin{lemma}\label{lem:3x3nonexp1}
Assume $A_1,A_2,A_3\in\C P\oplus P'\M_4P'$, where $P\in\cP_4$ is a nonzero 
orthogonal projection and $P':=\id_4-P$. Let $p_i\in\R$ be such that 
$A_iP=p_iP$, $i=1,2,3$. If $W$ has a one-dimensional nonexposed face, then 
$(p_1,p_2,p_3)\tp$ is a corner point of $W$. 
\end{lemma}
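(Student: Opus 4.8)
The plan is to use the reducibility of the tuple to write $W$ as the convex hull of a point and a lower-dimensional joint numerical range, and then to finish by a short inspection of the normal cone at that point. By Lemma~\ref{lem:convWs}, the hypothesis $A_1,A_2,A_3\in\C P\oplus P'\M_4P'$ together with $A_iP=p_iP$ yields
\[
W=\conv\big(\{p\}\cup W'\big),\qquad p=(p_1,p_2,p_3)\tp,
\]
where $W':=W_{P'\M_4P'}(P'A_1P',\dots,P'A_3P')=w_{A_1,A_2,A_3}(\cD_{P'\M_4P'})$. By Remark~\ref{rem:faces-JNR}~(b), $W'$ is the joint numerical range of three hermitian $r'\times r'$ matrices with $r':=\rk(P')=4-\rk(P)$. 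Since $\dim(W)=3$ and adjoining one point raises the dimension by at most one, $\dim(W')\geq 2$; hence $r'\geq 2$ and $\rk(P)\in\{1,2\}$.

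The first step is to exclude $p\in W'$. If $p$ were in $W'$, then $W=W'$ would itself be the joint numerical range of three hermitian $r'\times r'$ matrices with $r'\leq 3$ and $\dim(W)=3$. By Theorem~\ref{thm:PF-iso} applied in matrix size $r'$, every proper face of $W$ has an associated orthogonal projection of rank at most $r'-1\leq 2$, hence is a point or an affine image of the three-dimensional ball $\cD_2$; in either case a two-dimensional such face is an elliptic disc. This contradicts Remark~\ref{rem:recurrent-arg}~(a), by which the given one-dimensional nonexposed face of $W$ is an exposed face of a two-dimensional face of $W$ that is \emph{not} an elliptic disc. Therefore $p\notin W'$. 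Combined with $\dim(W')\geq 2$ this leaves $\dim(W')\in\{2,3\}$, and if $\dim(W')=2$ then $p$ lies outside the affine hull of $W'$, for otherwise $W=\conv(\{p\}\cup W')$ would be contained in that plane, contradicting $\dim(W)=3$.

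The second step is elementary convex geometry. Set $K:=\{s(x-p)\mid s\geq 0,\ x\in W'\}$; since $W-p=\{t(x-p)\mid t\in[0,1],\ x\in W'\}$, this $K$ is exactly the cone of feasible directions of $W$ at $p$. Because $W'$ is compact and $p\notin W'$, the set $W'-p$ is compact and avoids the origin, so $K$ is closed; because $p\notin W'$, no segment contained in $W'$ passes through $p$, so $K$ contains no line, i.e.\ $K$ is pointed; and $K$ is full-dimensional, since either $\dim(W')=3$ and $K\supset W'-p$ already spans $\R^3$, or $\dim(W')=2$ and $W'-p$ is a two-dimensional convex body lying in a plane missing the origin, whose conical hull is three-dimensional. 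Finally $N_W(p)=\{u\mid\langle y-p,u\rangle\geq 0\ \forall y\in W\}$ coincides with the dual cone $K^{*}=\{u\mid\langle v,u\rangle\geq 0\ \forall v\in K\}$, because $W'\subset W$ and every direction in $K$ is a nonnegative multiple of some $y-p$ with $y\in W$. The dual cone of a closed pointed cone in $\R^3$ is three-dimensional, so $\dim N_W(p)=3$ and $p=(p_1,p_2,p_3)\tp$ is a corner point of $W$.

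The substantive point is the first step: one must check that the hypothesis ``$W$ has a one-dimensional nonexposed face'' genuinely forbids $W$ from coinciding with $W'$, and this is where Remark~\ref{rem:recurrent-arg}~(a) and the face description of Theorem~\ref{thm:PF-iso} are indispensable. The case $\rk(P)=2$, where $W'$ is an ellipsoid, needs no separate treatment: either $p\in W'$, ruled out as above, or $p\notin W'$ and the second step applies verbatim. The remaining verifications — closedness, pointedness, and full dimension of $K$, and the identity $N_W(p)=K^{*}$ — are routine.
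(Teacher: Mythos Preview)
Your proof is correct and follows the same overall plan as the paper's: decompose $W=\conv(\{p\}\cup W')$ via Lemma~\ref{lem:convWs}, exclude $p\in W'$, and conclude that $p$ is a corner point. The details differ in two places. For the exclusion of $p\in W'$, the paper argues case by case on $s=\rk(P')$ and, for $s=3$, invokes \cite[Lemma~4.3]{Szymanski-etal2018} to the effect that every nonexposed face of the joint numerical range of three hermitian $3\times 3$ matrices is a nonexposed \emph{point}; you instead observe uniformly that every proper two-dimensional face of an $r'\times r'$ joint numerical range with $r'\leq 3$ is an elliptic disc (being a two-dimensional linear image of $\cD_2$) and derive a contradiction from Remark~\ref{rem:recurrent-arg}~(a). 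Your route is more self-contained, avoiding the external citation. For the implication ``$p\notin W'\Rightarrow p$ is a corner point of $W$'', the paper simply asserts it, whereas you supply the dual-cone argument explicitly; this fills in a detail the paper takes for granted.
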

\begin{proof}
Let $s<4$ be the rank of the projection $P'=\id_4-P$. By Lemma~\ref{lem:convWs}, 
the joint numerical range $W$ is the convex hull of $p$ and the joint numerical 
range 
\[
W':=W_{P'\M_nP'}(P'A_1P',P'A_2P',P'A_3P'),
\] 
which by Remark~\ref{rem:faces-JNR}~(a) and~(b) is the joint numerical range of three 
hermitian $s\times s$ matrices. If $p$ lies outside $W'$, then $p$ is a 
corner point of $W$. Otherwise, $p\in W'$ leads to a contradiction, because
$W'=W$ has no one-dimensional nonexposed faces. Indeed, if $s=3$, then every 
nonexposed face of $W'$ is a nonexposed point by \cite[Lemma~4.3]{Szymanski-etal2018}. The convex set $W'$ has no nonexposed faces at 
all if $s=2$ (where $W'$ is a linear image of a three-dimensional Euclidean 
ball) or if $s=1$ (where $W'$ is a singleton).
\end{proof}
The following proposition highlights a special property of 
$4\times 4$ matrices, whose analogue fails for the $5\times 5$ matrices in 
Example~\ref{ex:3segments} above.
\par
\begin{proposition}\label{pro:2segments}
Suppose a one-dimensional nonexposed face $F_2$ of $W$ intersects a 
one-di\-men\-sion\-al exposed face $G$ of $W$. Let $p$ be the extreme point 
of $W$ such that $F_2\cap G=\{p\}$. Then $p$ is a corner point of $W$ or $G$ 
is contained in the unique non-elliptic face of $W$ that includes $F_2$.
\end{proposition}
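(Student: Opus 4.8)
The plan is to apply Lemma~\ref{lem:matrices2segements} and then exploit the pure-state parametrization of one-dimensional faces from Lemma~\ref{lem:pureJNR}. By Remark~\ref{rem:recurrent-arg}~(a), the nonexposed segment $F_2$ is an exposed face of a unique non-elliptic face $F_1$ of $W$, and $G\neq F_2$ because $G$ is exposed while $F_2$ is not; thus the hypotheses of Lemma~\ref{lem:matrices2segements} hold. After the unitary similarity and invertible affine transformation provided there, $A_1,A_2,A_3$ take the normal form~\eqref{eq:custMat}, so $p=(0,0,0)\tp$, the projection associated to $F_1$ is $P_1=\diag(1,1,1,0)$, and the projection $Q$ associated to $G$ is the kernel projection of $A_2=0\oplus C$. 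Since $C$ is positive semidefinite of rank two, $\ker C=\C\ket{\psi}$ for a unit vector $\ket{\psi}=(\psi_1,\psi_2,\psi_3)\tp\in\C^3$, and $\range(Q)=\spn\{\ket{e_1},\,\psi_1\ket{e_2}+\psi_2\ket{e_3}+\psi_3\ket{e_4}\}$. By Theorem~\ref{thm:PF-iso} the assignment $F\mapsto w|_{\cD_4}^{-1}(F)=\cD_{P\M_4P}$ preserves inclusions, so $G\subseteq F_1$ holds if and only if $\range(Q)\subseteq\range(P_1)=\spn\{\ket{e_1},\ket{e_2},\ket{e_3}\}$, that is, if and only if $\psi_3=0$. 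It therefore suffices to prove that $\psi_3\neq 0$ forces $p$ to be a corner point of $W$.

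Assume $\psi_3\neq 0$. The first, and main, step is to show that the first component $w_1$ of $\ket{\varphi}$ in~\eqref{eq:custMat} vanishes. Since $G$ is one-dimensional, Lemma~\ref{lem:pureJNR} gives $G=w(\ext\cD_{Q\M_4Q})$, i.e.\ $G$ is the set of triples $(\braket{v}{A_1v},\braket{v}{A_2v},\braket{v}{A_3v})$ with $\ket{v}=\alpha\ket{e_1}+\beta(\psi_1\ket{e_2}+\psi_2\ket{e_3}+\psi_3\ket{e_4})$ and $|\alpha|^2+|\beta|^2=1$. A short computation, using $\ket{v}\in\ker A_2$, gives $\braket{v}{A_2v}=0$, $\braket{v}{A_1v}=|\beta|^2|\psi_3|^2$, and $\braket{v}{A_3v}=2\,\Re(\bar\alpha\beta\,w_1\psi_3)+|\beta|^2K$ with $K\in\R$ independent of $\alpha$. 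If $w_1\psi_3\neq 0$, then fixing $|\beta|^2\in(0,1)$ and varying the phase of $\bar\alpha\beta$ makes the third coordinate of these triples fill a nondegenerate interval while the first coordinate stays fixed and nonzero; letting $|\beta|^2$ vary as well then exhibits a two-dimensional subset of $G$, contradicting $\dim(G)=1$. Hence $w_1\psi_3=0$, and $\psi_3\neq 0$ forces $w_1=0$.

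With $w_1=0$ we have $A_i\ket{e_1}=0$ for $i=1,2,3$, so each $A_i$ is $0$ direct-summed with a hermitian matrix on $\spn\{\ket{e_2},\ket{e_3},\ket{e_4}\}$; in particular $p=w(\ketbra{e_1})$ is the origin and
\[
N_W(p)=\{(s_1,s_2,s_3)\tp\in\R^3\mid s_1A_1+s_2A_2+s_3A_3\geq 0\},
\]
where positive semidefiniteness may be tested on the $3\times 3$ block alone. For $t>0$ that block of $tA_1+A_2$ is $t\diag(0,0,1)+C$; being a sum of positive semidefinite matrices, any vector in its kernel lies in $\ker C=\C\ket{\psi}$ and has vanishing $\ket{e_4}$-coefficient, hence is zero because the $\ket{e_4}$-coefficient of $\ket{\psi}$ is $\psi_3\neq 0$. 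Thus $t\diag(0,0,1)+C$ is positive definite, and since positive definiteness is an open condition a whole neighbourhood of $(t,1,0)\tp$ lies in $N_W(p)$. Therefore $N_W(p)$ is three-dimensional and $p$ is a corner point of $W$.

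I expect the delicate point to be the first step: extracting $w_1=0$ from the bare one-dimensionality of $G$ requires the normal form of Lemma~\ref{lem:matrices2segements} together with the pure-state description of $G$ from Lemma~\ref{lem:pureJNR}; once $A_3\ket{e_1}=0$ is available, the three-dimensionality of $N_W(p)$ reduces to the elementary positive-definiteness check above.
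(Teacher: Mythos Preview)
Your proof is correct. Both your argument and the paper's apply Lemma~\ref{lem:matrices2segements} and then split on whether the last coordinate of the kernel vector of $C$ vanishes (your $\psi_3$, the paper's $z_3$). When it does not vanish, both extract $w_1=0$ from the one-dimensionality of $G$: the paper compresses to $2\times 2$ matrices $B_1,B_3$ via Remark~\ref{rem:faces-JNR}~(b) and uses that $G$ being a segment forces $B_1,B_3$ to commute, while you parametrize $G$ by pure states and read off the same conclusion directly---these are the same computation in slightly different dress. The genuine difference is the endgame: once $A_1,A_2,A_3\in 0\oplus\M_3$, the paper invokes Lemma~\ref{lem:3x3nonexp1}, which uses the nonexposedness of $F_2$ a second time (a $3\times 3$ joint numerical range has no one-dimensional nonexposed faces, so $p\notin W'$). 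You instead verify directly that $N_W(p)$ contains an interior point, using only $\psi_3\neq 0$. Your route is more elementary, bypasses Lemma~\ref{lem:3x3nonexp1} entirely, and reveals that in this configuration the corner-point conclusion does not actually need the nonexposed hypothesis on $F_2$---that hypothesis is only needed at the outset, via Remark~\ref{rem:recurrent-arg}~(a), to produce the non-elliptic face $F_1$.
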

\begin{proof}
The face $F_2$ is an exposed face of a non-elliptic face $F_1$. Thus, the 
assumptions of Lemma~\ref{lem:matrices2segements} are fulfilled and we use 
its results and notation.
\par
Zooming in onto the exposed face $G$, we use a unitary $4\times 4$ matrix $U$
such that the projection $Q$ associated to $G$ is unitarily similar to 
$\id_2\oplus 0=UQU^\ast$. Then by Remark~\ref{rem:faces-JNR}~(b), the hermitian 
$2\times 2$ matrices $B_i:=(UA_iU^\ast)[2]$, $i=1,2,3$, are such that
\[
G=W(B_1,B_2,B_3).
\]
As $Q$ is the kernel projection of $A_2$, the first two columns of $U^\ast$ may 
be taken as the unit vectors $(1,0,0,0)\tp$ and $(0,z_1,z_2,z_3)\tp$. Hence
\[
B_1=\begin{bmatrix}
0 & 0\\
0 & |z_3|^2
\end{bmatrix}
\quad
\text{and}
\quad
B_3=\begin{bmatrix}
0 & z_3 w_1\\
\bar z_3\bar w_1 & \ast
\end{bmatrix}.
\]
If $z_3=0$, then $Q\leq P_1$ implies $G\subset F_1$ as desired. Otherwise, 
if $z_3\neq 0$, then we invoke the fact that $G$ is a segment. This implies that 
$B_1$ and $B_3$ commute. Then $w_1=0$ follows and it implies 
$A_1,A_2,A_3\in 0\oplus\M_3$. Now Lemma~\ref{lem:3x3nonexp1} shows that 
$p=(0,0,0)\tp$ is a corner point of $W$.
\end{proof}
\begin{remark}\label{rem:two1D}
The optimality of Proposition~\ref{pro:2segments} can be seen from examples.
\par
\begin{enumerate}[(a)]
\item
The first disjunct of the disjunctive assertion of Proposition~\ref{pro:2segments} 
fails for the joint numerical range $W$ in Example~\hyperref[ex-7a]{E7a}. This
numerical range has two faces $H_1,H_2$ of droplet shapes. The segment $G_0:=H_1\cap H_2$ 
is an exposed face of $W$ and the other boundary segment $G_1$ of $H_1$ is a nonexposed 
face of $W$. As $G_0$ and $G_1$ intersect at a nonexposed points, this point cannot be a 
corner point.
\item
The second disjunct fails for certain joint numerical ranges $W$ having one face 
of droplet shape. Let $W'$ be the three-dimensional joint numerical range of three 
hermitian $3\times 3$ matrices, such that $W'$ has exactly one face of elliptic 
shape, say $E$, but $W'$ is not a pyramid based on $E$, see 
\cite[Example~6.2]{Szymanski-etal2018}. Let $W$ be the convex hull of $W'$ and a 
point $p$ in the affine hull of $E$ but outside $W'$. The convex hull $H$ of $p$ 
and $E$ is a face of $W$ of droplet shape, whose two boundary segments $G_1,G_2$ 
are nonexposed faces of $W$. Still, $W$ has a whole family of one-dimensional 
exposed faces containing $p$ that are not included in $H$.
\item 
The assertion of Proposition~\ref{pro:2segments} cannot be strengthened to 
an exclusive disjunction. This can be seen from Example~\hyperref[ex-7b]{E7b}
(or \hyperref[ex-9]{E9}).
This numerical range has two faces $H_1,H_2$ of droplet shapes. The segment 
$G_0:=H_1\cap H_2$ is an exposed face of $W$. The boundary segment $G_i$ of $H_i$ 
that differs from $G_0$ is a nonexposed face of $W$, $i=1,2$, and the 
segments $G_0,G_1,G_2$ intersect at a point $p$. Theorem~\ref{thm:3segments} 
shows that $p$ is a corner point of $W$, thereby proving the first disjunct. 
As $G_0$ is included in the unique non-elliptic face $H_1$ that includes $G_1$,
the second disjunct is also true.
\item 
Matrices of higher order (i.e.\ $n\geq 5$) are excluded from the statement of 
Proposition~\ref{pro:2segments} by Example~\ref{ex:3segments}.
\end{enumerate}
\end{remark}
\begin{proposition}\label{pro:corner2non-ell}
Suppose $G_1,G_2$ are distinct one-dimensional nonexposed faces of $W$ that are 
included in distinct non-elliptic faces of $W$, and let $G$ be a one-dimensional 
exposed face of $W$. Suppose $G_1\cap G_2\cap G\neq\emptyset$. Then 
$G_1\cap G_2\cap G=\{p\}$ is a corner point of $W$.
\end{proposition}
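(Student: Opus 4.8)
The plan is to reduce, through a short setup and a couple of case splits, to previously established results. By Remark~\ref{rem:recurrent-arg}~(a), each $G_i$ is an exposed face of a unique non-elliptic face $H_i$ of $W$ ($i=1,2$), so the hypothesis that $G_1,G_2$ lie in distinct non-elliptic faces says precisely $H_1\neq H_2$. Since $G_1\neq G_2$ are distinct one-dimensional faces, their intersection is a proper face of each, hence an extreme point of $W$ by Remark~\ref{rem:recurrent-arg}~(b); as $\emptyset\neq G_1\cap G_2\cap G\subseteq G_1\cap G_2$, this extreme point $p$ lies in $G$ and $G_1\cap G_2\cap G=\{p\}$. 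By Lemma~\ref{lem:int2non-elliptic}, $S:=H_1\cap H_2$ is a one-dimensional exposed face of $W$, and $p\in S$ because $p\in G_i\subseteq H_i$.

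First I would apply Proposition~\ref{pro:2segments} to the nonexposed face $G_1$ together with the exposed face $G$ (they meet at $p$ and $G\neq G_1$ since $G$ is exposed and $G_1$ is not), and likewise to $G_2$ and $G$. Each application shows that either $p$ is a corner point of $W$ --- in which case we are done --- or $G$ is contained in the relevant non-elliptic face. So it remains to treat the case $G\subseteq H_1$ and $G\subseteq H_2$; then $G\subseteq H_1\cap H_2=S$, and since a one-dimensional face cannot be properly contained in another one-dimensional face, $G=S=H_1\cap H_2$.

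In this remaining case I would pass to the matrix normal form. Lemma~\ref{lem:matrices2segements}, applied with $F_1:=H_1$, $F_2:=G_1$ and the exposed segment $G$, puts $A_1,A_2,A_3$ (up to a unitary similarity and an invertible affine transformation) into the form \eqref{eq:custMat}, with $p=(0,0,0)^\top$, with $\diag(1,1,1,0)$ the projection associated to $H_1$, and with the projection $Q$ associated to $G$ equal to the kernel projection of $A_2$; in particular $\ket{e_1}\in\range(Q)$. It suffices to prove $w_1=0$ in \eqref{eq:custMat}: then the first row and column of each $A_i$ vanish, so $A_1,A_2,A_3\in\C P\oplus P'\M_4P'$ for $P:=\diag(1,0,0,0)$ with $A_iP=0$, and Lemma~\ref{lem:3x3nonexp1} --- applicable because $W$ has the one-dimensional nonexposed face $G_1$ --- yields that $p$ is a corner point. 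Suppose $w_1\neq0$. Since $H_2$ is exposed, Theorem~\ref{thm:PF-iso} gives $t_1,t_2,t_3\in\R$ such that the rank-three projection $\Pi_2$ associated to $H_2$ is the spectral projection of $M:=t_1A_1+t_2A_2+t_3A_3$ at its smallest eigenvalue $\lambda$, of multiplicity three; thus $M-\lambda\id_4$ is rank-one positive semidefinite with range $\range(\Pi_2)^\perp$. From $G=H_1\cap H_2\subseteq H_2$ we get $Q\leq\Pi_2$, hence $\ket{e_1}\in\range(\Pi_2)$, so $\range(M-\lambda\id_4)\perp\ket{e_1}$; the vanishing of the first row of $M-\lambda\id_4$ forces $\lambda=(M)_{11}=0$, so $\range(\Pi_2)=\ker M$. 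As $H_2\neq H_1$, $\range(\Pi_2)\neq\range(\diag(1,1,1,0))=\ket{e_4}^\perp$, which together with $\range(\Pi_2)^\perp$ being a line orthogonal to $\ket{e_1}$ gives $\Pi_2\ket{e_4}\notin\C\ket{e_1}$. Hence $\Pi_2A_3\Pi_2\ket{e_1}=\Pi_2A_3\ket{e_1}=\bar w_1\,\Pi_2\ket{e_4}\notin\C\ket{e_1}$, so $\ket{e_1}$ is not an eigenvector of the compression $\Pi_2A_3\Pi_2$. On the other hand, $H_2$ has the two distinct one-dimensional faces $G$ and $G_2$, so it is a droplet or a triangle, and $\{p\}=G\cap G_2$ is the vertex where these faces meet --- a corner point of $H_2=W_{\Pi_2\M_4\Pi_2}(\Pi_2A_1\Pi_2,\Pi_2A_2\Pi_2,\Pi_2A_3\Pi_2)$. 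By Proposition~\ref{pro:corner-points} (via the $3\times3$ presentation of $H_2$ from Remark~\ref{rem:faces-JNR}), this corner point, which corresponds to the state $\ket{e_1}\bra{e_1}$, forces $\ket{e_1}$ to be an eigenvector of each $\Pi_2A_i\Pi_2$, a contradiction. Therefore $w_1=0$.

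The main obstacle is precisely this last case $G=H_1\cap H_2$: here neither Lemma~\ref{lem:3exp-segments} (only one of $G,G_1,G_2$ is exposed) nor Proposition~\ref{pro:2segments} (its second alternative holds trivially) settles the matter, and one must exploit the special structure of $4\times4$ matrices via Lemmas~\ref{lem:matrices2segements} and~\ref{lem:3x3nonexp1}; the delicate point within that is extracting $w_1=0$ from the configuration of the two non-elliptic faces $H_1,H_2$ through $p$.
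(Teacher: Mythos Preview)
Your proof is correct and follows the paper's route: reduce via Proposition~\ref{pro:2segments}, normalize the matrices via Lemma~\ref{lem:matrices2segements}, use that $p$ is a corner point of $H_2$ together with Proposition~\ref{pro:corner-points} to force $w_1=0$, and finish with Lemma~\ref{lem:3x3nonexp1}. The only differences are cosmetic: you apply Proposition~\ref{pro:2segments} twice to obtain the slightly stronger reduction $G=H_1\cap H_2$, and you argue abstractly with the projection $\Pi_2$ (via $\Pi_2 A_3\Pi_2\ket{e_1}=\bar w_1\,\Pi_2\ket{e_4}\notin\C\ket{e_1}$) rather than, as the paper does, explicitly simplifying $A_2$, identifying $\Pi_2$ as the kernel projection of $A_2-\lambda A_1$, and computing the compressed $3\times3$ matrices $B_i$ in a concrete orthonormal basis.
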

\begin{proof}
Let $H_i$ denote the non-elliptic face including $G_i$, $i=1,2$. If $G$ is not 
included in $H_1$, then $p$ is a corner point of $W$ by Proposition~\ref{pro:2segments}.
\par
Let $G\subset H_1$. We use results and notation from 
Lemma~\ref{lem:matrices2segements} assuming $F_1:=H_1$ and $F_2:=G_1$. The 
projection associated to $H_1$ is the kernel projection $P_1$ of 
$A_1=\diag(0,0,0,1)$. The projection associated to $G$ is the kernel 
projection $Q$ of $A_2$, which is a positive semidefinite matrix of rank two, 
and which simplifies (because of $G\subset H_1$) to 
\[
A_2=0\oplus
\begin{bmatrix}
\ketbra{\psi} & z\ket{\psi}\\
\bar z\bra{\psi} & a
\end{bmatrix},
\]
where $\ket{\psi}\in\C^2$ is a unit vector, $a>0$, and $z\in\C$ satisfies 
$|z|<\sqrt{a}$.
\par
One easily finds that the projection $R$ associated to $H_2$ is the kernel projection of 
\[
A:=A_2-\lambda A_1
=0\oplus
\begin{bmatrix}
\ketbra{\psi} & z\ket{\psi}\\
\bar z\bra{\psi} & |z|^2
\end{bmatrix},
\]
where $\lambda=a-|z|^2$ is chosen so $A_2-\lambda A_1$ becomes a positive semidefinite matrix 
of rank one. 
\par
Similarly to Proposition~\ref{pro:2segments}, we use a unitary $4\times 4$ matrix $U$
such that $R$ is unitarily similar to $\id_3\oplus 0=URU^\ast$. Then the 
hermitian $3\times 3$ matrices $B_i:=(UA_iU^\ast)[3]$, $i=1,2,3$, yield
\[
H_2=W(B_1,B_2,B_3).
\]
Taking the first three columns of $U^\ast$ as the orthonormal vectors
\begin{align*}
\ket{1}&:=(1,0,0,0)\tp,\\
\ket{2}&:=0\oplus\ket{\psi^\perp}\oplus 0,\\
\ket{3}&:=(0\oplus (-z)\ket{\psi}\oplus1)/\sqrt{1+|z|^2}, 
\end{align*}
where $\ket{\psi^\perp}\in\C^2$ is a unit vector orthogonal to $\ket{\psi}$, 
one has
\[
B_1=
\begin{bmatrix}
0 & 0 & 0\\
0 & \ast & \ast\\
0 & \ast & \ast
\end{bmatrix},
\quad
B_2=
\begin{bmatrix}
0 & 0 & 0\\
0 & \ast & \ast\\
0 & \ast & \ast
\end{bmatrix},
\quad
B_3=\frac{1}{\sqrt{1+|z|^2}}
\begin{bmatrix}
0 & 0 & w_1\\
0 & \ast & \ast\\
w_1 & \ast & \ast
\end{bmatrix}.
\]
Finally, the assumption that $G$ and $G_2$ intersect at $p=(0,0,0)\tp$ 
takes effect. It shows that $p$ is a corner point of $H_2$. Therefore, 
$B_1,B_2,B_3\in 0\oplus\M_2$ holds by Proposition~\ref{pro:corner-points}, 
which implies $w_1=0$, hence $A_1,A_2,A_3\in 0\oplus\M_3$. Now 
Lemma~\ref{lem:3x3nonexp1} shows that $p$ is a corner point of $W$.
\end{proof}
\begin{lemma}\label{lem:3nonexp-segments}
The intersection of any three mutually distinct one-dimensional 
nonexposed faces of $W$ is empty.
\end{lemma}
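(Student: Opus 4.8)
The plan is a proof by contradiction. Suppose $G_1,G_2,G_3$ are three mutually distinct one-dimensional nonexposed faces of $W$ with $G_1\cap G_2\cap G_3\neq\emptyset$. First I would apply Remark~\ref{rem:recurrent-arg}~(b): the common intersection is a single extreme point $p$, so $p\in G_i$ for $i=1,2,3$. Next, Remark~\ref{rem:recurrent-arg}~(a) attaches to each $G_i$ the \emph{unique} two-dimensional non-elliptic face $H_i$ of $W$ of which $G_i$ is a one-dimensional face; thus $p\in G_i\subset H_i$ for every $i$, and recall that every non-elliptic face is a rank-3 face of $W$.

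The engine of the argument is the elementary planar fact that a two-dimensional compact convex set has at most two one-dimensional faces through any single point of it: its boundary is a convex Jordan curve, and a one-dimensional face through a point agrees, near that point, with one of the two boundary arcs emanating from it. With this in hand I would distinguish two cases. If some two of $H_1,H_2,H_3$ coincide, then not all three can coincide --- otherwise $G_1,G_2,G_3$ would be three distinct one-dimensional faces of a single two-dimensional face $H_1$ through $p$ --- so after relabeling $H_1=H_2\neq H_3$. Then $G_1,G_2$ are two distinct one-dimensional faces of $H_1$ through $p$, hence the only such faces; but by Lemma~\ref{lem:int2non-elliptic} the set $H_1\cap H_3$ is a one-dimensional exposed face of $W$, and it is a face of $H_1$ passing through $p$, so it must coincide with $G_1$ or with $G_2$ --- impossible, since $H_1\cap H_3$ is exposed whereas $G_1,G_2$ are not. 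If instead $H_1,H_2,H_3$ are pairwise distinct, then by Lemma~\ref{lem:int2non-elliptic} both $G_{12}:=H_1\cap H_2$ and $G_{13}:=H_1\cap H_3$ are one-dimensional exposed faces of $W$, each a face of $H_1$ through $p$; they are distinct, because $G_{12}=G_{13}$ would force this common segment into $H_1\cap H_2\cap H_3$, a singleton by Lemma~\ref{lem:int3non-elliptic}; and each of them differs from the nonexposed face $G_1$. Hence $G_1,G_{12},G_{13}$ are three distinct one-dimensional faces of $H_1$ through $p$, contradicting the planar fact above.

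I do not expect a deep obstacle: once the enclosing faces $H_i$ are in place, everything reduces to counting one-dimensional faces of a planar convex body through a single point and to invoking Lemmas~\ref{lem:int2non-elliptic} and~\ref{lem:int3non-elliptic}. The points that demand the most care are a clean formulation of the ``at most two edges through a point'' statement for two-dimensional convex sets, and checking that the case split (two of the $H_i$ equal versus all three distinct) is genuinely exhaustive, including the degenerate possibility that all three $H_i$ coincide.
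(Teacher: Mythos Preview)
Your argument is correct, and it takes a genuinely different route from the paper's proof. The paper organizes the case split exactly as you do (all $H_i$ equal; exactly two equal; all distinct) but then resolves each case by counting the \emph{total} number of one-dimensional faces of the $H_i$'s and invoking the Kippenhahn classification of non-elliptic faces: three distinct segments in one $H_i$ force it to be a triangle, and in the third case this makes every $H_i$ a triangle, whence Lemma~\ref{lem:not0003} says $W$ is a tetrahedron, contradicting the existence of nonexposed faces. Your proof instead localizes everything at the putative common point $p$ and leans on the purely planar fact that a two-dimensional convex body admits at most two one-dimensional faces through any given boundary point, together with Lemmas~\ref{lem:int2non-elliptic} and~\ref{lem:int3non-elliptic}. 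This is more elementary: it bypasses both the shape classification and Lemma~\ref{lem:not0003} entirely. What the paper's approach buys is the extra structural information (the $H_i$'s are forced to be droplets or triangles), which is natural in context but unnecessary for this lemma; what your approach buys is a shorter, self-contained contradiction that would work for any convex body whose rank-$3$ faces obey the two intersection lemmas.

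Two small points worth tightening in a final write-up: state the ``at most two one-dimensional faces through a point'' claim crisply (e.g., one-dimensional faces are maximal boundary segments, and a boundary Jordan curve has only two germs at each point), and in Case~1 note explicitly that $p\in H_1\cap H_3$ because $p\in G_1\subset H_1$ and $p\in G_3\subset H_3$.
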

\begin{proof}
Let $G_1,G_2,G_3$ be mutually distinct one-dimensional nonexposed faces of 
$W$ and let $H_i$ denote the non-elliptic face of $W$ that includes $G_i$,
$i=1,2,3$. We distinguish three cases. 
\par
First, if $H_1=H_2=H_3$ then $H_1$ is a triangle and the intersection 
$G_1\cap G_2\cap G_3$ of its three sides is empty. 
\par
Second, if exactly two of $H_1,H_2,H_3$ are equal, say $H_1\neq H_2=H_3$,
then  by Lemma~\ref{lem:int2non-elliptic}, $S:=H_1\cap H_2$ is 
a one-dimensional face of $H_1$ and of $H_2$, and an exposed face of $W$. 
As the $G_i$'s are faces of $H_1$ and of $H_2$, too, the number of 
one-dimensional faces of $H_i$, summed over $i=1,2$, is at least five 
(two for $S$ counted twice plus three for the $G_i$'s). This forces one 
of the $H_i$'s, say $H_1$, to be a triangle and the other, 
say $H_2$, to be a droplet or a triangle. In either case, see the following 
figure, we have $G_1\cap G_2\cap G_3=\emptyset$.

{\centering
\begin{tikzpicture}
 \coordinate (1) at (-1.732,1); 
 \coordinate (2) at (0,0); 
 \coordinate (3) at (1.732,1); 
 \coordinate (4) at (0,2); 
 \path[draw,thick,dashed] (4) -- (1) -- (2) -- (3);
 \draw[thick,dashed] ([shift=(-60:1.155cm)]1.155,2) arc (-60:180:1.155cm);
 \path[draw,thick] (2) -- (4);
 \node at ([shift=({-.6,0})]$(2)!0.5!(4)$) {$H_1$};
 \node at (1.155,2) {$H_2$};
\end{tikzpicture}
\qquad\qquad
\begin{tikzpicture}
 \coordinate (1) at (-1.732,1); 
 \coordinate (2) at (0,0); 
 \coordinate (3) at (1.732,1); 
 \coordinate (4) at (0,2); 
 \path[draw,thick,dashed] (1) -- (2) -- (3) -- (4) -- cycle;
 \path[draw,thick] (2) -- (4);
 \node at ([shift=({-.6,0})]$(2)!0.5!(4)$) {$H_1$};
 \node at ([shift=({.6,0})]$(2)!0.5!(4)$) {$H_2$};
\end{tikzpicture}\par}
\par
Third, let $H_1,H_2,H_3$ be mutually distinct. Then $S_1:=H_2\cap H_3$, 
$S_2:=H_3\cap H_1$, and $S_3:=H_1\cap H_2$ are mutually distinct 
one-dimensional exposed face of $W$ by Lemma~\ref{lem:int2non-elliptic} 
and Lemma~\ref{lem:int3non-elliptic}. The numbers of one-dimensional 
faces of $H_i$, summed over $i=1,2,3$, yield at least nine (six for the 
$S_i$'s counted twice plus three for the $G_i$'s). This proves that 
each of the $H_i$'s is a triangle. Now Lemma~\ref{lem:not0003} shows 
that $W$ is a tetrahedron, which has no nonexposed faces, a contradiction.
\end{proof}
\begin{theorem}\label{thm:3segments}
If the intersection of three mutually distinct one-dimensional faces of $W$ is nonempty, 
then this intersection is a corner point of $W$.
\end{theorem}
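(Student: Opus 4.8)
The plan is to perform a case analysis according to how many of the three given one-dimensional faces $G_1,G_2,G_3$ are exposed, feeding each case into the results already established in this section. First I would record two preliminary observations: by Remark~\ref{rem:recurrent-arg}~(b) the nonempty intersection $G_1\cap G_2\cap G_3$ is a single extreme point $p$ of $W$, which is an endpoint of each segment $G_i$; and by Lemma~\ref{lem:3nonexp-segments} the three faces cannot all be nonexposed, so at least one of them is exposed. Hence only two cases can occur, distinguished by whether at least two of the $G_i$ are exposed or exactly one is.

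If at least two of $G_1,G_2,G_3$ are exposed, Lemma~\ref{lem:3exp-segments} applies immediately and gives that $p$ is a corner point of $W$. Suppose instead that exactly one of them, say $G:=G_3$, is exposed, so $G_1$ and $G_2$ are nonexposed. By Remark~\ref{rem:recurrent-arg}~(a), each $G_i$ is an exposed face of a unique non-elliptic face $H_i$ of $W$, $i=1,2$. If $H_1\neq H_2$, then Proposition~\ref{pro:corner2non-ell} applies verbatim (with this $G_1,G_2,G$) and $p$ is again a corner point.

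The one remaining case, $H_1=H_2=:H$, is the crux of the argument and the step I expect to be the main obstacle, since it is covered by neither Proposition~\ref{pro:2segments} nor Proposition~\ref{pro:corner2non-ell} alone. Here $G_1$ and $G_2$ are two distinct one-dimensional faces of the single non-elliptic face $H$, both containing $p$; hence $H$ has at least two one-dimensional faces and, by Table~\ref{tab:types}, is a droplet or a triangle, with $p=G_1\cap G_2$ being the tip of the droplet, respectively a vertex of the triangle. I would then apply Proposition~\ref{pro:2segments} to the nonexposed face $G_1$ and the exposed face $G$: either $p$ is a corner point and we are done, or $G\subset H$. In the latter case $G$ is a one-dimensional face of $H$ distinct from $G_1$ and $G_2$ and passing through $p$. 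But a droplet has exactly two one-dimensional faces, namely $G_1$ and $G_2$, and a triangle has exactly three edges, of which only $G_1$ and $G_2$ contain the vertex $p$ (the third edge lies opposite $p$). Either way there is no room for such a $G$, a contradiction; so $p$ must be a corner point, completing the proof. The real content of this last step is thus to combine the dichotomy of Proposition~\ref{pro:2segments} with the explicit face structure of droplets and triangles in order to exclude the alternative $G\subset H$.
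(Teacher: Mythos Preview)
Your proof is correct and follows essentially the same approach as the paper's: the same case split on the number of exposed faces, the same appeal to Lemma~\ref{lem:3exp-segments}, Lemma~\ref{lem:3nonexp-segments}, Proposition~\ref{pro:corner2non-ell}, and Proposition~\ref{pro:2segments}, and in the residual case $H_1=H_2$ the same idea of ruling out $G\subset H$ via the face structure of droplets and triangles. The only cosmetic difference is that the paper first argues $G_3\subset H_1$ is impossible (forcing $H_1$ to be a triangle whose three sides have empty common intersection) and then invokes Proposition~\ref{pro:2segments}, whereas you first invoke the proposition and then rule out the alternative $G\subset H$ by the same elementary geometry; the content is identical.
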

\begin{proof}
Let $G_1,G_2,G_3$ denote the one-dimensional faces in question. Then $G_1\cap G_2\cap G_3=\{p\}$ 
where $p$ is an extreme point of $W$ (this is the statement of 
Remark~\ref{rem:recurrent-arg}~(b)).
\par
By Lemma~\ref{lem:3exp-segments} and Lemma~\ref{lem:3nonexp-segments},
the claim is true unless exactly two of the faces $G_1,G_2,G_3$ are
nonexposed.
\par
Let $G_1,G_2$ be one-dimensional nonexposed faces, let $H_i$ be the 
non-elliptic face including $G_i$, $i=1,2$, and let $G_3$ be a 
one-dimensional exposed face of $W$. First, let $H_1=H_2$. If $G_3\subset H_1$,
then $H_1$ is a triangle and the intersection $G_1\cap G_2\cap G_3$ of its 
three sides is empty. Since $G_3\subset H_1$ fails, Proposition~\ref{pro:2segments} 
shows that $p$ is a corner point of $W$. Second, if $H_1\neq H_2$ then $p$ is 
a corner point of $W$ by Proposition~\ref{pro:corner2non-ell}.
\end{proof}
%
%
\section{On elliptic faces}
\label{sec:elliptic-faces}
Let $A_1,A_2,A_3$ be hermitian $4\times 4$ matrices. We collect observations 
on elliptic faces of the joint numerical range $W=W(A_1,A_2,A_3)$. We assume 
$\dim(W)=3$.
\par
\begin{lemma}\label{lem:int-non-elliptic-elliptic}
If an elliptic face of $W$ differs from a rank-3 face of $W$\!, then their 
intersection is an exposed point of $W$\!, and hence an exposed point of that rank-3 face.
\end{lemma}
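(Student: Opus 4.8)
The plan is to show, via the projection correspondence of Theorem~\ref{thm:PF-iso}, that the intersection $S:=E\cap G$ of the elliptic face $E$ and the rank-3 face $G$ (with $E\neq G$) is nonempty and consists of a single boundary point of $E$, then that both $E$ and $G$ are exposed faces of $W$, so that $S$ is an exposed point of $W$, and finally that this passes to $G$ for free. First I would record the nonemptiness. Let $P$ and $Q$ be the orthogonal projections associated to $E$ and $G$, so $w|_{\cD_4}^{-1}(E)=\cD_{P\M_4P}$ and $w|_{\cD_4}^{-1}(G)=\cD_{Q\M_4Q}$. Since $G$ is a rank-3 face, $\rk(Q)=3$; since $E$ is two-dimensional it is not a single point, so $\rk(P)\geq 2$. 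Hence $\rk(P)+\rk(Q)\geq 5>4$, so $\range(P)\cap\range(Q)$ is a nonzero subspace; let $R$ be the projection onto it. A density matrix belongs to $\cD_{P\M_4P}\cap\cD_{Q\M_4Q}$ if and only if its range lies in $\range(P)\cap\range(Q)=\range(R)$, so $w|_{\cD_4}^{-1}(S)=\cD_{R\M_4R}\neq\emptyset$ and therefore $S\neq\emptyset$. Now $S$ is a face of $W$ (the infimum of $E$ and $G$ in the face lattice) contained in the face $E$, hence a face of the elliptic disc $E$; by strict convexity the only such faces are $\emptyset$, the individual boundary points, and $E$ itself. We have ruled out $\emptyset$, and $S=E$ would give $E\subseteq G$ with $\dim(E)=2$ and $\dim(G)\leq 2$ (as $G\subsetneq W$ and $\dim W=3$), forcing $G=E$, a contradiction. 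So $S=\{p\}$ for some $p\in\partial E$.

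Next I would argue that $E$ and $G$ are exposed, whence $\{p\}=E\cap G$ is exposed. The face $E$ is exposed because it is a proper face of the three-dimensional body $W$ with $\dim(E)=2$: by the separation argument recalled below \eqref{eq:access-F}, $E$ is contained in an exposed face $F'\subsetneq W$, and $\dim(F')\leq 2=\dim(E)$ forces $F'=E$. The face $G$ is exposed by a rank count: take a minimal, hence strictly decreasing, access sequence of faces $W=F_0\supsetneq F_1\supsetneq\dots\supsetneq F_\ell=G$ and let $\id_4=P_0> P_1>\dots> P_\ell$ be the corresponding access sequence of projections from Theorem~\ref{thm:PF-iso}; the ranks strictly decrease from $\rk(P_0)=4$ to $\rk(P_\ell)=\rk(Q)=3$, which is possible only for $\ell=1$, so $G=F_1$ is an exposed face of $W$. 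Finally, writing $E=W\cap\{x:\langle x,u\rangle=\alpha\}$ and $G=W\cap\{x:\langle x,v\rangle=\beta\}$ with $\alpha=\min_W\langle\cdot,u\rangle$ and $\beta=\min_W\langle\cdot,v\rangle$, one has $\langle x,u+v\rangle\geq\alpha+\beta$ on $W$ with equality exactly on $E\cap G=\{p\}$, so $\{p\}$ is the exposed face of $W$ cut out by $u+v$: $p$ is an exposed point of $W$. A hyperplane exposing $\{p\}$ in $W$ also supports $G\subseteq W$ and meets it exactly in $\{p\}$, so $p$ is an exposed point of $G$ as well.

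The main obstacle, and the step I would be most careful to justify, is the nonemptiness of $S$: two distinct faces of a convex body need not meet in general, and what forces a common point here is precisely the dimension count $\rk(P)+\rk(Q)\geq 5>4$ for the associated projections in $\C^4$, combined with the identity $\cD_{P\M_4P}\cap\cD_{Q\M_4Q}=\cD_{R\M_4R}$. The other point worth spelling out is the exposedness of the rank-3 face $G$ via strictly decreasing ranks along an access sequence; this is what lets us avoid invoking Lemma~\ref{lem:int2non-elliptic}, whose hypothesis presupposes the existence of a non-elliptic face. Everything else (faces of an ellipse, intersection of exposed faces being exposed, inheritance of exposedness by subfaces) is routine.
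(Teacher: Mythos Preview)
Your proof is correct and follows the same overall line as the paper: establish nonemptiness of $E\cap G$ via the dimension count $\rk(P)+\rk(Q)\geq 5>4$ on the associated projections, show that both faces are exposed, conclude that the intersection is an exposed face, and use strict convexity of the ellipse to pin it down as a singleton.

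There is one genuine difference worth noting. The paper asserts that ``by definition, the faces $F$ and $G$ have dimension two'' and deduces exposedness of $G$ from that; but while an elliptic face is $2$-dimensional by definition, a rank-$3$ face is not obviously so. Your access-sequence argument (strictly decreasing ranks from $4$ to $3$ force $\ell=1$) sidesteps this and proves exposedness of $G$ directly from $\rk(Q)=3$, without ever needing $\dim(G)=2$. This is a cleaner justification than the paper gives at that step, and your proof is otherwise just a more explicit version of the paper's.
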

\begin{proof}
Let $F$ be a face of $W$ that is an ellipse and let $G$ be a rank-3 face. Let $P$ (resp., $Q$) 
denote the projection associated to $F$ (resp., $G$) in Theorem~\ref{thm:PF-iso}. Then $P$ has 
rank two or three and $Q$ has rank three. Hence the intersection of the ranges of $P$ and $Q$
is at least one-dimensional, and consequently the faces $F$ and $G$ have a 
non-empty intersection.
\par
Since $F$ and $G$ have dimension two, they are exposed faces of $W$ and so is their 
intersection. As $F$ is an ellipse, this intersection is a singleton, hence an 
exposed point of $W$. \emph{A fortiori}, this point is an exposed point of $G$.
\end{proof}
\begin{remark}
In view of Lemma~\ref{lem:int-non-elliptic-elliptic}, one may ask whether an 
elliptic face of $W$ can intersect the endpoints of a one-dimensional face of 
a non-elliptic face. This is impossible for a type-$1$ non-elliptic face (loaf), 
as the endpoints of its one-dimensional face are nonexposed points.

An elliptic face of $W$ can intersect the vertex of a non-elliptic droplet face.
An example is the joint numerical range (see Figure~\ref{fig:rem6.2}~(a)) of the matrices 
\[
\begin{bmatrix}
 0 &    1\\   1 &  0
\end{bmatrix}
\oplus
\begin{bmatrix}
 1 &    0\\   0 &  1    
\end{bmatrix},
\begin{bmatrix}
 0 & -\ii\\ \ii &  0   
\end{bmatrix}
\oplus
\begin{bmatrix}
 0 &    1\\   1 &  0  
\end{bmatrix},
\begin{bmatrix}
-2 &    0\\   0 & -2    
\end{bmatrix}
\oplus
\begin{bmatrix}
 0 & -\ii\\ \ii &  0   
\end{bmatrix},
\]
which is the convex hull of the unit circle in the $x$-$y$-plane 
translated by $-2$ in $z$-direction and the unit circle in the
$y$-$z$-plane translated by $1$ in $x$-direction.
\par

\noindent
\quad An elliptic face of $W$ can also intersect the vertex of a triangular
face. An example is the joint numerical range (see Figure~\ref{fig:rem6.2}~(b)) of the matrices
\[
\begin{bmatrix}
 0 &    1\\   1 &  0   
\end{bmatrix}
\oplus
\begin{bmatrix}
 1 &    0\\   0 & -1   
\end{bmatrix},
\begin{bmatrix}
 0 & -\ii\\ \ii &  0   
\end{bmatrix}
\oplus
\begin{bmatrix}
 0 &    0\\   0 &  0
\end{bmatrix},
\quad
\begin{bmatrix}
 0 &    0\\   0 &  0   
\end{bmatrix}
\oplus
\begin{bmatrix}
 1 &    0\\   0 &  1   
\end{bmatrix},
\]
which is the convex hull of the unit circle in the $x$-$y$-plane 
and the points $(1,0,1)\tp$, $(-1,0,1)\tp$.

\end{remark}
\begin{figure}[hbt]\centering
(a) \includegraphics[width=3.2cm]{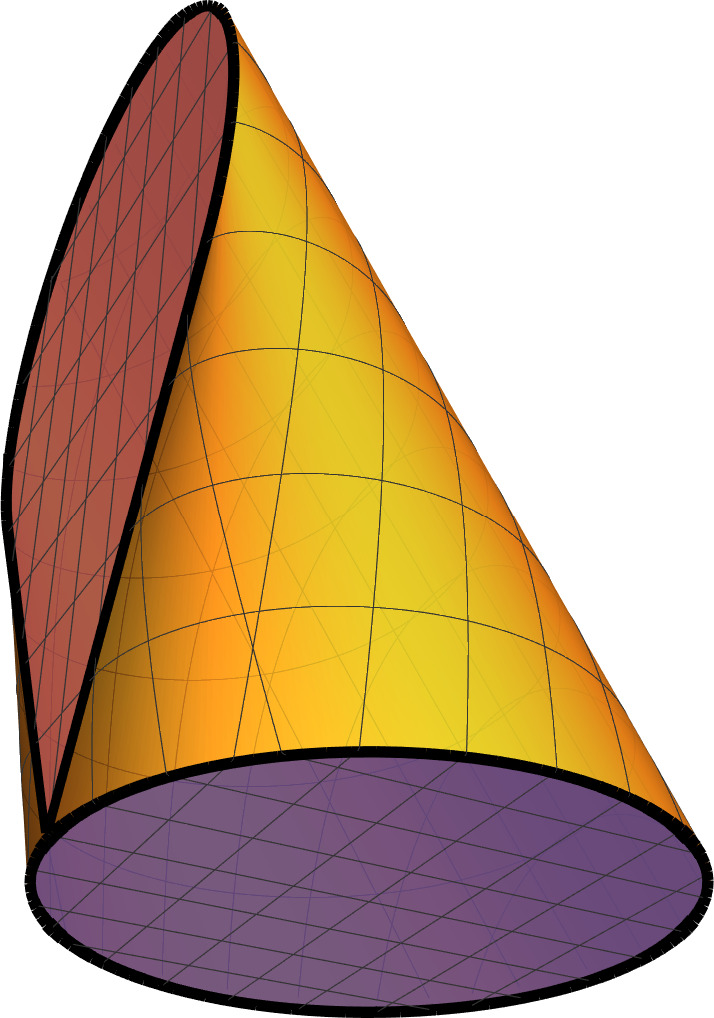}\qquad
(b) \includegraphics[width=3.2cm]{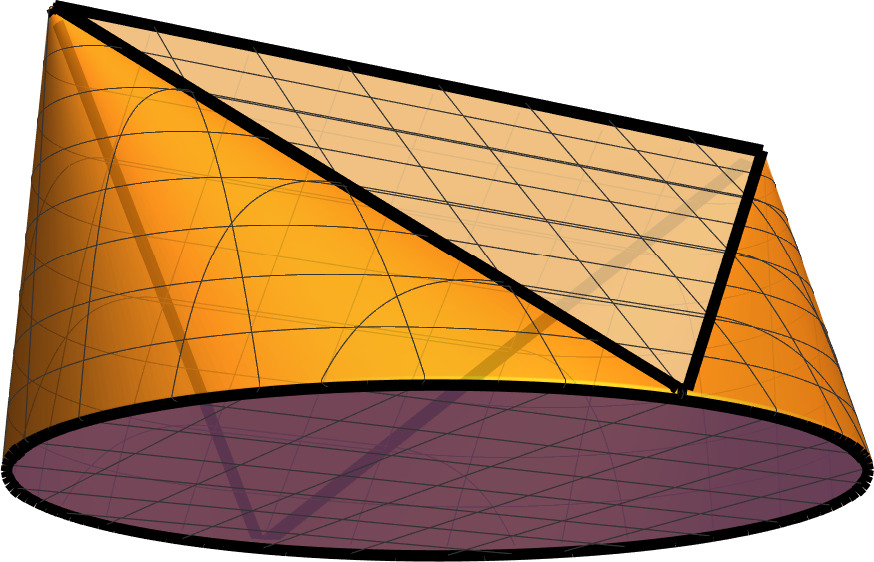}
\caption{The joint numerical ranges showing adjacency of elliptic faces and exposed points of non-elliptic ones.}\label{fig:rem6.2}
\end{figure}
To put this paper into a broader context, we quote the work \cite{Ottem-etal2015} 
by Ottem et al.\ and we sketch a proof as to why their results imply that 
$W=W(A_1,A_2,A_3)$ has generically an even number of at most ten faces of rank two
that are elliptic discs and no other faces of rank two or three, provided that 
$A_1,A_2,A_3$ are real symmetric $4\times 4$ matrices. 
\par
\begin{remark}[Generic faces are ellipses]\label{rem:Ottem}
Given real symmetric $4\times 4$ matrices 
$\widetilde{A}_0,\widetilde{A}_1,\widetilde{A}_2,\widetilde{A}_3$, the authors of 
\cite{Ottem-etal2015} define a complex projective \emph{symmetroid}
\[
\widetilde{V}=\big\{(u_0:u_1:u_2:u_3)\in\C\P^3: 
{\det}( u_0\widetilde{A}_0+u_1\widetilde{A}_1+u_2\widetilde{A}_2+u_3\widetilde{A}_3 )=0\big\}
\]
and a real projective \emph{spectrahedron}
\[
\widetilde{S}=\big\{(u_0:u_1:u_2:u_3)\in\R\P^3: 
u_0\widetilde{A}_0+u_1\widetilde{A}_1+u_2\widetilde{A}_2+u_3\widetilde{A}_3\geq 0\big\}.
\]
They assume $\dim\widetilde{S}=3$, which implies that the linear span of 
$\widetilde{A}_0,\widetilde{A}_1,\widetilde{A}_2,\widetilde{A}_3$ contains a positive 
definite matrix. Hence, a real projectivity transforms the symmetroid $\widetilde{V}$ 
to the symmetroid
\[
V:=\big\{(u_0:u_1:u_2:u_3)\in\C\P^3: 
\det(u_0\id_4 +u_1A_1 +u_2A_2 +u_3A_3) =0\big\}
\]
and the spectrahedron $\widetilde{S}$ to the spectrahedron 
\[
S:=\big\{(u_0:u_1:u_2:u_3)\in\R\P^3: 
u_0\id_4+u_1A_1+u_2A_2+u_3A_3\geq 0\big\},
\]
where $A_1,A_2,A_3$ are linearly independent real symmetric $4\times 4$ matrices
of trace zero. It follows that $S$ does not intersect the hyperplane $u_0=0$ at 
infinity and so $S$ is a compact convex subset of the affine space $\R^3\cong\{u_0=1\}$ 
that has a well-known geometry \cite{NetzerPlaumann2023}.
\par
Ottem et al.\ \cite{Ottem-etal2015} restrict their analysis to \emph{transversal symmetroids} 
$\widetilde{V}$ and \emph{transversal spectrahedra} $\widetilde{S}$, which means that 
$\widetilde{V}$ has exactly ten points of (matrix) rank two and no other points of rank smaller 
than three (singular points). The symmetroid $\widetilde{V}$ is transversal for all quadruples 
$(\widetilde{A}_0,\widetilde{A}_1,\widetilde{A}_2,\widetilde{A}_3)$ in the complement 
of an algebraic set that is strictly included in the set of all quadruples of real 
symmetric $4\times 4$ matrices. The singularities are characterized by tangent cones: 
If zero is in $\widetilde{V}$ in an affine chart of $\C\P^3$, then the 
\emph{tangent cone} of $\widetilde{V}$ at zero is the lowest degree part of 
${\det}(u_0\widetilde{A}_0+u_1\widetilde{A}_1+u_2\widetilde{A}_2+u_3\widetilde{A}_3)$
in that chart. The tangent cone at a point of rank two is thus a quadratic cone. 
We have the following equivalence:
\begin{align}\label{eq:rank-two}
& \text{(i) the point $u_0\id_4+u_1A_1+u_2A_2+u_3A_3$ has rank two,}\\
& \text{(ii) the tangent cone of $V$ at $u_0\id_4+u_1A_1+u_2A_2+u_3A_3$ is a quadratic cone.}\nonumber
\end{align}
It is shown in \cite{Ottem-etal2015}, Theorem~1.1, that an even number $\sigma$ of the 
ten points of rank two of $\widetilde{V}$ are contained in $\widetilde{S}$, provided 
that $\widetilde{V}$ is transversal.
\par
Since $W=\{x\in\R^3\colon \forall u{\in}\, S, 1+\langle x,u\rangle \geq 0 \}$ 
is the \emph{dual convex set} to $S$, see \cite[Corollary~5.3]{Plaumann-etal2021},
and since the origin $0\in\R^3$ is an interior point of the compact convex set $S$, 
it follows that every exposed face of $W$ is the intersection of the Euclidean boundary 
$\partial W$ of $W$ with the normal cone $N_S(u)$ of $S$ at some point 
$u\in S$, see \cite[Section~8]{Weis2014}. The \emph{dual convex cone} 
$\{v\in\R^3\colon\forall x{\in}\, N_S(u), \langle x,v\rangle\geq 0 \}$ to the normal 
cone $N_S(u)$ is the \emph{support cone} $T_S(u)$, which is defined as the Euclidean 
closure of the convex cone $\bigcup_{\lambda>0}\lambda(S-u)$, see 
\cite[Section~2.2]{Schneider2014}. Thus, up to a translation along $u$, the Euclidean
boundary $\partial T_S(u)$ of $T_S(u)$ is the set of points on the rays that are 
limits of rays emanating from $u$ that intersect $\partial S$ in points that converge 
to $u$. This implies that the \emph{algebraic boundary} of $T_S(u)$, which 
is the smallest complex projective algebraic set that contains $\partial T_S(u)$, is 
the tangent cone of the symmetroid $V$ at $u$, see \cite[Section~2.1.5]{Shafarevich2013}.
Thus, the assertions in \eqref{eq:rank-two} are equivalent to each of the following ones
if $u=(u_1,u_2,u_2)\tp\in S$ are the coordinates of a point in $\R^3\cong\{u_0=1\}$.
\begin{align}\label{eq:rank-two-cont}
& \text{(iii) the support cone $T_S(u)$ is an elliptic convex cone,}\\\nonumber
& \text{(iv) the normal cone $N_S(u)$ is an elliptic convex cone,}\\\nonumber
& \text{(v) the face $N_S(u)\cap\partial W$ of $W$ is an ellipse.}
\end{align}
\par
The preceding two paragraphs show that $W(A_1,A_2,A_3)$ has an even number 
$\sigma\leq 10$ of rank-two faces that are elliptic disks and no other faces of rank 
two or three, provided that $S$ is a transversal spectrahedron. This is the case for 
all $(A_1,A_2,A_3)$ in the complement of an algebraic set that is strictly included 
in the set of all triples of real symmetric $4\times 4$ matrices. In particular, it 
is true for all triples in an open and dense subset in the Euclidean topology.
\end{remark}

To clarify what we mean by obtaining $(\id_4,A_1,A_2,A_3)$ from 
$\widetilde{A}_0,\widetilde{A}_1,\widetilde{A}_2,\widetilde{A}_3$ through a 
real projectivity in Remark~\ref{rem:Ottem} above, we assume for simplicity that 
$A:=\widetilde{A}_0+\widetilde{A}_1+\widetilde{A}_2+\widetilde{A}_3$ is positive definite.
Then the projectivity $(u_0:u_1:u_2:u_3)\mapsto(u_0:u_1-u_0:u_2-u_0:u_3-u_0)$
maps $\widetilde{V}$ to 
\begin{align*}
   & 
\{(u_0:u_1:u_2:u_3)\in\C\P^3:\det(u_0A+u_1\widetilde{A}_1+u_2\widetilde{A}_2+u_3\widetilde{A}_3)=0\}\\
 = \, & 
\{(u_0:u_1:u_2:u_3)\in\C\P^3:\det(u_0\id_4+u_1A_1+u_2A_2+u_3A_3)=0\},
\end{align*}
where $A_i:=A^{-\frac{1}{2}}\widetilde{A}_i A^{-\frac{1}{2}}$, $i=1,2,3$.
Further projectivities allow us to replace $A_1,A_2,A_3$ with traceless matrices.
The matrix $A$ is indeed positive definite and the symmetroid $\widetilde{V}$ 
is transversal in the examples in \cite[Section~2]{Ottem-etal2015}. For these
examples $W(A_1,A_2,A_3)$ has an even number $\sigma\leq 10$ of rank-2 faces 
that are elliptic disks and no other faces of rank two or three.
\par\bigskip
One of the important results of \cite{Ottem-etal2015} is that every transversal spectrahedron
$S$ has an even number $\sigma\leq 10$ of points of rank two and no other points of rank
smaller than three. By Remark~\ref{rem:Ottem}, this can be interpreted by saying that
the joint numerical range $W(A_1,A_2,A_3)$ dual to $S$ has an even number $\sigma\leq 10$
of rank-2 faces that are elliptic disks and no other faces of rank two and three.
However, we found joint numerical ranges with $1,3,5$ elliptic faces --- this is possible
because the dual spectrahedra are not transversal, 
as some elliptical faces are of rank three. Similarly, the spectrahedra dual to the 
Examples~\hyperref[ex-1]{E1}--\hyperref[ex-14]{E14} in Section~\ref{sec:examples} 
above cannot be transversal.
\par

Consider the joint numerical range with five elliptic faces
(shown in Figure~\ref{fig:elliptic}~(a)),
obtained for the following three matrices:
\begin{equation*}\begin{bmatrix}
 0 & 0 & 1 & 0 \\
 0 & 0 & 0 & 0 \\
 1 & 0 & 0 & 0 \\
 0 & 0 & 0 & 0 \end{bmatrix},\begin{bmatrix}
 0 & 0 & 0 & 0 \\
 0 & 0 & 0 & 1 \\
 0 & 0 & 0 & 0 \\
 0 & 1 & 0 & 0 \end{bmatrix},\begin{bmatrix}
 1 & 0 & 0 & 0 \\
 0 & 0 & 1 & 0 \\
 0 & 1 & 0 & 0 \\
 0 & 0 & 0 & 1 \end{bmatrix}
 \end{equation*}
\medskip

In this case the relevant spectrahedron is not transversal,
since the top elliptic face is of rank 3, and can be regarded
as an affine linear image of the numerical range $W(B_1, B_2)$
of the following matrices: 
\[
B_1=\begin{bmatrix}0&0&1\\0&0&0\\1&0&0\end{bmatrix},
\quad B_2=\begin{bmatrix}0&1&0\\1&0&0\\0&0&0\end{bmatrix}.
\]
This illustrates Remark \ref{rem:bordered}: these are bordered matrices, 
resulting in elliptic numerical range.

Without the exact knowledge of the projectors onto subspaces associated with the
elliptic faces, it is not possible to determine whether or not they intersect:
simple reasoning based on dimension counting like in Lemma~\ref{lem:int2non-elliptic}
are indecisive. Compare the following two examples with the same structure of subspaces
dimensions: each of the matrices in both examples has two double eigenvalues
of $-1$ and $+1$, but there are significant differences in geometry of the
joint numerical ranges.

\begin{figure}[htb]\centering
(a) \includegraphics[width=3.2cm]{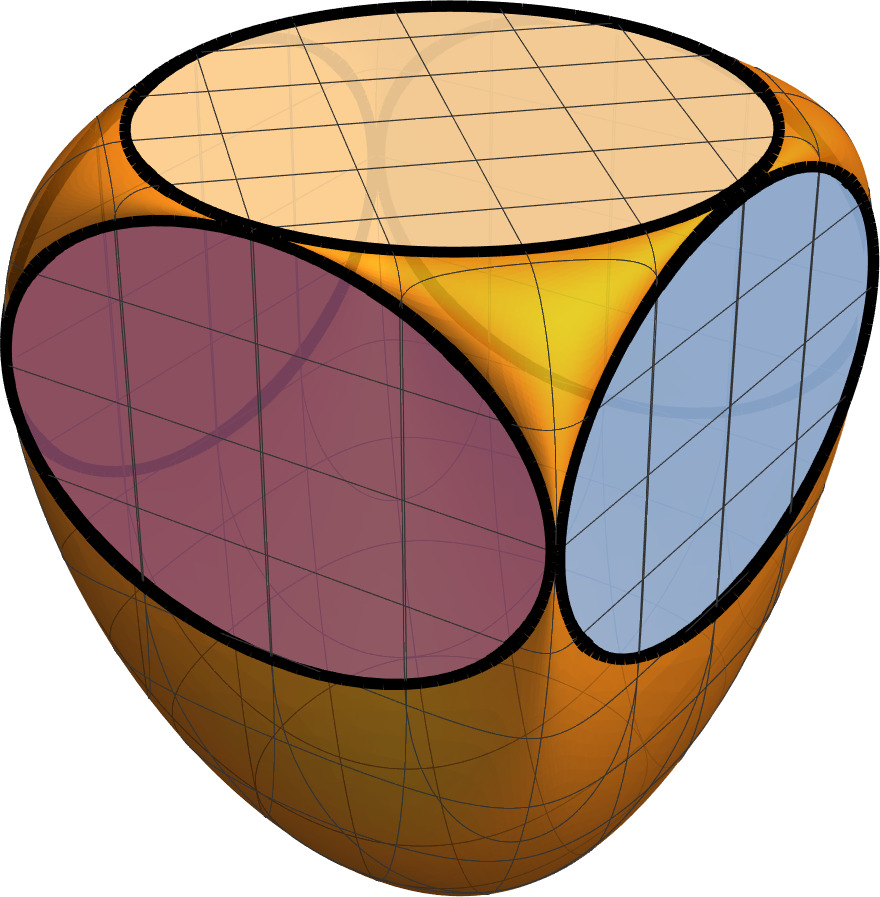}\qquad
(b) \includegraphics[width=3.2cm]{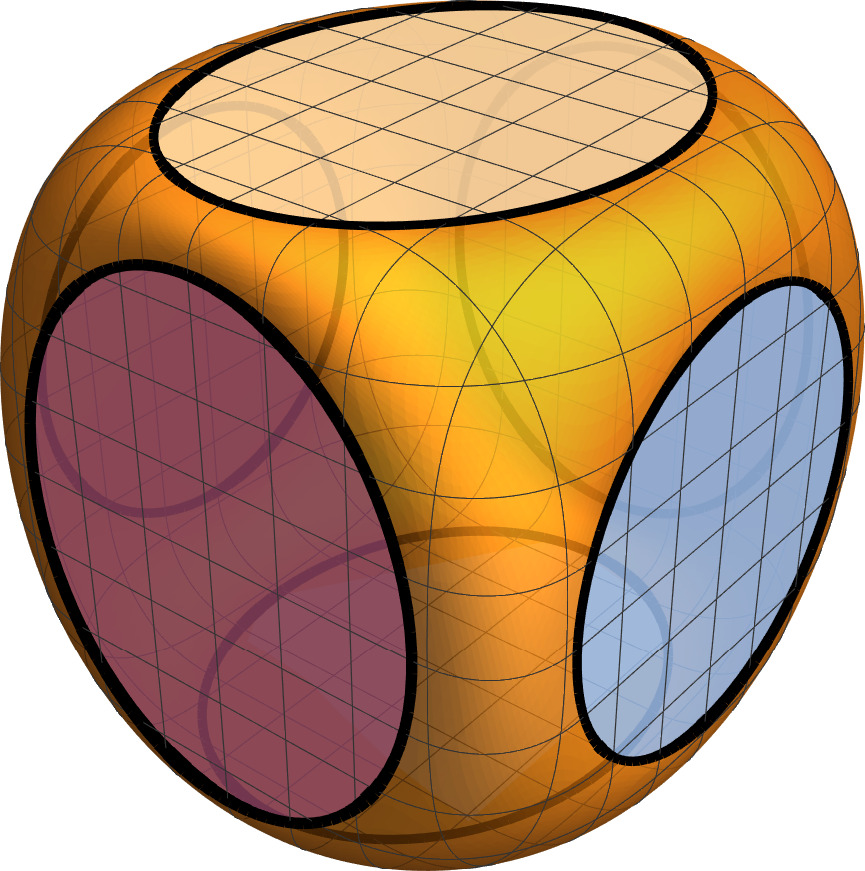}\qquad
(c) \includegraphics[width=3.2cm]{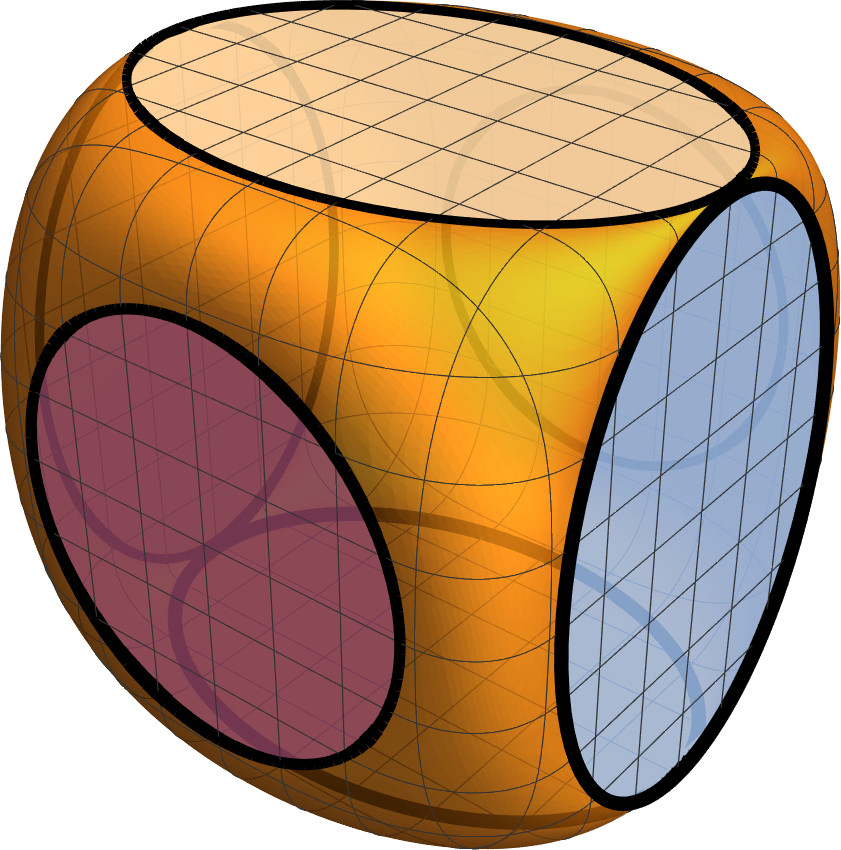}
\caption{Examples of joint numerical ranges for triples of $4\times 4$ hermitian matrices with various configurations
of elliptic faces: (a) five faces, (b) six non-adjacent, (c) six
with two disjoint from the remaining four.}\label{fig:elliptic}
\end{figure}

One of joint numerical ranges forms a `dice' and contains six elliptic
faces, which are pairwise disjoint (see Figure~\ref{fig:elliptic}~(b)), and is obtained for the following matrices:
\begin{equation*}\frac{1}{\sqrt{2}}
\begin{bmatrix}
 1 & 1 & 0 & 0 \\
 1 & -1 & 0 & 0 \\
 0 & 0 & -1 & 1 \\
 0 & 0 & 1 & 1 \end{bmatrix},
 \begin{bmatrix}
 0 & -1 & 0 & 0 \\
 -1 & 0 & 0 & 0 \\
 0 & 0 & 0 & 1 \\
 0 & 0 & 1 & 0 \end{bmatrix},\begin{bmatrix}
 0 & 0 & 0 & -1 \\
 0 & 0 & \ii & 0 \\
 0 & -\ii & 0 & 0 \\
 -1 & 0 & 0 & 0 \end{bmatrix}.
 \end{equation*}

A joint numerical range with six elliptic faces, four of them intersecting
in a ring pattern, and two disjoint (see Figure~\ref{fig:elliptic}~(c)), corresponds to the following triple
of symmetric matrices:
\begin{equation*}\hspace{-10mm}\frac{1}{\sqrt{2}}\begin{bmatrix}
 1 & 1 & 0 & 0 \\
 1 & -1 & 0 & 0 \\
 0 & 0 & -1 & 1 \\
 0 & 0 & 1 & 1 \end{bmatrix},\begin{bmatrix}
 0 & -1 & 0 & 0 \\
 -1 & 0 & 0 & 0 \\
 0 & 0 & 0 & 1 \\
 0 & 0 & 1 & 0 \end{bmatrix},\begin{bmatrix}
 0 & 0 & 0 & -1 \\
 0 & 0 & 1 & 0 \\
 0 & 1 & 0 & 0 \\
 -1 & 0 & 0 & 0 \end{bmatrix} .
 \end{equation*}
%
%
\section{Separable numerical range}
\label{sec:restricted-JNRs}

Some applications in quantum physics rely of the notion of \emph{restricted numerical range},
in which the set of states $\cD_A$ is replaced with a specific subset \cite{GPM+10}.
In the case of systems composed of two parts -- e.g., two qubits -- this subset might
correspond to states which are in some sense classical, and the numerical range restricted
to such states highlights the difference between classical and quantum behaviors of
quantum systems.

Let us start with the mathematical description of the physical scenario. If the physical 
operations and observable quantities of the first system are described by the set $\M_d$ 
of matrices of size d (forming a *-algebra on $\mathbb{C}^d$), and the operations and 
observables on the second subsystem by $\M_{d'}$ (analogously, a *-algebra on 
$\mathbb{C}^{d'}$), the total physical bipartite (two-party) system corresponds to the 
tensor product of algebras.
As in Section~\ref{sec:recap} above, we denote the set of density matrices of size $d$ by 
$\cD_d$. With this notation, the density matrices of the entire system can be denoted 
by  $\cD_{d\times d'}$: they are positive semidefinite, unit trace matrices of order 
$d\cdot d'$.
Here, we mostly concentrate on two-qubit systems, as this matches the topic of this paper: 
two qubits correspond to the state space $\cD_4$ needed in the definition of joint numerical 
ranges for matrices of size four. 

One of the most interesting features of such a description is that some states give 
rise to correlations that can not be explained by classical probability theory. In some sense, the "classical" nature of some states is captured by the fact that they can be decomposed into a convex sum of tensor products of one-subsystem states -- such states are called \emph{separable} \cite{Horodeccy1996}:
\begin{equation}\label{eq:sepstates}
\cD^{\mathrm{sep}}_{d\times d'} = \operatorname{conv}\{\  \rho^A \otimes \rho^B : \rho^A \in \cD_d ,\rho^B\in\cD_{d'}\}.
\end{equation}

Here we study the \emph{separable states of two qubits} $\cD^{\mathrm{sep}}_{2\times 2}$, as this is a proper subset of $\cD_4$, projections of which are the main topic of this article. By analogy to the definition of joint numerical range (see \eqref{eq:JNR-A}), we define the \emph{separable numerical range} \cite{GPM+10}
\begin{equation}\label{eq:sepnrange}
W^{\mathrm{sep}}(A_1,\ldots, A_k)=w(\cD^{\mathrm{sep}}_{d\times d'})=\{(\tr \rho A_1, \ldots, \tr \rho A_k) : \rho\in\cD^{\mathrm{sep}}_{d\times d'}\}.
\end{equation}
The object defined in such a way (as well as the related product numerical range, where only
rank one product states are taken into account, leading to nonconvexity \cite{PGM+11}),
are important tools in quantum information theory, allowing for the study of
quantum entanglement \cite{SCSZ21}.
In practice, the tensor product, which is a central point to the study of entanglement,
is typically realized as the Kronecker product. Temporarily employing the physical 
notation of $\ket{0}=(1,0)\tp\in\mathbb{C}^2$ and
$\ket{1}=(0,1)\tp\in\mathbb{C}^2$, the Kronecker products
$\ket{ij}=\ket{i}\otimes\ket{j}$ of the two vectors take the following form:
\begin{equation*}
\ket{00}=\begin{pmatrix}1\\0\\0\\0\end{pmatrix},\quad \ket{01}=\begin{pmatrix}0\\1\\0\\0\end{pmatrix},\quad
\ket{10}=\begin{pmatrix}0\\0\\1\\0\end{pmatrix},
\text{ and~}\ket{11}=\begin{pmatrix}0\\0\\0\\1\end{pmatrix}.
\end{equation*}
Similarly, the tensor product appearing in \eqref{eq:sepstates} is here interpreted as
\begin{equation*}
    \begin{pmatrix}A_{11} & A_{12} \\ A_{21} & A_{22}\end{pmatrix}\otimes
    \overbrace{\begin{pmatrix}B_{11} & B_{12} \\ B_{21} & B_{22}\end{pmatrix}}^{B} = \begin{pmatrix}A_{11} B & A_{12} B \\ A_{21} B & A_{22}B\end{pmatrix}.
\end{equation*}
Physical motivation here is to characterize the difference between entangled
and separable states through the associated numerical ranges
-- with this in mind, we analyze the question:
how the shape of the boundary $\partial W$ of the standard joint numerical range
of three hermitian matrices of order four, investigated in previous sections,
influences the structure of the corresponding separable numerical range $W^{\mathrm{sep}}$.
     
The particular case of two-qubit system is 
not difficult to probe numerically through the use of semidefinite optimization due to the 
positive partial transpose (PPT) criterion \cite{Peres1996,Horodeccy1996}:
\begin{theorem}[{Peres-Horodeccy}]
\label{thm:pptcrit}
For $(d,d')=(2,2)$ and $(2,3)$, the states belonging to $\cD^{\mathrm{sep}}_{d,d'}$
are characterised by semidefinite positivity of their partial transpose:
\begin{equation*}
\rho\in\cD^{\mathrm{sep}}_{d\times d'} \Leftrightarrow \rho \in \cD_{d\times d'} \text{~and~}\rho^\Gamma \in \cD_{d\times d'},
\end{equation*}
where $\cdot^\Gamma$ denotes the partial transpose. For the matrix $X\in \M_{2\times d}$
of square blocks $\{Y,Z,U,V\}$ of size $d$
\begin{equation*}
X=\begin{pmatrix}Y & Z\\U&V\end{pmatrix},
\end{equation*}
the partial transpose is the transpose of each of the blocks:
\begin{equation*}
X^\Gamma=\begin{pmatrix}Y\tp & Z\tp\\U\tp&V\tp\end{pmatrix}
\end{equation*}
\end{theorem}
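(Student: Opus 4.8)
The plan is to establish the two implications separately. The implication ``separable $\Rightarrow$ PPT'' is elementary: if $\rho\in\cD^{\mathrm{sep}}_{d\times d'}$ is written as a convex combination $\rho=\sum_j\lambda_j\,\rho^A_j\otimes\rho^B_j$ as in \eqref{eq:sepstates}, then by linearity $\rho^\Gamma=\sum_j\lambda_j\,\rho^A_j\otimes(\rho^B_j)\tp$, and each term is positive semidefinite because the transpose preserves positive semidefiniteness; hence $\rho^\Gamma\geq 0$. This half needs no restriction on $(d,d')$.

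For the converse I would first record the Horodecki positive-map criterion: a state $\rho$ on $\C^d\otimes\C^{d'}$ is separable if and only if $(\mathrm{id}_d\otimes\Lambda)(\rho)\geq 0$ for every positive linear map $\Lambda\colon\M_{d'}\to\M_d$. The ``only if'' part is, again, the product case extended by convexity. For the ``if'' part I would run a Hahn--Banach separation argument: $\cD^{\mathrm{sep}}_{d\times d'}$ is compact and convex, so a state $\rho$ outside it is separated from it by a hermitian $W$ with $\tr(W\sigma)\geq 0$ for all $\sigma\in\cD^{\mathrm{sep}}_{d\times d'}$ and $\tr(W\rho)<0$; since the extreme points of $\cD^{\mathrm{sep}}_{d\times d'}$ are rank-one product states, $W$ is block-positive, and the Jamio\l{}kowski--Choi correspondence converts $W$ into a positive map $\Lambda$ for which $(\mathrm{id}_d\otimes\Lambda)(\rho)\not\geq 0$.

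The dimension-specific ingredient is the St{\o}rmer--Woronowicz theorem: for $(d,d')\in\{(2,2),(2,3)\}$ every positive linear map $\Lambda\colon\M_{d'}\to\M_d$ is \emph{decomposable}, i.e.\ $\Lambda=\Lambda_1+\Lambda_2\circ T$ with $\Lambda_1,\Lambda_2$ completely positive and $T\colon\M_{d'}\to\M_{d'}$ the transposition. Granting this, suppose $\rho,\rho^\Gamma\geq 0$ and let $\Lambda$ be any positive map; then
\begin{align*}
(\mathrm{id}_d\otimes\Lambda)(\rho)
&=(\mathrm{id}_d\otimes\Lambda_1)(\rho)+\bigl(\mathrm{id}_d\otimes(\Lambda_2\circ T)\bigr)(\rho)\\
&=(\mathrm{id}_d\otimes\Lambda_1)(\rho)+(\mathrm{id}_d\otimes\Lambda_2)\bigl((\mathrm{id}_d\otimes T)(\rho)\bigr)\\
&=(\mathrm{id}_d\otimes\Lambda_1)(\rho)+(\mathrm{id}_d\otimes\Lambda_2)(\rho^\Gamma),
\end{align*}
and both summands are positive semidefinite because $\Lambda_1,\Lambda_2$ are completely positive while $\rho,\rho^\Gamma\geq 0$. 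Thus $(\mathrm{id}_d\otimes\Lambda)(\rho)\geq 0$ for every positive $\Lambda$, so the Horodecki criterion gives $\rho\in\cD^{\mathrm{sep}}_{d\times d'}$, completing the proof.

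I expect the main obstacle to be the St{\o}rmer--Woronowicz decomposability theorem itself, which is genuinely confined to these low dimensions --- it is precisely what fails once $d\cdot d'\geq 8$, where the Choi and Woronowicz maps yield PPT entangled states --- so I would cite it rather than reprove it. A secondary technical point is the passage from the separating functional to a positive map, where one must check that non-negativity of $W$ on all product vectors is equivalent to positivity of the associated Jamio\l{}kowski--Choi map; this is standard but requires a short argument.
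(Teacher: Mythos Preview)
Your argument is the standard and correct route to the Peres--Horodecki criterion: the easy direction via linearity of the partial transpose, then Hahn--Banach separation to obtain an entanglement witness, the Jamio\l{}kowski--Choi isomorphism to convert it to a positive map, and finally the St{\o}rmer--Woronowicz decomposability theorem for $(d,d')\in\{(2,2),(2,3)\}$ to close the loop. There is nothing wrong with it.

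However, the paper does not prove this theorem at all. It is stated as Theorem~\ref{thm:pptcrit} with attribution to Peres and the Horodeckis (references \cite{Peres1996,Horodeccy1996}) and is simply invoked as a known result in order to justify that separability in the two-qubit case can be tested by a semidefinite program. So there is no ``paper's own proof'' to compare against: you have supplied a proof where the authors were content to cite one. Your sketch is faithful to the original literature, and your caveat that St{\o}rmer--Woronowicz is the non-trivial, dimension-specific ingredient is exactly right.
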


This condition can be used together with semidefinite optimization in order to determine
exposed faces of $W^{\mathrm{sep}}$, allowing for numerical study and visualisations of 
the set. In what follows, we consider two qubits, where $(d,d')=(2,2)$ and 
$A_1, A_2, A_3$ are hermitian matrices of order four.
\par
\begin{lemma}
Let $F$ be an exposed face of $W^{\mathrm{sep}}(A_1, A_2, A_3)$ minimizing the linear 
functional $W^{\mathrm{sep}}\ni p \mapsto \sum_{i=1}^3 s_i p_i$. The preimage of $F$ 
under the measure map given by \eqref{eq:measuremap} is characterized as the solutions 
to the following semidefinite optimization problem:
\begin{equation}
\begin{aligned}
\min_{\rho} \tr  \rho \left(\sum_{i=1}^k A_i s_i\right)\\
\text{subject to } \rho \geq 0,\ \rho^\Gamma 
\geq 0,\ \tr  \rho =1.
\label{eq:sepopt}
\end{aligned}
\end{equation}
\end{lemma}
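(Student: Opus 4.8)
The plan is to unpack the definitions of the two sides and observe that they describe exactly the same set.

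\begin{proof}
By definition, $F$ is the set of minimizers of the linear functional
$W^{\mathrm{sep}}\ni p\mapsto\sum_{i=1}^3 s_i p_i$, that is,
\[
F=\Bigl\{\,w(\rho)\ :\ \rho\in\cD^{\mathrm{sep}}_{2\times 2},\
\textstyle\sum_{i=1}^3 s_i\,\tr(\rho A_i)=\mu\,\Bigr\},
\qquad
\mu:=\min_{\sigma\in\cD^{\mathrm{sep}}_{2\times 2}}\textstyle\sum_{i=1}^3 s_i\,\tr(\sigma A_i).
\]
Since $w$ is the measure map of \eqref{eq:measuremap}, a density matrix
$\rho\in\cD_{2\times 2}$ lies in $w|_{\cD^{\mathrm{sep}}_{2\times 2}}^{-1}(F)$
if and only if $\rho\in\cD^{\mathrm{sep}}_{2\times 2}$ and
$\sum_{i=1}^3 s_i\,\tr(\rho A_i)=\tr\bigl(\rho\sum_{i=1}^3 s_iA_i\bigr)=\mu$.
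Now the Peres--Horodeccy criterion (Theorem~\ref{thm:pptcrit}) for $(d,d')=(2,2)$
states that $\rho\in\cD^{\mathrm{sep}}_{2\times 2}$ holds exactly when
$\rho\geq 0$, $\rho^\Gamma\geq 0$, and $\tr(\rho)=1$, which are precisely the
feasibility constraints of the semidefinite program \eqref{eq:sepopt}.
Consequently, the feasible set of \eqref{eq:sepopt} is $\cD^{\mathrm{sep}}_{2\times 2}$,
its optimal value equals $\mu$, and its set of optimal solutions is exactly the set
of $\rho\in\cD^{\mathrm{sep}}_{2\times 2}$ attaining the value $\mu$, i.e.\ the
preimage $w|_{\cD^{\mathrm{sep}}_{2\times 2}}^{-1}(F)$. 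This proves the claim.
\end{proof}

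\emph{Remark on the approach.} The only genuine input is the Peres--Horodeccy
characterization of two-qubit separability by positivity of the partial transpose;
everything else is bookkeeping, rewriting the linear functional
$\sum_i s_i p_i$ evaluated on $W^{\mathrm{sep}}$ as the trace functional
$\rho\mapsto\tr\bigl(\rho\sum_i s_iA_i\bigr)$ on $\cD^{\mathrm{sep}}_{2\times 2}$,
and noting that minimizers are preserved under this identification. There is no real
obstacle here; the statement is essentially a dictionary entry translating
``exposed face of $W^{\mathrm{sep}}$'' into ``optimal face of a semidefinite program'',
and the care needed is only to make sure the constraint set of \eqref{eq:sepopt}
is literally $\cD^{\mathrm{sep}}_{2\times 2}$ and not merely a superset of it,
which is exactly what Theorem~\ref{thm:pptcrit} guarantees in the two-qubit case.
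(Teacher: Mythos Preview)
Your proof is correct and follows essentially the same route as the paper's: both arguments invoke the Peres--Horodeccy criterion to identify the feasible set of \eqref{eq:sepopt} with $\cD^{\mathrm{sep}}_{2\times 2}$, and then observe that optimizers of the trace functional are exactly the separable states mapping into $F$. Your version is in fact more careful, making both directions of the characterization explicit, whereas the paper's proof sketches only one implication.
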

\begin{proof}
If a point $p\in W^{\mathrm{sep}}$, there must exist
a hermitian matrix $\rho$ of order four 
such that $w(\rho)=p$, and by Theorem \ref{thm:pptcrit} it must fulfill $\tr  \rho=1$, $\rho{\ge}0$ and $\rho^\Gamma{\ge}0$.
If $p \in F$, for any other $\sigma\in\cD^{\mathrm{sep}}_{2\times 2}$, $\sum_{i=1}^3 s_i \tr (\rho A_i)\le \sum_{i=1}^3 s_i \tr (\sigma A_i)$,
and therefore $\rho$ is a solution to the semidefinite optimization problem \eqref{eq:sepopt}. 
\end{proof}
\begin{figure}[t]
\centering
\begin{subfigure}[t]{.45\textwidth}
\centering
\includegraphics[width=.78\textwidth]{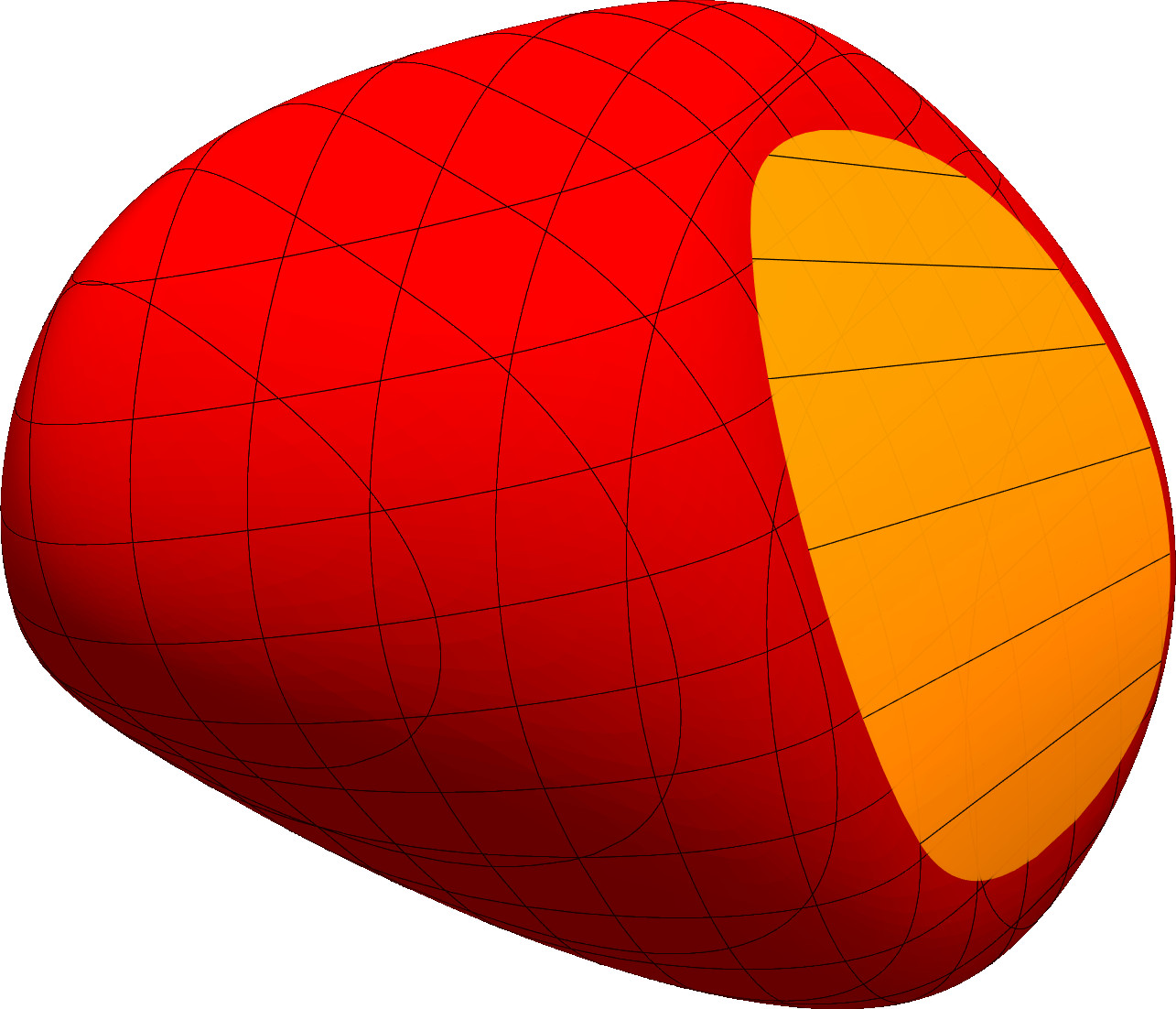}
\caption{}\label{fig:snrs-a}
\end{subfigure}
\hfill
\begin{subfigure}[t]{.45\textwidth}
\centering
\includegraphics[width=.78\textwidth]{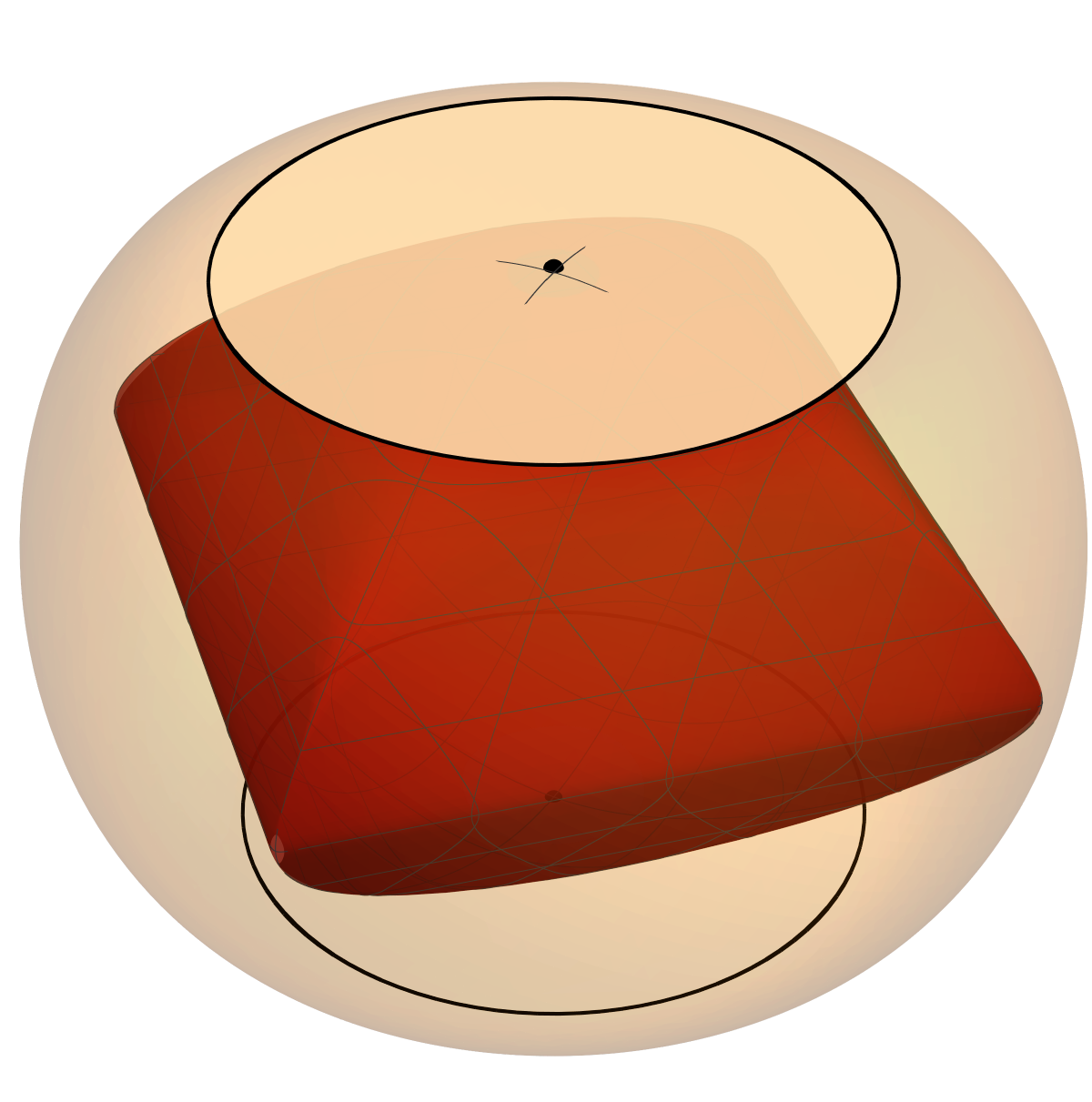}
\caption{}\label{fig:snrs-b}
\end{subfigure}
\caption{(a) Separable numerical range (red) for randomly sampled matrices 
\eqref{eq:randommat}; according to Remark \ref{rem:numerics}, the boundary contains 
a ruled surface (yellow region) some of whose lines are depicted.
(b) Joint and separable numerical ranges (transparent yellow and solid red, respectively)
for the matrices \eqref{eq:sepmat} illustrate Lemma~\ref{lem:facesnr}: 
the two circular faces of the joint numerical range (at the top and bottom) 
intersect the separable numerical range (black dots).}\label{fig:snrs}
\end{figure}

 For visualisation purposes it is sufficient to sample the functional parameters
$(n_1, n_2, n_3)$ from the unit sphere $S_2$, and for each numerically determine
an optimizer $\rho$ to the above minimization problem. If the optimization converges,
the point $w(\rho)$ lies at the boundary of the separable numerical range, within prescribed
optimization accuracy (which is well controlled for semidefinite problems \cite{invnonlin}).
Then, the convex hull of the sampled boundary points approximates
$W^{\mathrm{sep}}(A_1, A_2, A_3)$. 

To the best our knowledge, this approximation procedure is the only way of studying the
separable numerical ranges, if the matrices do not possess any additional structure.
The numerically probed geometry of three-dimensional $W^{\mathrm{sep}}$ always seems
to contain singular features\footnote{This is in contrast with two-dimensional
separable numerical ranges, which for generic matrices do not contain any boundary segments.
Note that due to the numerical approximation procedure, the decision
whether a given region of the boundary is flat is somewhat arbitrary;
here we take the ruled surfaces to be the regions with large segments
between the vertices of the convex hull, which persist despite subsampling.}. 

\begin{remark} [Numerical empirical finding]\label{rem:numerics}
For hermitian matrices $A_1, A_2, A_3$ sampled i.i.d.\ from the
Gaussian Unitary Ensemble of matrices of size $4$, the boundary of the separable
numerical range $W^{\mathrm{sep}}(A_1, A_2, A_3)$ always contains singular elements:
cusps, flat faces, or ruled surfaces. Right now it is only a numerical observation,
as we are not aware of any techniques to prove that singular features do exist in this case.
\end{remark}

We attribute this observation to possible construction of $W^{\mathrm{sep}}$
as a convex hull of the analogous product numerical range \cite{PGM+11},
\begin{equation*}
W^{\mathrm{sep}}=\operatorname{conv} W^{\otimes},
\text{ with } W^\otimes= w_{A_1, A_2, A_3}(
\{\rho\in\cD^{\mathrm{sep}} : \operatorname{rk} \rho = 1\}).
\end{equation*}
The set $W^{\otimes}$ appears to be nonconvex under the assumptions of the above Remark,
and ruled surfaces form in $W^{\mathrm{sep}}$ as a result of this.
 An example of the above observation is provided in Figure \ref{fig:snrs}a.
There, the ruled surface of the separable numerical range is highlighted;
the matrices are taken from the Gaussian Unitary Ensemble and approximated by
rational coefficients for reproducibility:
\begin{align}\label{eq:randommat}
&\begin{bmatrix}
 -3 & -9-6 \ii & -15-\ii & -2+13 \ii \\
 * & 6 & -2+5 \ii & -2+10 \ii \\
 * & * & -10 & -6-7 \ii \\
 * & * & * & 3 
\end{bmatrix},
\quad
\begin{bmatrix}
 11 & 9+2 \ii & 2-5 \ii & -1-2 \ii \\
 * & -9 & -4-3 \ii & 1-4 \ii \\
 * & * & -16 & 10-12 \ii \\
 * & * & * & -2 
\end{bmatrix},\\
\nonumber
&\begin{bmatrix}
 15 & 10-3 \ii & 5+5 \ii & -\ii \\
 * & -6 & -16-2 \ii & 15 \\
 * & * & 3 & -9-7 \ii \\
 * & * & * & -13 
\end{bmatrix}
\end{align}
Numerically, the ruled surfaces do not seem to be caused by the rational structure of 
coefficients. Their existence is a generic behavior. Practically, the matrices were 
generated in Mathematica 14.1 with the following code.
{\small\begin{align*}
&\texttt{SeedRandom[2137];}\\    
& \texttt{10 Round[ RandomVariate[GaussianUnitaryMatrixDistribution[4], 3], 1/10]}
\end{align*}}
\par
The separable numerical range must lie within the joint numerical range, as separable states 
form a subset of all states. The two ranges can be tangent to each other, and this is known 
to happen if the joint numerical range has a face that is not a point.
\par
\begin{lemma}\label{lem:facesnr}
Every rank-$2$ and rank-$3$ face of the joint numerical range $W(A_1, A_2, A_3)$ 
intersects the separable numerical range $W^{\mathrm{sep}}(A_1, A_2, A_3)$.
In particular, every face of $W(A_1, A_2, A_3)$ that is larger than a singleton 
intersects $W^{\mathrm{sep}}(A_1, A_2, A_3)$.
\end{lemma}
\begin{proof}
Every rank-$2$ or rank-$3$ face is an image (under $w$) of a subset $S$ of $\cD_4$ 
isomorphic with $\cD_2$ or $\cD_3$, respectively, by Theorem~\ref{thm:PF-iso}.
In particular, his is true for every face that is larger than a singleton.
\par
This set $S$ is the convex hull of the rank-one projectors onto vectors belonging 
to a 2- or 3-dimensional subspace of $\mathbb{C}^4$. It is known that any 
2-dimensional subspace of $\mathbb{C}^4$ contains a pure tensor $v_A\otimes v_B$ 
(see \cite[Theorem~2]{Sanpera-etal1998}). So the projector $\Pi$ onto $v_A\otimes v_B$ 
is contained in $S\cap\cD^{\mathrm{sep}}_{2\times 2}$ and $w(v_A\otimes v_B)$ is 
contained in $W^{\mathrm{sep}}(A_1,A_2,A_3)$.
\end{proof}
Lemma~\ref{lem:facesnr} is illustrated in Figure \ref{fig:snrs}b
for the following matrices.
\begin{align}\label{eq:sepmat}
\begin{bmatrix}
 0 & 1 & 0 & 1 \\
 * & 0 & 1 & 0 \\
 * & * & 0 & 0 \\
 * & * & * & 0
\end{bmatrix},
\quad
\begin{bmatrix}
 0 & -\ii & 0 & \ii \\
 * & 0 & -\ii & 0 \\
 * & * & 0 & 0 \\
 * & * & * & 0 
\end{bmatrix},
\quad
\diag(-1,1,1,-1)
\end{align}
%
%
\subsection*{Acknowledgments}
SW thanks Cynthia Vinzant for helpful discussions on convex duality and on the results of 
\cite{Ottem-etal2015}. KS gratefully acknowledges funding from the projects DeQHOST APVV-22-0570, QUAS
VEGA 2/0164/25, Postdokgrant APD0161, and Stefan Schwarz programme.
SW was supported by the European Union under the project ROBOPROX 
(reg.\ no.\ CZ.02.01.01/00/22\_008/0004590),
while K{\.Z} acknowledges support 
under ERC Advanced Grant Tatypic, Project No. 101142236.
%
%

\end{document}